\documentclass[preprint,11pt]{elsarticle}
\usepackage{amssymb}
\usepackage{graphicx}
\usepackage{epsfig}
\usepackage{color}
\usepackage{subfigure}
\usepackage{subfigmat}
\usepackage{amsthm}
\usepackage{latexsym}
\usepackage{amsmath}
\usepackage{rotating}
\usepackage{amsbsy}
\usepackage{mathrsfs}
\usepackage{lineno}
\usepackage{hyperref}
\usepackage{booktabs} 
\usepackage{float}     
\usepackage{caption}
\usepackage{subcaption}
\newtheorem{thm}{Theorem}[section]
\newtheorem{example}{Example}[section]
\newtheorem{prop}{Proposition}[section]

\newtheorem{rem}{Remark}[section]

\newtheorem*{LemmaA.1}{Lemma A.1}
\newtheorem*{LemmaA.2}{Lemma A.2}
\numberwithin{equation}{section}

\journal{}

\begin{document}

\begin{frontmatter}

\title{$p$ and $rp$-Spectral Element Methods for Two-dimensional Elliptic Boundary Layer Problems}
\author[label1]{Sonia}
\ead{sonia2400861@st.jmi.ac.in}
\author[label1]{Akhlaq~Husain\corref{cor1}}
\ead{ahusain10@jmi.ac.in}
\author[label1]{Arshad~Khan}
\ead{akhan2@jmi.ac.in}
\address[label1]{Department of Mathematics, Faculty of Sciences, Jamia Millia Islamia, New Delhi-110025, Delhi, India}
\cortext[cor1]{Corresponding author}

\begin{abstract}
We propose $p$ and $rp$-spectral element methods for elliptic boundary layer problems on two dimensional rectangular domains. To resolve boundary layers, we use $p$-version with a fixed mesh and an $rp$-version with a boundary layer mesh consisting of thin needle-like elements near the boundary layer and coarse elements away from the layer. Stability estimates are derived using non-conforming spectral element functions. The numerical scheme for both the methods is based on minimising a least-squares functional in appropriate Sobolev norms. We construct robust preconditioners to manage the condition number of the normal equations derived from the least-squares formulation. The method is able to approximate boundary layers at a rate $O\left(\frac{\log W}{W^2}\right)$ for the $p$-version and at the rate $O({\epsilon}\alpha^{2W})$, uniformly in $\epsilon$, for the $rp$-version, where $0<\epsilon\leq1$ is the boundary layer parameter, $\alpha<1$ is a constant and $W$ denotes the degree of the approximating polynomial. Numerical results are provided for model elliptic boundary layer problems for a range of boundary layer parameters. Simulation results demonstrate the efficiency and robustness of the method in capturing boundary layers on rectangular domains.
\end{abstract}

\begin{keyword}

Elliptic problems \sep boundary layers \sep spectral element method \sep Least-squares \sep preconditioner \sep exponential accuracy.

\end{keyword}

\end{frontmatter}

\section{Introduction}\label{sec1}

Elliptic boundary layer problems arise from a class of singular perturbation problems (see~\cite{Eckhaus1972, M2} and the references therein). These problems involve an elliptic differential equation in which the highest-order derivative is scaled by a small positive parameter. As a result, sharp gradients develop in confined regions of the domain, referred to as boundary layers. This causes the solution to develop sharp gradients in narrow regions, known as boundary layers, which are supported in a thin neighbourhood along the boundary of the domain. These problems primarily arise in the modelling of several physical phenomena, particularly in semiconductor device modelling, plate and shell theories in structural mechanics, and in the Navier–Stokes equations under low-viscosity flow, where small parameters lead to rapid variations in the solution near the boundary. These features pose significant challenges for standard numerical methods~\cite{roos2008,DMS}. In particular, conventional finite element and spectral methods often fail to resolve these layers accurately without proper mesh refinement near the boundary of the domain~\cite{Deville2002}.

In some rare scenarios, the numerical methods may not even converge. The numerical approximation of these problems requires careful construction in order to accurately capture the boundary layer behaviour. Designing a numerical method for these problems requires both precision and computational efficiency, which remains a significant challenge.

For the effective resolution of boundary layers, a large amount of work has been carried out using various existing methods, such as finite difference or lower-order finite element methods, in combination with several mesh refinement techniques based on the $h$, $p$, and $hp$-versions (such as the Shishkin~\cite{SHISH} or Bakhvalov~\cite{BAKH} meshes; see~\cite{ROO1} for a detailed overview), or through the use of high-order $p$ and $hp$-versions on appropriately designed meshes (see~\cite{SCHW1,SCHW2,XENO1,XENO2} and the references therein). In addition, $rp$-version methods, where the number of elements is fixed and the element sizes vary with the polynomial degree, provide an effective framework for resolving boundary layers~\cite{SCHW1}.

To develop a robust numerical technique for effectively resolving boundary layers, the spectral element method has emerged as a highly effective approach due to its high-order accuracy~\cite{SSX1}. It has been extensively used to solve partial differential equations over the past two to three decades (refer to~\cite{CHQ1,KS,ST} and the related references). The combination of the high accuracy of spectral methods and the flexibility of finite element methods makes least-squares spectral element methods an especially effective technique.
These methods have been widely applied to compute approximate solutions of elliptic problems on both smooth and non-smooth domains~\cite{DHMU1,DHMU2,DHMU3,DKU,DTK2,AKCSU,KR,KR1}, as well as to various physical problems, such as the Navier–Stokes equations~\cite{PR,PG}. Furthermore, they provide considerable mathematical and computational benefits, especially in the structural design and algorithmic execution of the least-squares framework (see~\cite{BOCH,DHMU1,DHMU2,DHMU3,DKU,DTK2,AKCSU,KR}). In the spectral element approach, the solution is approximated using high-order spectral elements.

In this article, we propose $p$ and $rp$-least-squares spectral element methods for elliptic boundary layer problems on two-dimensional rectangular domains with Dirichlet boundary conditions. The method is based on minimizing the sum of the squares of residuals in the partial differential equation, the sum of the squared norms of residuals in the boundary conditions in fractional Sobolev norms, and the sum of the squared norm of jumps in the function and its derivatives across the interface in relevant fractional Sobolev norms to enforce the continuity along the inter-element boundary.

To compute the solution, we solve normal equations using a preconditioned conjugate gradient method. For computing the residuals, the integrals are efficiently approximated using Gauss–quadrature rules. If the input data is analytic and $W$ denotes an upper bound on the degree of the approximating polynomials, then for the $p$-version, the proposed method is able to resolve boundary layers at a robust rate ${O}\left( \frac{\log W}{W^2} \right)$, and for the $rp$-version the convergence rate is uniform in $\epsilon$, equalling ${O}(\epsilon^{\alpha} W)$ on a boundary layer mesh, where $0 < \epsilon \leq 1$ is the boundary layer parameter and $\alpha < 1$ is a constant.

The paper is organized as follows: In Section~2, presents the elliptic boundary layer problem along with essential notations and function spaces. Section~3 presents domain discretization and stability estimates. Design of numerical scheme with the help of stability estimates are presented in Section~4. To deal with normal equations arising from the numerical scheme, preconditioning techniques are discussed in Section~5. Error estimates are calculated in Section~6. Numerical results are detailed in Section 7, and final conclusions are provided in Section 8.

\section{Elliptic boundary layer problem}\label{sec2}

\subsection{Notations and preliminaries}
We denote an arbitrary element of $\mathbb{R}^2$ by $\mathbf{x}=(x_{1},x_{2})$ or $\mathbf{x}=(x,y)$, $|\mathbf{x}|=\sqrt{x^2+y^2}$.
Let $\Omega\subset\mathbb{R}^2$ be open and bounded, with boundary \(\partial\Omega\).  
The standard Sobolev space of order $m\in\mathbb{N}$, denoted by $H^{m}(\Omega)$, is equipped with the norm
\begin{align}\label{eq2.1}
\|u\|_{m,\Omega}^{2}=\sum_{|\alpha|\leq m}\|\partial^\alpha u\|^2_{0,\Omega}=\int_{\Omega} \sum_{|\alpha|\leq m}|\partial^{\alpha} u|^{2}d{\mathbf{x}},
\end{align}
where $\alpha=(\alpha_1,\alpha_2)$, $\alpha_i\geq 0$ are integers for $i=1,2$, $|\alpha|=\alpha_1+\alpha_2$. The derivative
\[\partial^{\alpha}u=\frac{\partial^{|\alpha|}u}{\partial x_1^{\alpha_1}\partial x_2^{\alpha_2}}\]
is understood in the distributional (weak) sense. \\ As usual $H^0(\Omega)=L^2(\Omega)$.
The subspace of functions with zero trace on $\partial\Omega$ is 
\begin{align*}
H^m_{0}(\Omega)=\{u\in H^{m}(\Omega): u=0 \:\mbox{on}\: \partial\Omega\},
\end{align*}
and the space of functions with non-zero trace on the boundary is
\begin{align*}
H^m_{D}(\Omega) =\{u\in H^{m}(\Omega): u=u|_{\partial\Omega} \:\mbox{on}\: \partial\Omega\},
\end{align*}
where $u|_{\partial\Omega}$ denotes the trace of $u$ on $\partial\Omega$.\\
A seminorm on $H^{m}(\Omega)$ is given by
\begin{align}\label{eq2.2}
|u|_{m,\Omega}^{2}=\sum_{|\alpha|=m}\|\partial^\alpha u\|^2_{0,\Omega}=\int_{\Omega} \sum_{|\alpha|=m}|\partial^{\alpha} u|^{2}d{\mathbf{x}}.
\end{align}
Further, the fractional Sobolev norm of order $\sigma$ for $0 < \sigma < 1$ as
\begin{align}\label{eq2.3}
\|u\|_{\sigma,I}^{2} = \|u\|_{0,I}^{2} + \int_I\int_I\frac{\left(u(x_{1}) - u(x_{2})\right)^2}{\left|x_{1} - x_{2}\right|^{1 + 2\sigma}}\,dx_{1} \,dx_{2},
\end{align}
where $I$ denotes an interval in $\mathbb{R}$. Moreover,
\begin{align}\label{eq2.4}
\|u\|_{1+\sigma,I}^2 = \|u\|_{0,I}^2+\left\|\:\frac{\partial u}{\partial x}\:\right\|_{\sigma,I}^2.
\end{align}
For the parameter $\epsilon > 0 $, we define the parameter-weighted norm
\begin{align}\label{eq2.5}
\|u\|^2_{m,\epsilon,\Omega}=\sum_{|\alpha|\leq m}\epsilon^{2|\alpha|}\|\partial^\alpha u\|^2_{0,\Omega}=\int_{\Omega}\sum_{|\alpha|\leq m}\epsilon^{2|\alpha|}|\partial^{\alpha}u|^{2}d{\mathbf{x}},
\end{align}
The associated function space is 
\begin{align*}
H^m_{\epsilon}(\Omega) = \left\{ u \in H^m(\Omega) \;:\; \|u\|_{m,\epsilon,\Omega} < \infty \right\}.
\end{align*}
If \(\varphi \in H^{m-\frac12}(\partial\Omega)\) is a prescribed boundary function, we define the space
\begin{align*}
H^m_{\epsilon,D}(\Omega) = \left\{ u \in H^m_{\epsilon}(\Omega) \;:\; u = \varphi \ \text{on} \ \partial\Omega \right\}.
\end{align*}

\subsection{The model problem}
Let $\Omega = (-1,1)^2 \subset \mathbb{R}^2$ be the rectangular domain shown in Figure~\ref{fig1}, with boundary $\Gamma = \partial\Omega$.
\begin{figure}[ht]
\centering
{\includegraphics[width=0.26\textwidth]{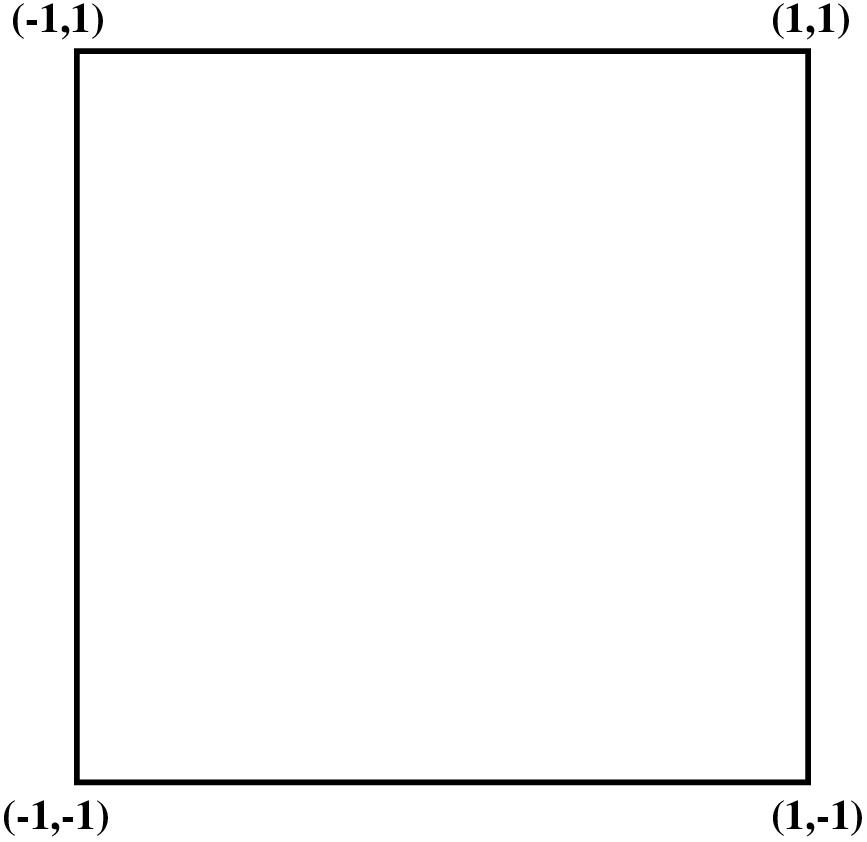}}
\caption{The domain $\Omega=(-1,1)^2$}
\label{fig1}
\end{figure}
We consider the elliptic boundary layer problem
\begin{align}\label{eq2.6}
-\epsilon^2\Delta w+aw=f\quad\mbox{in}\quad\Omega,
\end{align}
with Dirichlet boundary conditions
\begin{align}\label{eq2.7}
w=g\quad\mbox{on}\quad\Gamma.
\end{align}
Here, $f\in L^2(\Omega), g\in H^{\frac{3}{2}}(\Gamma)$, $a > 0 $ is a real constant and $0<\epsilon\leq 1$ is a small parameter.
The problem (\ref{eq2.6})–(\ref{eq2.7}) is well posed. Because the domain $\Omega=(-1,1)^2$ has corners, its boundary $\Gamma$ is
only Lipschitz continuous. It is well known~\cite{M2,MS1,XENO1,XENO2} that the solution $w$ exhibits regular boundary layers along
the four smooth edges of the domain, as well as corner layers at the vertices. Along any single smooth edge (away from the corners),
the regular boundary layer takes the generic one dimensional-like form
\begin{align}\label{eq2.8}
w_{bl}(\rho,\theta)=\phi(\theta)\exp\left(\frac{-\sqrt{a}\rho}{\epsilon}\right),
\end{align}
where $\rho$ is the normal distance to the edge, $\theta$ is the local coordinate along that edge, and $\phi(\theta)$ is a smooth function
defined strictly on the open interval of the edge. For instance, in Cartesian coordinates along the bottom edge ($y=-1$), this regular edge
layer takes the form
\begin{align}\label{eq2.9}
w_{bl}(x,y)=\psi(x)w_\epsilon(y)=\psi(x)e^{-\frac{(y+1)}{\epsilon}},
\end{align}
where $\psi(x)$ is smooth (analytic) for $x \in (-1,1)$ and $w_\epsilon(y) = e^{-\frac{(y+1)}{\epsilon}}$ represents the one-dimensional
exponential decay. As our primary goal is to derive parameter-uniform error estimates for capturing these steep gradients, our theoretical
analysis will focus on resolving this dominant one-dimensional exponential decay in the normal direction, modeled by (\ref{eq2.8})–(\ref{eq2.9}).

Therefore, in order to design an efficient numerical scheme we need to approximate the solution efficiently in the $\rho$
direction in the space of spectral element functions. Using techniques and results for the 1D problem~\cite{HUS2,HUS3} we
will construct spectral element spaces which are the tensor product of 1D spectral element spaces to approximate functions
of the form (\ref{eq2.8})$-$(\ref{eq2.9}) similar to the FEM analysis in~\cite{M2,MS1,XENO1,XENO2}. Moreover, we shall
consider higher order $p$ and $rp$ methods to resolve the boundary layers at a sufficiently fast rate.

The following regularity estimate~\cite{XENO1,XENO2} holds.
\begin{thm}[Regularity]\label{thm1}
If $f\in L^{2}(\Omega)$ and $g\in H^{3/2}(\Gamma)$, then there is a unique solution $w\in H^2_{\epsilon,D}(\Omega)$
to the problem (\ref{eq2.6})-(\ref{eq2.7}) and
\begin{equation}\label{eq2.10}
\|w\|_{2,\epsilon,\Omega} \leq C(\epsilon)\left(\|f\|_{0,\Omega}+\|g\|_{\frac{3}{2},\Gamma}\right),
\end{equation}
where the constant $C(\epsilon)$ depends on $\epsilon$.
\end{thm}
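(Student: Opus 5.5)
The plan is the standard one: reduce to homogeneous boundary data with a lifting, get existence and uniqueness from Lax--Milgram, and then invoke $H^2$ regularity on a convex domain. Since $C(\epsilon)$ is allowed to depend on $\epsilon$, no uniformity in $\epsilon$ has to be tracked, and the $\epsilon$-weighted norm $\|\cdot\|_{2,\epsilon,\Omega}$ is equivalent to $\|\cdot\|_{2,\Omega}$ with constants depending only on $\epsilon$. It therefore suffices to prove $w\in H^2(\Omega)$ with $\|w\|_{2,\Omega}\le C(\|f\|_{0,\Omega}+\|g\|_{\frac32,\Gamma})$.

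First I would fix a lifting. Because $\Omega=(-1,1)^2$ is a square, $g\in H^{3/2}(\Gamma)$ has to be understood edgewise together with the compatibility conditions at the four vertices; this is the setting of~\cite{XENO1,XENO2}. Under these conditions there is a bounded right inverse of the trace, giving $G\in H^2(\Omega)$ with $G|_\Gamma=g$ and $\|G\|_{2,\Omega}\le C\|g\|_{\frac32,\Gamma}$. Set $w=G+u$. Then $u\in H^1_0(\Omega)$ must solve
\[
-\epsilon^2\Delta u+au=\tilde f:=f+\epsilon^2\Delta G-aG ,
\]
and $\|\tilde f\|_{0,\Omega}\le C(\|f\|_{0,\Omega}+\|g\|_{\frac32,\Gamma})$.

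Next, the bilinear form $B(u,v)=\epsilon^2(\nabla u,\nabla v)+a(u,v)$ is bounded on $H^1_0(\Omega)$. It is also coercive, with $B(u,u)\ge\min(\epsilon^2,a)\|u\|_{1,\Omega}^2$, since $a>0$ is constant. Lax--Milgram then gives a unique weak solution $u$ with $\|u\|_{1,\Omega}\le \min(\epsilon^2,a)^{-1}\|\tilde f\|_{0,\Omega}$. Uniqueness of $w$ follows because the difference of two solutions lies in $H^1_0(\Omega)$ and solves the homogeneous problem.

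For $H^2$ regularity, $u$ solves $-\Delta u=\epsilon^{-2}(\tilde f-au)\in L^2(\Omega)$ with $u\in H^1_0(\Omega)$. Since $\Omega$ is a convex polygon, Grisvard's regularity theorem for the Dirichlet Laplacian on convex domains applies: all interior angles equal $\pi/2<\pi$, so no corner singular functions appear. It gives $u\in H^2(\Omega)$ and
\[
\|u\|_{2,\Omega}\le C\epsilon^{-2}\left(\|\tilde f\|_{0,\Omega}+a\|u\|_{0,\Omega}\right).
\]
Combining this with the $H^1$ bound and with $\|w\|_{2,\epsilon,\Omega}\le \|w\|_{2,\Omega}\le\|G\|_{2,\Omega}+\|u\|_{2,\Omega}$ (valid because $\epsilon\le1$) yields \eqref{eq2.10}, with $C(\epsilon)=O(\epsilon^{-4})$.

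The main obstacle is the lifting step. On a domain with corners, $H^{3/2}(\Gamma)$ is not simply the trace space of $H^2(\Omega)$: edgewise $H^{3/2}$ data must also satisfy vertex compatibility conditions, namely continuity at the vertices and an integral matching condition on the tangential derivatives. I would first state the hypothesis in that form, citing Grisvard and~\cite{XENO1,XENO2}. The convex-polygon $H^2$ estimate is quoted rather than reproved.
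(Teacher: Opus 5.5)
Your proof is correct, and it follows a genuinely different route from the paper, which gives no proof of this theorem at all. The paper only attributes the result to \cite{XENO1,XENO2}, whose focus is analytic data and $\epsilon$-explicit descriptions of $w$: smooth part, edge and corner layers, and remainder.

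\textbf{Comparison of the two routes.} Your argument is self-contained and uses only convexity. It has three steps: lifting of $g$, Lax--Milgram on $H^1_0(\Omega)$, and the convex-domain $H^2$ estimate for the Dirichlet Laplacian. On the square one even has $|u|_{2,\Omega}\le\|\Delta u\|_{0,\Omega}$ for $u\in H^2(\Omega)\cap H^1_0(\Omega)$. The argument works under exactly the stated hypotheses $f\in L^2(\Omega)$, $g\in H^{3/2}(\Gamma)$, and it exposes the corner hypothesis on $g$ that the paper's statement leaves implicit. What the cited route provides, and yours cannot, is the fine $\epsilon$-explicit layer structure of $w$ behind the model function $\phi(x)e^{-(y+1)/\epsilon}$ used in Section~\ref{sec6}.

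\textbf{Your route gives an $\epsilon$-uniform constant.} Keep the weights instead of passing to $\|\cdot\|_{2,\Omega}$. Testing with $u$ gives $a\|u\|_{0,\Omega}\le\|\tilde f\|_{0,\Omega}$ and $\epsilon^2|u|_{1,\Omega}^2\le a^{-1}\|\tilde f\|_{0,\Omega}^2$. Then $\epsilon^2|u|_{2,\Omega}\le\|\tilde f-au\|_{0,\Omega}\le 2\|\tilde f\|_{0,\Omega}$. Moreover $G$ does not depend on $\epsilon$, and $\|\tilde f\|_{0,\Omega}\le\|f\|_{0,\Omega}+C\|G\|_{2,\Omega}$ with $C$ independent of $\epsilon$. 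Together these prove (\ref{eq2.10}) with a constant depending only on $a$ and the lifting, i.e.\ uniformly in $\epsilon\in(0,1]$. Your $C(\epsilon)=O(\epsilon^{-4})$ is still valid for the statement as posed; this is just a sharper conclusion from the same argument.

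\textbf{A correction to the lifting step.} On the square, with only Dirichlet data prescribed, vertex compatibility reduces to continuity of $g$ at the four corners. The integral ($d\sigma/\sigma$) conditions in Grisvard's $H^2$ trace theorem match the gradients reconstructed on the two edges meeting at a vertex. At a right angle they pair the tangential derivative of $g$ on one edge with the normal derivative on the adjacent edge. The normal derivatives are not prescribed, so they can always be chosen (e.g.\ by reflection near the corner) to satisfy these conditions. You should therefore not add an integral matching condition on the tangential derivatives of $g$; that would only shrink the admissible data. Alternatively, read $\|g\|_{\frac32,\Gamma}$ as the quotient norm $\inf\{\|G\|_{2,\Omega}: G|_\Gamma=g\}$, and the lifting is immediate.
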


\begin{rem}
Note that the regularity estimate in (\ref{eq2.10}) are non-uniform in $\epsilon$ since the constant $C(\epsilon)$ depends on $\epsilon$.
\end{rem}

The solution $w(x)$ to the problem (\ref{eq2.6})$-$(\ref{eq2.7}) can be decomposed into a regular (smooth) part, a boundary layer part and a smooth remainder using a set of boundary fitted coordinates~\cite{XENO1,XENO2}.

\section{Domain Discretization and stability estimate}\label{sec3}

\subsection{Discretization and spectral element representation}
We decompose the domain $\Omega$ into a collection of quadrilaterals (say) $\Omega_{1},\Omega_{2},\ldots,\Omega_{N}$ (rectangles or squares) using a uniform (or non-uniform) refinement in each coordinate direction ensuring the sub-domain partitions
coincide with the the inter element boundaries (see Figure~\ref{fig2}). Since a triangle can be partitioned into three quadrilaterals by connecting the centroid of the triangle to the midpoints of the sides therefore, we may conveniently choose quadrilateral elements throughout the domain. Let each element uses an approximating polynomial of degree at most $W$. We shall define the space of
spectral element functions on $\Omega$ to be the space consisting of  functions formed by tensor-products of polynomials of degree $W$ in
each direction on $\Omega_l$, $1\leq l\leq N$. 
\begin{figure}[ht!]
\centering
\includegraphics[width=0.30\textwidth]{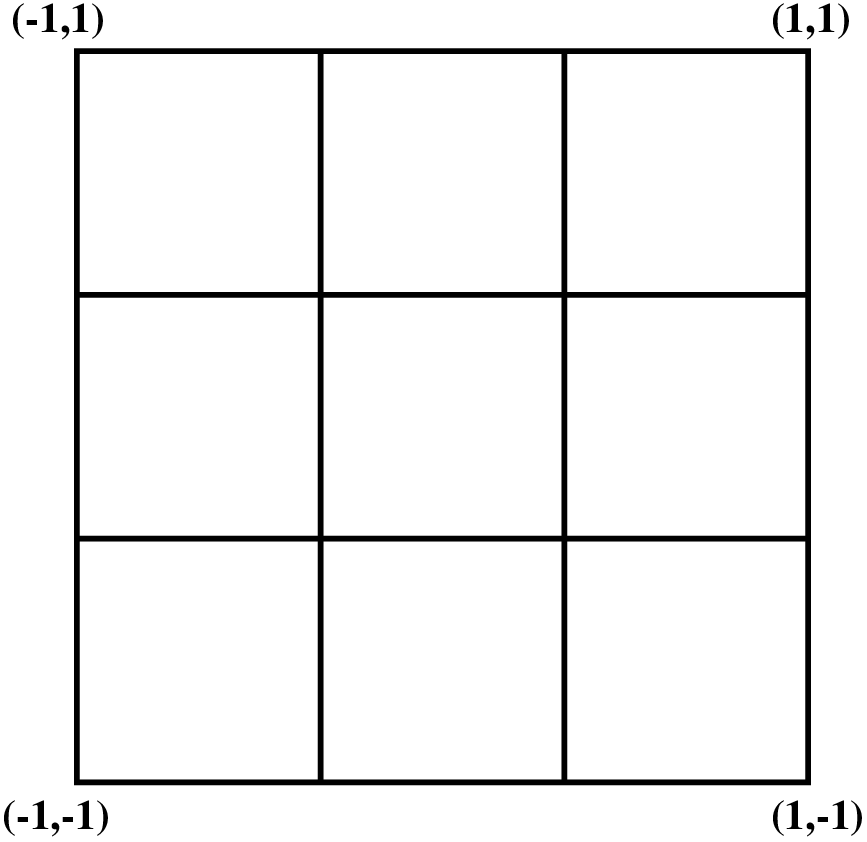}
\caption{Domain $\Omega$ discretized into elements $\Omega_l$ for $1 \leq l \leq N$.}
\label{fig2}
 \end{figure}
Let $S=(-1,1)^2$, denote the bi-unit square. Then there is an analytic map $M_{l}:S\rightarrow\Omega_{l}$
having an analytic inverse. The map $M_{l}$ is defined by (see~\cite{BG4})
\begin{align}\label{eq3.1}
x_1=X_{l}(\lambda),\;\;\mbox{and}\;\;x_2=Y_{l}(\lambda),\:\: \lambda=(\xi,\eta)\in S,\:\: l=1,2,\cdots,N.
\end{align}
We now introduce the spectral element functions $\{\hat{u}_{l}\}$ on the reference element $S$, defined as tensor products of degree $W$ polynomials in $\xi$ and $\eta$ variables, given by:

\begin{align}\label{eq3.2}
\hat{u}_{l}(\lambda)=\hat{u}_{l}(\xi,\eta)=\sum_{r=0}^{W}\sum_{s=0}^{W}\alpha_{r,s} \xi^{r}\eta^{s}.
\end{align}
Then, the spectral element functions $\{u_{l}\}$ on $\Omega_{l}$ are given by
\begin{align}\label{eq3.3}
u_{l}(x)=u_{l}(x_1,x_2)= \hat{u}_{l}(M_{l}^{-1}).
\end{align}

\subsection{Stability estimate}
Let $Lu=-\epsilon^2\Delta u+au$ and let $J^{l}(\lambda)$ be the Jacobian of the map $M_{l}:S\rightarrow\Omega_{l}$ for $l=1,2,\cdots,N$. Then,
\begin{align}\label{eq3.4}
\int_{\Omega_{l}} |Lu_{l}|^{2} d{\mathbf{x}}=\int_{S}|L\hat{u}_{l}|^{2}|J_{l}|\, d\xi d\eta.
\end{align}
where,
\[L\hat{u}_{l}=-\epsilon^2\left(\hat{u}_{\xi\xi}+\hat{u}_{\eta\eta}\right)+\hat{a}\hat{u}, \:\: \hat{a}={a}(\xi,\eta).\]
Define ${L}_{l}=L\sqrt{J_l}$. Then,
\begin{align}\label{eq3.5}
\int_{\Omega_{l}} |Lu_{l}|^{2} d{\mathbf{x}}=\int_{S}|L_l\hat{u}_{l}|^{2} \,d\xi d\eta.
\end{align}
First, we introduce the requisite notation for defining the functional associated with the stability estimate. Let $\Omega_{l}$ be an element in the partition of $\Omega$ with $\{\Gamma_{l}^j\}_{1\leq j\leq 4}$ be its edges. Let $\Omega_m$ and $\Omega_n$, be
two adjacent elements in the sub-division of the domain $\Omega$ and $\Gamma^{j}_{m}$ be an edge of $\Omega_m$ and
$\Gamma^{k}_n$ be an edge of $\Omega_n$ such that $\Gamma^{j}_m=\Gamma^{k}_n$, i.e. $\Gamma^{j}_m$ is an edge common to
$\Omega_m$ and $\Omega_n$.

Let $\left.{[u]}\right|_{\Gamma^{j}_{m}}$ denote the jump of the function $u$ across the common edge $\Gamma^{j}_m$. We assume that this edge is mapped from $\eta = -1$ in the reference element $S$ via the transformation $M_m : S \rightarrow \Omega_m$, and simultaneously from $\eta = 1$ in $S$ via the mapping $M_n : S \rightarrow \Omega_n$. Then $\left.{[u]}\right|_{\Gamma^{j}_m}$ is a function of $\xi$ only. Moreover, $\Gamma^{j}_m$ is the image of the interval $I=(-1,1)$ under these mappings.
By applying the rule of differentiation for composite functions, we obtain
\[(u)_{x_r}=(\hat{u})_{\xi}\xi_{x_r}+(\hat{u})_{\eta}\eta_{x_r},\]
for $r=1,2$. Then the jumps in the function $u$ and its derivatives across the inter element boundaries are defined by
\begin{align*}
\left\|[u]\right\|_{0,\Gamma^{j}_m}^{2}&=\left\|\hat{u}_{m}(\xi,-1)-\hat{u}_{n}(\xi,1)\right\|_{0,{S}}^{2}, \;\;\mbox{and}\\
\left\|[u_{x_{r}}]\right\|_{\frac{1}{2},\Gamma^{j}_m}^{2}&=\left\|\left((\hat{u}_{m})_{\xi}\xi_{x_r}+(\hat{u}_m)_{\eta}\eta_{x_r}\right)(\xi,-1)
-\left((\hat{u}_{n})_{\xi}\xi_{x_r}+(\hat{u}_n)_{\eta}\eta_{x_r}\right)(\xi,1)\right\|_{\frac{1}{2},{S}}^{2}.
\end{align*}
Similarly, if $\Omega_p$ and $\Omega_q$ denote two neighbouring elements in the sub-division of $\Omega$ and $\Gamma^{i}_{q}$ be
an edge of $\Omega_p$ and $\Gamma^{k}_q$ be an edge of $\Omega_q$ such that $\Gamma^{i}_p=\Gamma^{k}_q$, i.e. $\Gamma^{i}_p$
is an edge common to $\Omega_p$ and $\Omega_q$ which we assume is the image of $\xi=-1$, and $\xi=1$ under the mappings
$M_{p}:S\rightarrow\Omega_{p}$ and $M_{q}:S\rightarrow\Omega_{q}$ respectively. Then we define the jump terms,
\begin{align*}
\left\|[u]\right\|_{0,\Gamma^{i}_p}^{2}&=\left\|\hat{u}_{p}(-1,\eta)-\hat{u}_{q}(1,\eta)\right\|_{0,{S}}^{2},\;\;\mbox{and}\\
\left\|[u_{x_{s}}]\right\|_{\frac{1}{2},\Gamma^{i}_p}^{2}&=\left\|\left((\hat{u}_{p})_{\xi}\xi_{x_s}+(\hat{u}_p)_{\eta}\eta_{x_s}\right)(-1,\eta)
-\left((\hat{u}_{q})_{\xi}\xi_{x_s}+(\hat{u}_q)_{\eta}\eta_{x_s}\right)(1,\eta)\right\|_{\frac{1}{2},{S}}^{2},
\end{align*}
for $s=1,2$.

Let $\Pi^W(\Omega)$ denotes the space of polynomials of degree less than or equal to $W$ in each variable on $\Omega$ and therefore,
$\Pi^W(S)$ denotes the space of polynomials of degree less than or equal to $W$ in each variable on $S$. Let
$\{\mathcal{F}_{u}\}$ be the spectral element representation of the function $u$ on $\Omega_l$ i.e.,
\begin{align*}
\{\mathcal F_{u}\}=\left\{\{\hat{u}_{l}(\xi,\eta)\}_{l}\right\}\:,
\end{align*}
and let $\mathcal{S}^{N,W}\{\mathcal{F}_{u}\}$ denote the space of spectral element functions on the whole domain
$\Omega$. 

To present the stability theorem, we need a set of quadratic form $\mathcal{V}^{N,W}(\{\mathcal{F}_{u}\})$ and $\mathcal{U}^{N,W}(\{\mathcal{F}_{u}\})$ defined as follows:
\begin{align}\label{eq3.6}
\mathcal{V}^{N,W}(\{\mathcal{F}_{u}\})&=\sum_{l=1}^{N}\|L_l\hat{u}_l(\xi,\eta)\|^{2}_{0,S}
+\sum_{\Gamma^{j}_l\subseteq \bar{\Omega}\setminus\partial\Omega}\left(\|[u]\|^{2}_{0,\Gamma^{j}_l}+\sum_{k=1}^{2}\|[u_{x_k}]\|^{2}_{1/2,\Gamma^{j}_l}\right)
+\sum_{\Gamma^{s}\subseteq\partial\Omega}\left\|u\right\|^{2}_{\frac{3}{2},\Gamma^{s}},
\end{align}
\begin{align}\label{eq3.7}
\mathcal{U}^{N,W}(\{\mathcal{F}_{u}\})&=\sum_{l=1}^{N}\sum_{|\alpha|\leq2}\epsilon^{2|\alpha|}\|D^{\alpha}\hat{u}_l\|^2_{0,S}
\notag\\
&=\sum_{l=1}^{N}\int_{-1}^{1}\int_{-1}^{1}\Big(\epsilon^{4}\left((\hat{u}_l)_{\xi\xi}^2+2(\hat{u}_l)_{\xi\eta}^2+(\hat{u}_l)_{\eta\eta}^2\right)+\epsilon^{2}\left((\hat{u}_l)_{\xi}^2+(\hat{u}_l)_{\eta}^2\right)+\hat{u}_l^2\Big)d\xi d\eta.
\end{align}
Note that the quadratic form $\mathcal{U}^{N,W}(\{\mathcal{F}_{u}\})$ is a scaled $H^2$-norm of $\hat{u}$ on $S$
which accounts for the growth of the derivatives with respect to the boundary layer parameter $\epsilon$. We shall use
$\mathcal{U}^{N,W}(\{\mathcal{F}_{u}\})$ as a preconditioner for the quadratic form $\mathcal{V}^{N,W}(\{\mathcal{F}_{u}\})$
to solve the normal equations derive from the least-squares formulation as will be shown in section~\ref{sec5}.

\noindent
We now state and prove the main stability estimate theorem.
\begin{thm}[Stability estimate]\label{thm3}
Consider the elliptic boundary layer problem (\ref{eq2.6}) with Dirichlet boundary conditions (\ref{eq2.7}).
Then there exists a constant $C_\epsilon > 0$ (depending on $\epsilon$ but independent of $W$) such that
\begin{equation}\label{eq3.8}
\mathcal{U}^{N,W}(\{\mathcal{F}_{u}\})\leq C_\epsilon(\log W)^2 \mathcal{V}^{N,W}(\{\mathcal{F}_{u}\}),
\end{equation}
provided $W=\mathcal{O}\left(\frac{1}{\epsilon}\right)$.
\end{thm}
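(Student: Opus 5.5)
The plan is the standard Dutt--Tomar/Kishore Kumar route for non-conforming least-squares spectral element stability estimates. We build a conforming correction, apply the continuous regularity estimate of Theorem~\ref{thm1} to it, and control the correction by the jump terms at the cost of a $(\log W)^2$ factor.

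\textbf{Step 1: conforming correction.} Given the non-conforming spectral element function $\{\mathcal{F}_u\}$, we construct correction polynomials $\{\hat v_l\}$ on each element such that $u+v$ belongs to $H^2_\epsilon(\Omega)$, i.e.\ $u+v$ and its first derivatives are continuous across every interior edge $\Gamma^j_l$. This is done edge by edge. On a common edge we split the jump in $u$ and in its normal derivative symmetrically between the two adjacent elements, and extend the one-dimensional edge data polynomially into $S$. Where edges meet at vertices, we use corner-compatible lifting polynomials.

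The key bound is
\begin{equation*}
\sum_{l}\|\hat v_l\|^2_{2,S}\leq C(\log W)^2\sum_{\Gamma^j_l\subseteq\bar\Omega\setminus\partial\Omega}\Big(\|[u]\|^2_{0,\Gamma^j_l}+\sum_{k=1}^2\|[u_{x_k}]\|^2_{1/2,\Gamma^j_l}\Big).
\end{equation*}
It rests on two one-dimensional polynomial facts.
\begin{itemize}
\item Inverse-type inequalities convert the $L^2$ jump of $u$ into an $H^{3/2}$ edge norm. These are compatible with the $H^{1/2}$ jumps of the derivatives because only the tangential part must be controlled.
\item Fractional-norm estimates for polynomials on subintervals and at the edge endpoints cost a factor $\log W$, giving $(\log W)^2$ after squaring. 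These are the lemmas that appear as Lemmas A.1--A.2 in such papers.
\end{itemize}

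\textbf{Step 2: apply regularity.} Set $z=u+v\in H^2(\Omega)$ and apply Theorem~\ref{thm1} to $z$ with data $f=Lz$ and $g=z|_{\Gamma}$:
\begin{equation*}
\|z\|^2_{2,\epsilon,\Omega}\leq C(\epsilon)\Big(\sum_l\|L_l(\hat u_l+\hat v_l)\|^2_{0,S}+\sum_{\Gamma^s\subseteq\partial\Omega}\|u+v\|^2_{3/2,\Gamma^s}\Big).
\end{equation*}
We then bound each term on the right.
\begin{itemize}
\item \emph{Interior residual.} Since the maps $M_l$ are analytic, we have $\|L_l\hat v_l\|_{0,S}\leq C\|\hat v_l\|_{2,S}$, so this part is controlled by Step 1.
\item \emph{Boundary term.} We use the trace inequality $\|v\|_{3/2,\Gamma^s}\leq C\|v\|_{2,\Omega_l}$ on each boundary element.
\end{itemize}
Finally, $\mathcal U^{N,W}\leq 2\|z\|^2_{2,\epsilon}+2\sum_l\|\hat v_l\|^2_{2,S}$ (after the norm-equivalence mapping back to $S$, which costs constants depending only on the mesh). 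Using $\epsilon\leq1$, this yields \eqref{eq3.8}.

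\textbf{Step 3: $\epsilon$ bookkeeping.} The hypothesis $W=\mathcal O(1/\epsilon)$ is used here. The correction bound of Step 1 is in the unweighted $H^2$ norm, while $\mathcal U^{N,W}$ weights second derivatives by $\epsilon^4$. Inverse inequalities like $\|\hat v\|_{2,S}\leq CW^2\|\hat v\|_{1,S}$ would normally spoil uniformity. With $\epsilon W\leq C$, however, such factors can be absorbed into $C_\epsilon$ without introducing $W$-dependence beyond $(\log W)^2$. I would track the weights explicitly and collect all $\epsilon$-dependence, including $C(\epsilon)$ from Theorem~\ref{thm1}, into $C_\epsilon$.

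\textbf{Main obstacle.} I expect the hardest step to be the construction in Step 1, specifically the corner compatibility. At vertices shared by four elements, the liftings of the function jumps and of the derivative jumps from different edges must agree, which requires correcting vertex values first. The $\log W$ loss arises precisely from controlling point values of polynomials by $H^{1/2}$-type norms. I would first try the approach of Dutt et al.:
\begin{itemize}
\item subtract bilinear/bicubic vertex interpolants;
\item handle the remaining edge data, which vanish at the endpoints, with the $H^{1/2}_{00}$ extension estimate for polynomials, which loses exactly one $\log W$.
\end{itemize}
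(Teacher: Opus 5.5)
Your proposal follows essentially the same route as the paper: a conforming correction $v$ built from vertex corrections followed by edge corrections at a $(\log W)^2$ cost (Lemma A.1), then Theorem~\ref{thm1} applied to $u+v$, the triangle inequality, and the boundary term handled as in Lemma A.2. You also make explicit the bound $\|L_l\hat v_l\|_{0,S}\le C\|\hat v_l\|_{2,S}$, which \eqref{eq3.10} leaves implicit. One caution: inverse inequalities and the hypothesis $W=\mathcal{O}(1/\epsilon)$ play no role in the paper's proof and are not needed in yours, since $\epsilon\le 1$ gives $\mathcal{U}^{N,W}(\{\mathcal{F}_u\})\le\sum_l\|\hat u_l\|^2_{2,S}$ directly, and $\|[u]\|_{3/2}$ is controlled by $\|[u]\|_0$ plus the tangential jump $\|[u_{x_k}]\|_{1/2}$ via \eqref{eq2.4}; actually invoking inverse inequalities would cost powers of $W$ that could only be hidden in $C_\epsilon$ by making the $(\log W)^2$ dependence vacuous.
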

\begin{proof}
By the Lemma A.1 (see Appendix A), there exists $\{\{\hat{v}_{l}(\xi,\eta)\}_l\}$ (where $\hat{v}_{l}$, $l=1,2,\cdots,N$ is zero on $\Gamma$)
for which $w=u+v\in H^{2}(\Omega)$. Moreover $w=u$ on the domain boundary $\Gamma$. Using Minkowski inequality, we get
\begin{align}\label{eq3.9}
\sum_{l=1}^{N}\|\hat{u}_{l}\|^{2}_{2,S}&\leq C\left(\sum_{l=1}^{N}\|\hat{u}_{l}+\hat{v}_{l}\|^{2}_{2,S}+\sum_{l=1}^{N}\|\hat{v}_{l}\|^{2}_{2,S}\right),
\end{align}
Using regularity result stated in Theorem~\ref{thm1}, the estimate
\begin{align}\label{eq3.10}
\sum_{l=1}^{N}\|\hat{u}_{l}\|^{2}_{2,S}\leq & C_\epsilon\left(\sum_{l=1}^{N}\|{L}_{l}\hat{u}_{l}\|^{2}_{2,S}+\|w\|_{\frac{3}{2},\Gamma}^{2}+\sum_{l=1}^{N}\|\hat{v}_{l}\|^{2}_{2,S}\right).
\end{align}
holds. Substituting the obtained results from Lemma A.1 and Lemma A.2 in equation (\ref{eq3.10}), we obtain (\ref{eq3.8}).
\end{proof}
\begin{rem}\label{rem3.1}
We emphasize that the stability estimate in Theorem \ref{thm3} for the pure $p$-version on a fixed mesh is inherently non-uniform,
as reflected by the $\epsilon$-dependent constant $C_\epsilon$ and the asymptotic condition $W = \mathcal{O}\left(\frac{1}{\epsilon}\right)$.
In practical computations for singularly perturbed problems where $\epsilon$ can be extremely small (e.g., $\epsilon \approx 10^{-10}$),
scaling the polynomial degree $W$ inversely with $\epsilon$ is computationally infeasible. This severe practical limitation, and the
corresponding non-uniformity of the estimates, fundamentally motivates our formulation of the $rp$-version which employs specialized
boundary layer meshes in the subsequent sections. The $rp$-version successfully overcomes this restriction, eliminating the
$W = \mathcal{O}(1/\epsilon)$ requirement to achieve true parameter uniform convergence.
\end{rem}

\section{Numerical scheme}\label{sec4}

The construction of a spectral element approximation for $u$ on every element of the subdivision of $\Omega$ is detailed in Section~\ref{sec3}. To develop the numerical scheme, we first introduce a functional $\mathcal R^{N,W}({\mathcal F_u})$, which is associated with the quadratic form $\mathcal V^{N,W}({\mathcal F_u})$.

Let $J_{l}(\xi,\eta)$ be the Jacobian of the mapping $M_{l}(\xi,\eta)$. Let $f_{l}(\xi,\eta)=f_{l}\left(M_{l}(\xi,\eta)\right)$ and define
\[F_{l}(\xi,\eta)=f_{l}(\xi,\eta)\sqrt{J_{l}(\xi,\eta)},\]
for $l=1,2,\cdots,N$.

Recall that the boundary condition $w=g$ on $\Gamma$ in the discrete form will be $u=g$ on $\Gamma^{s}\cap\partial\Omega^{m}$, where $\Gamma^{s}\subseteq\partial\Omega$, $\partial\Omega^{m}$ denotes the boundary of $\Omega^{m}$.
Let $\Gamma^{s}\cap\partial\Omega^{m}=\gamma^{m}$ be the image of the mapping $M^{m}$ of $S$ onto $\Omega^{m}$ corresponding to
the $\xi=1$ and
\[h_{m}(\eta)=g\left(M^{m}(1,\eta)\right),\quad -1\leq\eta\leq 1.\]
Define the functional
\begin{align}
\mathcal{R}^{N,W}(\{\mathcal{F}_{u}\})&=\sum_{l=1}^{N}\|({L}_{l})\hat{u}_{l}(\xi,\eta)-F_{l}(\xi,\eta)\|^{2}_{0,S}
+\sum_{\Gamma_{l}^j\subseteq{\bar{\Omega}}\setminus\partial\Omega}\left(\|[u]\|^{2}_{0,\Gamma_{l}^j}+\sum_{k=1}^{2}\|[u_{x_k}]\|^{2}_{\frac{1}{2},\Gamma_{l}^j}\right) \notag\\
&+\sum_{\Gamma^{s}\subseteq \partial\Omega}\|u-h_{m}\|^{2}_{\frac{3}{2},\Gamma^{s}}.
\end{align}

Accordingly, the numerical scheme is presented as follows:
\begin{prop}
Find $\mathcal{F}_{z}\in {S}^{N,W}$ which minimizes the functional $\mathcal{R}^{N,W}(\{\mathcal{F}_{u}\})$ over all $\mathcal{F}_{u}\in{S}^{N,W}$,
where ${S}^{N,W}$ is the space of spectral element functions $\mathcal{F}_{u}$.
\end{prop}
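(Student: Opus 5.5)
The proposition asks for a minimizer of $\mathcal{R}^{N,W}$ over the finite-dimensional space ${S}^{N,W}$, so I will show that a minimizer exists, is unique, and is characterized by a symmetric positive definite linear system (the normal equations).

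\textbf{Step 1: structure of the functional.} Write $\mathcal{R}^{N,W}(\{\mathcal{F}_u\}) = \mathcal{V}^{N,W}(\{\mathcal{F}_u\}) - 2\,\mathcal{L}(\{\mathcal{F}_u\}) + c$, where:
\begin{itemize}
\item $\mathcal{V}^{N,W}$ is the quadratic form (\ref{eq3.6});
\item $\mathcal{L}$ is a linear functional collecting the cross terms $\int_S (L_l\hat u_l)F_l\,d\xi d\eta$ and the $H^{3/2}$ inner products of $u$ with $h_m$ on the boundary edges;
\item $c$ is a constant independent of $u$.
\end{itemize}
This expansion uses only that $L_l$, the jump operators and the traces are linear, and that the fractional norms (\ref{eq2.3})--(\ref{eq2.4}) arise from inner products. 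Fix a basis of ${S}^{N,W}$, for instance the monomial coefficients $\alpha_{r,s}$ in (\ref{eq3.2}) on each element. Then $\mathcal{R}^{N,W}$ becomes $\mathbf{a}^T V\mathbf{a} - 2\mathbf{b}^T\mathbf{a} + c$ with $V$ symmetric positive semidefinite.

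\textbf{Step 2: positive definiteness.} This is the main point, and I will get it from Theorem~\ref{thm3}. Suppose $\mathcal{V}^{N,W}(\{\mathcal{F}_u\})=0$. The stability estimate (\ref{eq3.8}) then gives $\mathcal{U}^{N,W}(\{\mathcal{F}_u\})=0$. Since $\mathcal{U}^{N,W}$ in (\ref{eq3.7}) contains $\sum_l\|\hat u_l\|_{0,S}^2$, every $\hat u_l$ vanishes, so $V$ is positive definite. Theorem~\ref{thm3} assumes $W=\mathcal{O}(1/\epsilon)$. For definiteness alone I only need that the constant $C_\epsilon(\log W)^2$ is finite for the fixed $W$ in use, so I will state the assumption as in Theorem~\ref{thm3}.

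If that hypothesis is to be avoided, a direct argument also works, and I would include it as a remark. Vanishing jumps in $u$ and $\nabla u$ force $u\in H^2(\Omega)$. Then $Lu=0$ and $u=0$ on $\Gamma$, and uniqueness in Theorem~\ref{thm1} gives $u=0$.

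\textbf{Step 3: conclusion.} A strictly convex quadratic on $\mathbb{R}^{N(W+1)^2}$ has a unique minimizer. It is characterized by the normal equations $V\mathbf{a}=\mathbf{b}$, equivalently $\mathcal{V}^{N,W}(\mathcal{F}_z,\mathcal{F}_u)=\mathcal{L}(\mathcal{F}_u)$ for all $\mathcal{F}_u\in{S}^{N,W}$, where $\mathcal{V}^{N,W}(\cdot,\cdot)$ is the bilinear form associated with (\ref{eq3.6}). This yields $\mathcal{F}_z$ and shows the scheme is well defined. Positive definiteness of $V$ also justifies solving these equations by preconditioned conjugate gradients, with $\mathcal{U}^{N,W}$ as the preconditioner and with the spectral equivalence supplied by (\ref{eq3.8}).
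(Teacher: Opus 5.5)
Your argument is correct. It is essentially the paper's route made explicit: the paper gives no separate proof, but simply calls the minimizer ``unique'' and passes to the least-squares normal equations $A^TAU=A^Tb$ solved by PCGM, and your Steps 1--3 (quadratic structure, positive definiteness of the normal matrix via the stability estimate (\ref{eq3.8}), unique solution of the normal equations) supply exactly the justification the paper leaves implicit. Your direct kernel argument is a worthwhile addition, since it proves injectivity without the $W=\mathcal{O}(1/\epsilon)$ proviso of Theorem~\ref{thm3}: vanishing jumps give $u\in H^2(\Omega)$, and then $Lu=0$ with $u=0$ on $\Gamma$ forces $u=0$.
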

Therefore, our approximate solution corresponds to the unique $\{\mathcal{F}_{z}\}\in S^{N,W}$, which minimize the functional
$\mathcal{R}^{N,W}(\{\mathcal F_{u}\})$ over all $\{\mathcal F_{u}\}\in S^{N,W}$. More precisely,

\emph{The numerical scheme seeks a solution that minimizes three contributions: the sum of the squared norm of the residuals in the partial differential equation, the sum of the squared norms of the residuals in the boundary conditions (measured in fractional Sobolev norms), and sum of the squared norms of the jumps in the function and its derivatives across inter-element boundaries, evaluated in appropriate fractional-order Sobolev norms. The latter term is introduced to enforce continuity along the inter-element boundaries.}

The proposed numerical scheme is essentially based on a least-squares method and we apply the preconditioned conjugate gradient method
(PCGM) to solve the normal equations which result from the least-squares formulation. Suppose that the normal equations are
\[A^TAU=A^Tb,\quad \text{or}\quad BU=c,\quad \text{with}\quad A^TA=B,\quad A^Tb=c.\]

In PCGM every iteration requires the computation of a matrix-vector product. The practicality of the method depends on the efficient and inexpensive evaluation of $BU$ for any vector $U$.

It can be shown as in~\cite{HUS2,HUS3} that $BU$ can be computed
economically without storing the mass and stiffness matrices explicitly unlike the traditional FEM. Furthermore, we should be able to develop a preconditioner for the matrix $B$ that is both effective and robust, ensuring that the condition number of the preconditioned matrix $M^{-1}B$ is as low as feasible. Once, this is done, we can use the PCGM to efficiently compute $(A^TA)^{-1}b$ for any vector $b$. In Section~\ref{sec5}, a preconditioner is constructed for the matrix associated with the normal equations, formulated in terms of a parameter-weighted $H_{\epsilon}^2$-norm.
 As shown in~\cite{DBR}, there exists a tensor-product polynomial basis that diagonalizes this preconditioner.
Since the least-squares functional contain jumps in function and its derivatives at the inter element boundaries, therefore the
values of the function and its derivatives must be exchanged between the elements in each iteration of the PCGM.

\section{Preconditioning techniques and parallelization}\label{sec5}

The quadratic form $\mathcal{V}^{N,W}(\{\mathcal F_{u}\})$ coincides with the functional  $\mathcal{R}^{N,W}(\{\mathcal F_{u}\})$ in the case of zero input data. Our goal is to design a quadratic form that is spectrally equivalent to the least-squares functional $\mathcal{V}^{N,W}(\{\mathcal F_{u}\})$,
computationally easy to invert, and ensures that the preconditioned system has a condition number independent of the mesh size (i.e., independent of the number of elements $N$), polynomial degree $W$, and boundary layer parameter $\epsilon$. This will guarantee that the preconditioner is maximally robust with respect to all the parameters.

To construct the preconditioner, consider the reference square $S=(-1,1)^2$ and the quadratic form $\mathcal{U}^{N,W}(\{\mathcal F_{u}\})$ as given in (\ref{eq3.7}). Now $\mathcal{U}^{N,W}(\{\mathcal F_{u}\}$ can be reformulated in the compact form as
\begin{align}\label{eq5.1}
\mathcal{U}^{N,W}(\{\mathcal F_{u}\})&=\epsilon^{4}|{\hat{u}}|^{2}_{2,S}+\epsilon^{2}|\hat{u}|^2_{1,S}+\|\hat{u}\|^2_{0,S}.
\end{align}

The preconditioner for the whole domain $\Omega$ corresponds to the quadratic form
\begin{align}\label{eq5.2}
\mathcal{U}^{N,W}(\{\mathcal F_{u}\})&=\sum_{l=1}^{N}\int_{S=M_l^{-1}(\Omega_l)}\left(\sum_{|\alpha|=2}\epsilon^4|D^{\alpha}\hat{u}_l|^{2}+\epsilon^{2}\sum_{|\alpha|=1}|D^{\alpha}\hat{u}_l|^2+|\hat{u}_l|^2\right)d\xi d\eta\notag\\
&=\sum_{l=1}^{N}\int_{S}\left(\epsilon^4(\hat{u}_l)_{\xi\xi}^2+2\epsilon^4(\hat{u}_l)_{\xi\eta}^2+\epsilon^4(\hat{u}_l)_{\eta\eta}^2+\epsilon^{2}(\hat{u}_l)_\xi^2+\epsilon^{2}(\hat{u}_l)_\eta^2+\hat{u}_l^2\right)d\xi d\eta.
\end{align}
The following spectral equivalence holds.
\begin{thm}\label{thm5.1}
The quadratic form $\mathcal{U}^{N,W}(\{\mathcal F_{u}\})$ is spectrally equivalent to the quadratic form $\mathcal{V}^{N,W}(\{\mathcal F_{u}\})$. Moreover, there exist positive constants $c_\epsilon$ and $C_\epsilon$ (depending on $\epsilon$, but independent of $N$ and $W$), such that
\begin{align}\label{eq5.3}
c_\epsilon\mathcal{V}^{N,W}(\{\mathcal F_{u}\})\leq \mathcal{U}^{N,W}(\{\mathcal F_{u}\})\leq C_\epsilon(\log W)^2\mathcal{V}^{N,W}(\{\mathcal F_{u}\}),
\end{align}
provided $W = \mathcal{O}\left(\frac{1}{\epsilon}\right)$.
\end{thm}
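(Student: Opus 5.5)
The right-hand inequality in (\ref{eq5.3}) is exactly the stability estimate of Theorem~\ref{thm3}: $\mathcal{U}^{N,W}\le C_\epsilon(\log W)^2\mathcal{V}^{N,W}$ under the hypothesis $W=\mathcal{O}(1/\epsilon)$. We therefore invoke Theorem~\ref{thm3} directly and spend the effort on the left-hand inequality $c_\epsilon\mathcal{V}^{N,W}\le\mathcal{U}^{N,W}$. That bound is a continuity (trace and inverse) estimate for each term of $\mathcal{V}^{N,W}$ in (\ref{eq3.6}), bounded by $\mathcal{U}^{N,W}$ in its compact form (\ref{eq5.1})--(\ref{eq5.2}).

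We treat the three groups of terms in (\ref{eq3.6}) separately.

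\emph{Residual term.} Since the maps $M_l$ are analytic with analytic inverses, the Jacobians $J_l$, their square roots and the metric coefficients are bounded together with their derivatives. Hence
\[\|L_l\hat u_l\|_{0,S}^2\le C\left(\epsilon^4|\hat u_l|_{2,S}^2+\epsilon^2|\hat u_l|_{1,S}^2+\|\hat u_l\|_{0,S}^2\right).\]
The first-order contributions arise from the chain rule in the transformed operator. This piece is uniform in $\epsilon$.

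\emph{Jump terms.} On an interior edge we use the triangle inequality to split $\|[u]\|_{0,\Gamma}$ and $\|[u_{x_k}]\|_{1/2,\Gamma}$ into single-element traces. We then apply the trace theorem on $S$:
\[\|\hat u_m(\cdot,-1)\|_{0,I}\le C\|\hat u_m\|_{1,S},\qquad \|(\hat u_m)_\xi\xi_{x_r}+(\hat u_m)_\eta\eta_{x_r}\|_{1/2,I}\le C\|\hat u_m\|_{2,S},\]
where the smooth coefficients $\xi_{x_r},\eta_{x_r}$ are absorbed as multipliers on $H^{1/2}$. Converting unweighted norms to weighted ones gives $\|\hat u\|_{2,S}^2\le\epsilon^{-4}\,\mathcal{U}$ on that element. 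This is the source of the $\epsilon$-dependence of $c_\epsilon$, e.g. $c_\epsilon\sim\epsilon^4$, but the constant stays independent of $N$ and $W$.

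\emph{Boundary term.} This term is $\|u\|_{3/2,\Gamma^s}^2$, and here lies the main obstacle: the trace of an $H^2(S)$ function is only in $H^{3/2}$ of each edge, so no loss is allowed. We plan to use the definition (\ref{eq2.4}), $\|u\|_{3/2,I}^2=\|u\|_{0,I}^2+\|u'\|_{1/2,I}^2$. The tangential derivative along the edge is $(\hat u_l)_\eta(1,\cdot)$ (or its analogue), and its $H^{1/2}(I)$ norm is bounded by $C\|(\hat u_l)_\eta\|_{1,S}\le C\|\hat u_l\|_{2,S}$ by the standard trace theorem. No inverse inequality in $W$ is needed, so the constant stays $W$-independent. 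Summing over the finitely many boundary edges yields a bound $C\epsilon^{-4}\mathcal{U}^{N,W}$.

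Each interior edge is shared by exactly two elements and each element has four edges, so summing all contributions gives $\mathcal{V}^{N,W}\le C\epsilon^{-4}\,\mathcal{U}^{N,W}$ with $C$ independent of $N$ and $W$. The left inequality then follows with $c_\epsilon=\epsilon^4/C$.

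The delicate points are these. First, the $H^{1/2}$ multiplier estimate for the metric coefficients and the $H^{3/2}$ boundary trace must be applied at exactly the $H^2$ level, so that no inverse inequality in $W$ is introduced. Second, the constants must be uniform over the element mappings; for a fixed mesh this holds because the $M_l$ are finitely many analytic maps. Combining both inequalities shows that $\mathcal{U}^{N,W}$ and $\mathcal{V}^{N,W}$ are spectrally equivalent up to the factor $(\log W)^2$.
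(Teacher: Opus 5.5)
Your proposal follows essentially the same route as the paper. The upper bound is quoted directly from Theorem~\ref{thm3}. The lower bound is proved term by term: the residual, the interior jumps via trace theorems, and the boundary $H^{3/2}$ term as the trace of an $H^2$ function. In each case the unweighted $\|\hat u_l\|_{2,S}^2$ is converted into $\mathcal{U}^{N,W}$ at a cost of $\epsilon^{-4}$, giving $c_\epsilon\sim\epsilon^4$. Your version is in fact more careful than the paper's: you track the Jacobian and chain-rule terms in $L_l\hat u_l$, and you bound the derivative jumps by $\|\hat u_l\|_{2,S}$ rather than by $\|\Delta u\|_{0,S}$ as the paper writes.

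As you note yourself, both arguments give constants that are uniform only for a fixed set of element maps. So the claimed independence of $N$ is not actually established: the metric factors $\xi_{x_r},\eta_{x_r}\sim 1/h$ in the derivative jumps are not scaled out of $\mathcal{U}^{N,W}$. This defect is shared with the paper's own proof, not a gap specific to yours.
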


\begin{proof}
From the stability estimate in Theorem~\ref{thm3}, there exists a positive constant $C_\epsilon$ (depending on $\epsilon$) independent of $N$ and $W$ such that the upper bound
\begin{align}\label{eq5.4}
\mathcal{U}^{N,W}(\{\mathcal F_{u}\})\leq C_\epsilon(\log W)^2\mathcal{V}^{N,W}(\{\mathcal F_{u}\}),
\end{align}
holds, provided $W = \mathcal{O}\left(\frac{1}{\epsilon}\right)$.

For the proof of the lower bound, we need to bound the norm of the differential operator, the jump terms and boundary terms appearing in $\mathcal{V}^{N,W}(\{\mathcal{F}_u\})$
in terms of $\mathcal{U}^{N,W}(\{\mathcal{F}_u\})$. For the residual term, using the triangle inequality and Young's inequality, we have
\begin{align*}
\|L{u}_l\|^2_{0,\Omega_l}&\leq\|-\epsilon^2\Delta{u}_l+au_l\|^2_{0,\Omega_l}\leq 2\epsilon^4\|\Delta{u}_l\|_{0,\Omega_l}^2+2\|au\|_{0,\Omega_l}^2\notag\\
&\leq 2\epsilon^4\|\Delta{u}_l\|_{0,\Omega_l}^2+2\|a\|_{L^\infty(\Omega_l)}^2\|u\|_{0,\Omega_l}^2.
\end{align*}
Summing over all elements,
\begin{align}\label{eq5.5}
\sum_{l=1}^{N}\|L{u}_l\|^2_{0,\Omega_l}&\leq 2\epsilon^4\sum_{l=1}^{N}\|\Delta{u}_l\|^2_{0,\Omega_l}+\sum_{l=1}^{N}2\|a\|^2_{L^\infty(\Omega_l)}\|u\|^2_{0,\Omega_l}\leq C \mathcal{U}^{N,W} (\{\mathcal{F}_u\}).
\end{align}
Next, we consider the jumps in the derivatives to enforce the discontinuity across inter-element edges. Let $\Gamma_l^j$ be an edge in
the interior of an element $\Omega_l$. Using trace theorems and Sobolev embeddings~\cite{ADAM},
\begin{align*}
\|\left[u_{x_k}\right]\|^2_{1/2,\Gamma_l^j}\leq C\|\Delta u\|_{0,S}^2,\quad \|[u]\|^2_{0,\Gamma_l^j}\leq C\|u\|_{0,S}^2,
\end{align*}
where $C$ is a constant independent of $\epsilon$ and $W$. Because $\mathcal{U}^{N,W}$ is an $\epsilon$-scaled norm (weighting second derivatives by $\epsilon^4$), bounding the unscaled $H^2$-norm by $\mathcal{U}^{N,W}$ introduces an $\epsilon$-dependence. Thus, there exists a constant $C_\epsilon' = \mathcal{O}(\epsilon^{-4})$ such that
\begin{align}\label{eq5.6}
\sum_{\Gamma_l^j\subset\bar{\Omega}\setminus\partial\Omega}\left(\|[u]\|^2_{0,\Gamma_l^j}+\sum_{k=1}^{2}\|\left[u_{x_k}\right]\|^2_{1/2,\Gamma_l^j}\right)
&\leq C\left(\|u\|_{0,S}^2+\|\Delta u\|_{0,S}^2\right)\leq C \|u\|_{2,S}^2\leq C_\epsilon' \mathcal{U}^{N,W} (\{\mathcal{F}_u\}).
\end{align}
Finally, if $\Gamma^s$ is an edge that lies on the domain boundary $\partial\Omega$, then using the trace theorem for Sobolev spaces~\cite{ADAM}, we have
\begin{align*}
\|u_l\|^2_{0,\Gamma^s}\leq \|u_l\|^2_{\frac{3}{2},S}\lesssim \|u_l\|^2_{2,S}.
\end{align*}
Summing over all $\Gamma^s\subseteq\partial\Omega$ and again utilizing the $\epsilon$-dependent bound for the unscaled norm,
\begin{align}\label{eq5.7}
\sum_{\Gamma^s\subseteq\partial\Omega}\|u\|^2_{\frac{3}{2},\Gamma^s}\leq C\|u\|_{2,S}^2 \leq C_\epsilon' \mathcal{U}^{N,W} (\{\mathcal{F}_u\}).
\end{align}
Adding and combining (\ref{eq5.5})$-$(\ref{eq5.7}), there exists a generic constant $C_\epsilon''$ (depending on $\epsilon$, but independent of $N$ and $W$) such that
\begin{align}\label{eq5.8}
\mathcal{V}^{N,W}(\{\mathcal F_{u}\}) &\leq C_\epsilon'' \mathcal{U}^{N,W} (\{\mathcal{F}_u\}).
\end{align}
Combining the upper and lower bounds given by (\ref{eq5.4}) and (\ref{eq5.8}) respectively, we get
\[c_\epsilon \mathcal{V}^{N,W} (\{\mathcal F_{u}\})\leq \mathcal{U}^{N,W}(\{\mathcal F_{u}\}) \leq C_\epsilon (\log W)^2 \mathcal{V}^{N,W}(\{\mathcal F_{u}\}),\]
with $c_\epsilon=\frac{1}{C_\epsilon''}$. Hence, the quadratic forms $\mathcal{U}^{N,W}(\{\mathcal F_{u}\})$ and $\mathcal{V}^{N,W}(\{\mathcal F_{u}\})$ are
spectrally equivalent.
\end{proof}

The quadratic form $\mathcal{U}^{N,W}(\{\mathcal F_{u}\})$, which is composed of a decoupled set of quadratic forms on each element, can be employed as a preconditioner for the matrix $A$. As a result, the preconditioned system exhibits a condition number of $O((\log W)^2)$. Thus on each element, the preconditioner is associated locally with the the quadratic form
\begin{align}\label{eq5.9}
\mathcal B_\epsilon({u})=\int_{S}\left(\epsilon^4u_{\xi\xi}^2+2\epsilon^4u_{\xi\eta}^2+\epsilon^4u_{\eta\eta}^2+u_\xi^2+u_\eta^2+u^2\right)d\xi d\eta,
\end{align}
where $u=u(\xi,\eta)$ is a polynomial of degree $W$ in $\xi$ and $\eta$ separately.

Define a quadratic form
\begin{align}\label{eq5.10}
\mathcal{C}_\epsilon({u})=\int_{S}\left(\epsilon^4u_{\xi\xi}^2+\epsilon^4u_{\eta\eta}^2+u_\xi^2+u_\eta^2+u^2\right)d\xi d\eta.
\end{align}

According to~\cite{DBR} (Theorem 2.1), $\mathcal C_\epsilon({u})$ and $\mathcal B_\epsilon({u})$ are spectrally equivalent. Furthermore, there is a basis of polynomials that diagonalizes $\mathcal{C}_\epsilon(u)$ using separation of variables, making the associated matrix straightforward to invert on each element (see~\cite{DBR} for details).
For ease of parallelism, each element of the discretized domain is assigned to a unique processor. While applying the PCGM technique, communication between neighboring processors is limited to the exchange of function values and their derivatives at inter-element boundaries. Additionally, only two global scalar values are required to update the approximate solution and the search direction. As a result, inter-processor communication remains minimal. To attain exponential accuracy in the numerical solution, approximately $O(W \ln W)$ iterations are required by the PCGM.

\section{Error estimates}\label{sec6}
We now present error bounds for the spectral element approximation of the problem under consideration. We first study the $p$-version approach of the method by keeping the number of elements $N=1$ and vary the polynomial order $W$. Let $S^W$ denote the space of spectral element functions for the $p$-version. The main result is as follows:

\begin{thm}\label{thm6.1}
Let $\Omega=S=(-1,1)^2$, $I=(-1,1)$ and $U(x,y)=w\left(M(\xi,\eta)\right)=\phi(x)e^{-\frac{y+1}{\epsilon}}$ for $(\xi,\eta)\in S$
be the exact solution of the problem (\ref{eq2.6})-(\ref{eq2.7}) such that $\phi(x)\in H^2(I)$ is independent of $\epsilon$ and let
$\mathcal{F}_{z}\in S^W$ minimizes the functional $\mathcal{R}^{W}(\{\mathcal{F}_{u}\})$ over all $\mathcal{F}_{u}\in S^W$. Then there
exist a constant $C$ (independent of $\epsilon$ and $W$) such that the error estimate
\begin{align}\label{eq6.1}
\|U(x,y)-z(x,y)\|_{2,\epsilon,\Omega}\leq C \frac{\sqrt{\log W}}{W},
\end{align}
holds provided $W=O(\frac{1}{\epsilon})$.
\end{thm}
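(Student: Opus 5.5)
The proof is a quasi-optimality argument for the least-squares minimiser combined with one-dimensional best-approximation estimates for the exponential layer. Since $N=1$ and $M$ is the identity, the jump terms in $\mathcal{R}^W$ are absent. Only the residual term and the boundary term on the four edges $\Gamma^s\subseteq\partial\Omega$ remain.

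\textbf{Step 1: reduce to best approximation.} Pick any comparison function $\Pi U\in S^W$. Because $U$ solves (\ref{eq2.6})--(\ref{eq2.7}) exactly, we have $\mathcal{R}^W(\{\mathcal{F}_{U}\})=0$. By linearity, $\mathcal{R}^W(\{\mathcal{F}_{\Pi U}\})=\mathcal{V}^W(\{\mathcal{F}_{\Pi U-U}\})$, where $\mathcal{V}^W$ is evaluated on the non-polynomial function $\Pi U-U$, which is legitimate for $H^2$ functions. Since $z$ minimises $\mathcal{R}^W$,
\[
\mathcal{V}^W(\{\mathcal{F}_{z-\Pi U}\})\le 2\left(\mathcal{R}^W(\{\mathcal{F}_{z}\})+\mathcal{R}^W(\{\mathcal{F}_{\Pi U}\})\right)\le 4\,\mathcal{V}^W(\{\mathcal{F}_{U-\Pi U}\}).
\]
Now $z-\Pi U\in S^W$, so the stability estimate, Theorem~\ref{thm3} (valid for $W=O(1/\epsilon)$), gives
\[
\|z-\Pi U\|^2_{2,\epsilon,S}=\mathcal{U}^W(\{\mathcal{F}_{z-\Pi U}\})\le C(\log W)^2\,\mathcal{V}^W(\{\mathcal{F}_{U-\Pi U}\}).
\]
The triangle inequality then yields
\[
\|U-z\|_{2,\epsilon,S}\le \|U-\Pi U\|_{2,\epsilon,S}+C\log W\,\mathcal{V}^W(\{\mathcal{F}_{U-\Pi U}\})^{1/2}.
\]

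\textbf{Step 2: bound $\mathcal{V}^W(\{\mathcal{F}_{U-\Pi U}\})$ by approximation norms.} The residual term satisfies $\|L(U-\Pi U)\|_{0,S}\le C\|U-\Pi U\|_{2,\epsilon,S}$. The boundary term $\|U-\Pi U\|_{3/2,\Gamma^s}$ is controlled by trace inequalities. These introduce powers of $\epsilon^{-1}$, which the condition $W=O(1/\epsilon)$ lets us convert into powers of $W$.

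\textbf{Step 3: the tensor-product approximation.} Take $\Pi U=(\pi\phi)(x)\,(\pi_\epsilon w_\epsilon)(y)$. Here $\pi$ is an $H^2$-type projection in one variable (for instance the $H^2$-conforming projection preserving endpoint values and first derivatives), and $\pi_\epsilon w_\epsilon$ is the one-dimensional approximation of $e^{-(y+1)/\epsilon}$ from \cite{HUS2,HUS3}. Write
\[
U-\Pi U=(\phi-\pi\phi)\,w_\epsilon+\pi\phi\,(w_\epsilon-\pi_\epsilon w_\epsilon).
\]
The first piece uses $\|w_\epsilon\|_{k,\epsilon,I}\le C$ uniformly in $\epsilon$ together with $\|\phi-\pi\phi\|_{H^k(I)}\le CW^{k-2}\|\phi\|_{2,I}$. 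Since $\phi\in H^2$ only, this piece gives an $O(1/W)$-type rate in the weighted norm, with the $\epsilon$-weights absorbing the lost derivatives. The second piece uses the one-dimensional layer estimate from \cite{HUS2,HUS3}, which is of order $W^{-1}$ or better when $W\epsilon\gtrsim 1$.

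\textbf{Main obstacle.} Step 2 combined with the logarithm is the delicate point. A naive estimate gives $\log W/W$, not $\sqrt{\log W}/W$. To recover the claimed rate, I would first try to avoid the full $(\log W)^2$ loss: apply the stability estimate only to the boundary component, using a polynomial lifting of the boundary error, where the $\log W$ factor genuinely enters through the $H^{3/2}$-norm of polynomial traces. The interior residual part would be treated with $\epsilon$-uniform coercivity and no logarithm. Balancing these two contributions, with the boundary error itself carrying an extra $W^{-1/2}$-type gain from the trace norm, should produce the stated factor $\sqrt{\log W}/W$. If this balancing fails, the fallback is the weaker bound with $\log W$ in place of $\sqrt{\log W}$.
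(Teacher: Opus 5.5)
Your proposal has a genuine gap, and the gap is not the logarithm.

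In Step~1 you use Theorem~\ref{thm3} with an $\epsilon$-free constant, but that theorem only provides $C_\epsilon$ (Remark~\ref{rem3.1} stresses this). So as written, your final constant depends on $\epsilon$. Moreover, with $N=1$ there are no interior edges, so the correction in Lemma~A.1 is $v\equiv0$. The stability bound then collapses to the regularity estimate of Theorem~\ref{thm1} applied to $u$ itself. The $(\log W)^2$, which comes from the jump corrections, is never incurred, so the ``$\log W$ versus $\sqrt{\log W}$'' tension you describe does not arise.

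The decisive failure is Step~2, and even an $\epsilon$-uniform stability constant would not cure it. The boundary part of $\mathcal{V}^W$ is the \emph{unweighted} norm $\|\cdot\|_{3/2,\Gamma^s}$. The bound delivered by Step~1 is therefore at least a constant multiple of $\inf_{\Pi U\in S^W}\mathcal{V}^W(\{\mathcal{F}_{U-\Pi U}\})^{1/2}$.
\begin{itemize}
\item On $y=-1$ this infimum contains $\|\phi-\Pi U(\cdot,-1)\|_{3/2,I}$. For $\phi$ merely in $H^2(I)$, only $O(W^{-1/2})$ can be guaranteed: the stronger trace norm costs, rather than gains, half a power of $W$. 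So even your fallback $\log W/W$ is out of reach.
\item On $x=\pm1$ it contains $\|\phi(\pm1)w_\epsilon-\Pi U(\pm1,\cdot)\|_{3/2,I}$. When $\phi(\pm1)\neq0$ this is not bounded uniformly in $\epsilon$: already $\inf_{p\in\Pi^W(I)}\|(w_\epsilon-p)'\|_{0,I}\gtrsim\epsilon^{-1/2}$ once $\epsilon\ll W^{-2}$.
\end{itemize}
The hypothesis $W=O(1/\epsilon)$ cannot repair this. It says $\epsilon\lesssim W^{-1}$, which lets you trade \emph{positive} powers of $\epsilon$ for negative powers of $W$; that is exactly what makes your Step~3 work. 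But it gives no upper bound on $\epsilon^{-1}$, so the factors created by un-weighting cannot be turned into harmless powers of $W$ as you propose. The closing ``balancing'' supplies no mechanism that avoids any of this.

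For comparison, the paper's proof has no analogue of your Steps~1--2. It simply sets $z(x,y)=h(x)k(y)$, with $h$ the $H^2$-projection of $\phi$ and $k$ a one-dimensional approximant of $w_\epsilon$. It splits $U-z=\phi(w_\epsilon-k)+k(\phi-h)$ and reduces everything by Fubini to products of one-dimensional norms. It then inserts the cited layer estimate (\ref{eq6.10}), $\|w_\epsilon-k\|_{2,\epsilon,I}\le C_1\sqrt{\log W}/W$, together with (\ref{eq6.12})--(\ref{eq6.14}).

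So the $\sqrt{\log W}$ is imported from (\ref{eq6.10}). That is the robust estimate for the regime $W\lesssim 1/\epsilon$, not the $W\epsilon\gtrsim1$ regime you mention. Your Step~3 keeps each $\epsilon$-weight attached to the derivative it multiplies. It is thus a cleaner version of the paper's whole computation and avoids the prefactor $\epsilon^{-4}$ introduced in (\ref{eq6.9}).

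Be aware, though, that this only bounds the error of the particular tensor product $h(x)k(y)$. The paper never uses the minimising property of $z$ or the stability theorem. Its last step (\ref{eq6.15}) also does not hold as written. The right-hand side contains the $W$-independent term $C\epsilon^{-4}\cdot 2\epsilon^4L_2^2$. The leading term $C\epsilon^{-4}\log W/W^2$ is $O(\log W/W^2)$ only if $\epsilon$ is bounded below, which $W=O(1/\epsilon)$ does not give.

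Your instinct that an estimate for the actual least-squares minimiser has to go through quasi-optimality is the right one. It is exactly that route which exposes the unweighted boundary terms as the obstruction to deriving (\ref{eq6.1}) for $z$.
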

\begin{proof}
Recall from (\ref{eq2.5}) that,
\begin{align*}
\|u\|^2_{m,\epsilon,\Omega}=\sum_{|\alpha|\leq m}\epsilon^{2|\alpha|}\|\partial^\alpha u\|^2_{0,\Omega}=\epsilon^{4}|u|_{2,\Omega}^2
+\epsilon^{2}|u|_{1,\Omega}^2+\|u\|_{0,\Omega}^2.
\end{align*}
Thus, to establish (\ref{eq6.1}) we need to get bounds for the terms,
\[\|U(x,y)-z(x,y)\|^{2}_{0,\Omega},\quad|U(x,y)-z(x,y)|^{2}_{1,\Omega},\quad\text{and}\quad |U(x,y)-z(x,y)|^{2}_{2,\Omega},\]
where $z(x,y)\in S^W$ is the unique spectral element minimizer of the functional $\mathcal{R}^{W}(\{\mathcal{F}_{u}\})$
in the space of spectral element functions $S^W$ on $\Omega$.

Let $z(x,y)=h(x)k(y)$ and $U(x,y)=\phi(x)w_\epsilon(y)$, where $w_\epsilon(y)=e^{-\frac{y+1}{\epsilon}}$ be as defined in (\ref{eq2.9}),
then
\begin{align}
\|U(x,y)-z(x,y)\|^{2}_{0,\Omega}&=\left\|\phi(x)w_\epsilon(y)-h(x)k(y)\right\|^{2}_{0,\Omega}\notag\\
&=\left\|\phi(x)w_\epsilon(y)-\phi(x)k(y)+\phi(x)k(y)-h(x)k(y)\right\|^{2}_{0,\Omega}.\notag
\end{align}
Let $f_1(x,y)=\phi(x)\left(w_\epsilon(y)-k(y)\right)$ and $f_2(x,y)=k(y)\left(\phi(x)-h(x)\right)$, then
\begin{align}
\|U(x,y)-z(x,y)\|^{2}_{0,\Omega}&=\left\|f_1+f_2\right\|^{2}_{0,\Omega}\leq 2\left\|f_1\right\|^{2}_{0,\Omega}+2\left\|f_2\right\|^{2}_{0,\Omega}.\notag
\end{align}
Using Fubini's theorem in the two terms on RHS, we get
\begin{align}\label{eq6.2}
\|U(x,y)-z(x,y)\|_{0,\Omega}^2&=2\|\phi\|_{0,I}^2\left\|w_\epsilon-k\right\|_{0,I}^2+2\|k\|_{0,I}^2\left\|\phi-h\right\|_{0,I}^2.
\end{align}
In the same way, we have
\begin{align}\label{eq6.3}
|U(x,y)-z(x,y)|^{2}_{1,\Omega}&=\left\|\frac{\partial U}{\partial x}-\frac{\partial z}{\partial x}\right\|^{2}_{0,\Omega}
+\left\|\frac{\partial U}{\partial y}-\frac{\partial z}{\partial y}\right\|^{2}_{0,\Omega}\notag\\
&=\left\|\phi^\prime(x)w_\epsilon(y)-h^\prime(x)k(y)\right\|_{0,\Omega}^2+\left\|\phi(x)w^\prime_\epsilon(y)-h(x)k^\prime(y)\right\|^{2}_{0,\Omega}\notag\\
&=\left\|\phi^\prime(x)w_\epsilon(y)-\phi^\prime(x)k(y)+\phi^\prime(x)k(y)-h^\prime(x)k(y)\right\|_{0,\Omega}^2\notag\\
&+\left\|\phi(x)w^\prime_\epsilon(y)-\phi(x) k^\prime(y)+\phi(x)k^\prime(y)-h(x)k^\prime(y)\right\|^{2}_{0,\Omega}\notag\\
&\leq2\|\phi^\prime\|_{0,I}^2\|w_{\epsilon}-k\|_{0,I}^2+2\|k\|_{0,I}^2\|\phi^{\prime}-h^{\prime}\|_{0,I}^2\notag\\
&+2\|\phi\|_{0,I}^2\|w_{\epsilon}^{\prime}-k^{\prime}\|_{0,I}^2+2\|k^{\prime}\|_{0,I}^2\|\phi-h\|_{0,I}^2,
\end{align}
and
\begin{align}\label{eq6.4}
|U(x,y)-z(x,y)|^{2}_{2,\Omega}&=\left\|\frac{\partial^2 U}{\partial x^2}-\frac{\partial^2 z}{\partial x^2}\right\|^{2}_{0,\Omega}
+\left\|\frac{\partial^2 U}{\partial y^2}-\frac{\partial^2 z}{\partial y^2}\right\|^{2}_{0,\Omega}+\left\|\frac{\partial^2 U}{\partial x \partial y}-\frac{\partial^2 z}{\partial x \partial y}\right\|^{2}_{0,\Omega}\notag\\
&=\left\|\phi^{\prime\prime}(x)w_\epsilon(y)-h^{\prime\prime}(x)k(y)\right\|_{0,\Omega}^2+\left\|\phi(x)w^{\prime\prime}_\epsilon(y)-h(x)k^{\prime\prime}(y)\right\|^{2}_{0,\Omega}\notag\\
&+\left\|\phi^{\prime}(x)w_\epsilon^{\prime}(y)-h^{\prime}(x)k^{\prime}(y)\right\|_{0,\Omega}^2\notag\\
&=\left\|\phi^{\prime\prime}(x)w_\epsilon(y)-\phi^{\prime\prime}(x)k(y)+\phi^{\prime\prime}(x)k(y)-h^{\prime\prime}(x)k(y)\right\|_{0,\Omega}^2\notag\\
&+\left\|\phi(x)w^{\prime\prime}_\epsilon(y)-\phi(x) k^{\prime\prime}(y)+\phi(x)k^{\prime\prime}(y)-h(x)k^{\prime\prime}(y)\right\|^{2}_{0,\Omega}\notag\\
&+\left\|\phi^{\prime}(x)w_\epsilon^{\prime}(y)-\phi^{\prime}(x)k^{\prime}(y)+\phi^{\prime}(x)k^{\prime}(y)-h^{\prime}(x)k^{\prime}(y)\right\|_{0,\Omega}^2\notag\\
&\leq2\|\phi^{\prime\prime}\|_{0,I}^2\|w_{\epsilon}-k\|_{0,I}^2+2\|k\|_{0,I}^2\|\phi^{\prime\prime}-h^{\prime\prime}\|_{0,I}^2\notag\\
&+2\|\phi\|_{0,I}^2\|w_{\epsilon}^{\prime\prime}-k^{\prime\prime}\|_{0,I}^2+2\|k^{\prime\prime}\|_{0,I}^2\|\phi-h\|_{0,I}^2\notag\\
&+2\|\phi^{\prime}\|_{0,I}^2\|w_{\epsilon}^{\prime}-k^{\prime}\|_{0,I}^2+2\|k^{\prime}\|_{0,I}^2\|\phi^{\prime}-h^{\prime}\|_{0,I}^2.
\end{align}
Let $h(x)\in\Pi_W(I)$ be the $H^2$ projection of $\phi(x)$ onto $\Pi_W(I)$, and let
\begin{align}\label{eq6.5}
\phi_m^W := \left\|\frac{d^m}{dx^m}\left(\phi(x)-h(x)\right)\right\|_{0,I}=\left\| \phi^{(m)} - h^{(m)} \right\|_{0,I}.
\end{align}
denotes the projection error in the $H^m$-seminorm for $0\leq m\leq 2$. Moreover, for any norm $\|\cdot\|$ (and $0\leq m\leq 2$) using the triangle inequality, we have
\begin{align}\label{eq6.6}
\left\|\frac{d^m k}{dy^m}\right\|\leq \left\|\frac{d^m w_\epsilon}{dy^m}\right\|+\left\|\frac{d^m k}{dy^m}-\frac{d^m w_\epsilon}{dy^m}\right\|.
\end{align}
Combining (\ref{eq6.2}$)-$(\ref{eq6.4}) and using (\ref{eq6.5}, (\ref{eq6.6}) we obtain
\begin{align}\label{eq6.7}
\|U(x,y)-z(x,y)\|^{2}_{2,\epsilon,\Omega}&=\epsilon^4|U(x,y)-z(x,y)|^{2}_{2,\Omega}+\epsilon^2|U(x,y)-z(x,y)|^{2}_{1,\Omega}+\|U(x,y)-z(x,y)\|^{2}_{0,\Omega}\notag\\
&\leq2\epsilon^4\|\phi^{\prime\prime}\|_{0,I}^2\|w_{\epsilon}-k\|_{0,I}^2+2\epsilon^4\left(\|w_\epsilon\|_{0,I}+\left\|w_\epsilon-k\right\|_{0,I}\right)^2\left(\phi_2^W\right)^2\notag\\
&+2\epsilon^4\|\phi\|_{0,I}^2\|w_{\epsilon}^{\prime\prime}-k^{\prime\prime}\|_{0,I}^2+2\epsilon^4\left(\|w^{\prime\prime}_\epsilon\|_{0,I}+\left\|w^{\prime\prime}_\epsilon-k^{\prime\prime}\right\|_{0,I}\right)^2\left(\phi_0^W\right)^2\notag\\
&+2\epsilon^4\|\phi^{\prime}\|_{0,I}^2\|w_{\epsilon}^{\prime}-k^{\prime}\|_{0,I}^2+2\epsilon^4\left(\|w^{\prime}_\epsilon\|_{0,I}+\left\|w^{\prime}_\epsilon-k^{\prime}\right\|_{0,I}\right)^2\left(\phi_1^W\right)^2\notag\\
&+2\epsilon^2\|\phi^\prime\|_{0,I}^2\|w_{\epsilon}-k\|_{0,I}^2+2\epsilon^2\left(\|w_\epsilon\|_{0,I}+\left\|w_\epsilon-k\right\|_{0,I}\right)^2\left(\phi_1^W\right)^2\notag\\
&+2\epsilon^2\|\phi\|_{0,I}^2\|w_{\epsilon}^{\prime}-k^{\prime}\|_{0,I}^2+2\epsilon^2\left(\|w^{\prime}_\epsilon\|_{0,I}+\left\|w^{\prime}_\epsilon-k^{\prime}\right\|_{0,I}\right)^2\left(\phi_{0}^W\right)^2\notag\\
&+2\|\phi\|_{0,I}^2\left\|w_\epsilon-k\right\|_{0,I}^2+2\left(\|w_\epsilon\|_{0,I}+\left\|w_\epsilon-k\right\|_{0,I}\right)^2\left(\phi_0^W\right)^2\notag\\
&\leq C \Big[\left( \|\phi\|_{0,I}^2 + \epsilon^2 \|\phi^{\prime}\|_{0,I}^2 + \epsilon^4 \|\phi^{\prime\prime}\|_{0,I}^2 \right) \|w_\epsilon-k\|_{2,I}^2 \notag\\
&+\left(\left(\phi_{0}^W\right)^2 + \epsilon^2 \left(\phi_{1}^W\right)^2 + \epsilon^4 \left(\phi_{2}^W\right)^2 \right)\left(\|w_\epsilon\|_{0,I}+\left\|w_\epsilon-k\right\|_{0,I}\right)^2 \notag\\
&+ \left(\epsilon^2 \left(\phi_{0}^W\right)^2 + \epsilon^4 \left(\phi_{1}^W\right)^2\right) \left(\|w^{\prime}_\epsilon\|_{0,I}+\left\|w^{\prime}_\epsilon-k^{\prime}\right\|_{0,I}\right)^2 \notag \\
&+\epsilon^4 \left(\phi_0^W\right)^2 \left(\|w^{\prime\prime}_\epsilon\|_{0,I}+\left\|w^{\prime\prime}_\epsilon-k^{\prime\prime}\right\|_{0,I}\right)^2\Big].
\end{align}
Now,
\[\|\phi\|_{0,I}^2 + \epsilon^2 \|\phi^\prime\|_{0,I}^2 + \epsilon^4 \|\phi^{\prime\prime}\|_{0,I}^2 \leq (1 + \epsilon^2 + \epsilon^4) \|\phi\|_{2,I}^2.\]
Using above estimate and Young's inequality in (\ref{eq6.7}), we obtain
\begin{align}\label{eq6.8}
\|U(x,y)-z(x,y)\|^{2}_{2,\epsilon,\Omega}&\leq C \Big[(1 + \epsilon^2 + \epsilon^4) \|\phi\|_{2,I}^2 \|w_\epsilon - k\|_{2,I}^2 \notag\\
&+\left(\left(\phi_{0}^W\right)^2 + \epsilon^2 \left(\phi_{1}^W\right)^2 + \epsilon^4 \left(\phi_{2}^W\right)^2 \right)\left(\|w_\epsilon\|_{0,I}^2+\left\|w_\epsilon-k\right\|_{0,I}^2\right) \notag\\
&+ \left(\epsilon^2 \left(\phi_{0}^W\right)^2 + \epsilon^4 \left(\phi_{1}^W\right)^2\right) \left(\|w^{\prime}_\epsilon\|_{0,I}^2+\left\|w^{\prime}_\epsilon-k^{\prime}\right\|_{0,I}^2\right) \notag \\
&+\epsilon^4 \left(\phi_0^W\right)^2 \left(\|w^{\prime\prime}_\epsilon\|_{0,I}^2+\left\|w^{\prime\prime}_\epsilon-k^{\prime\prime}\right\|_{0,I}^2\right)\Big].
\end{align}
Rearranging the terms and using the scaled $H^2$ Sobolev norm defined in (\ref{eq2.5}), we get
\begin{align}
\|U(x,y)-z(x,y)\|^{2}_{2,\epsilon,\Omega}& \leq C \Big[(1+\epsilon^2+\epsilon^4) \|\phi\|^2_{2,I} \|w_\epsilon(y) - k(y)\|^2_{2,I}\notag\\
&+\sum_{m=0}^2 \epsilon^{2m} (\phi_m^W)^2 \left(\|w_\epsilon(y)\|_{2,\epsilon,I}^2+\|w_\epsilon(y)-k(y)\|_{2,I}^2\right)\Big].\notag
\end{align}
Now, we have $\|w_\epsilon(y) - k(y)\|^2_{2,I}\leq C\epsilon^{-4}\|w_\epsilon(y) - k(y)\|^2_{2,\epsilon,I}$ and $\|w_\epsilon(y)\|^2_{2,I}\leq C\epsilon^{-4}\|w_\epsilon(y)\|^2_{2,\epsilon,I}$. Therefore,
the above estimate becomes,
\begin{align}\label{eq6.9}
\|U(x,y)-z(x,y)\|^{2}_{2,\epsilon,\Omega}& \leq C \epsilon^{-4}\Big[\|\phi\|^2_{2,I} \|w_\epsilon(y) - k(y)\|^2_{2,\epsilon,I}\notag\\
&+\sum_{m=0}^2 \epsilon^{2m} (\phi_m^W)^2 \left(\|w_\epsilon(y)\|_{2,\epsilon,I}^2+\|w_\epsilon(y)-k(y)\|_{2,\epsilon,I}^2\right)\Big],
\end{align}
where, $C$ is a generic constant independent of $\epsilon$ and $W$. Using approximation results from~\cite{SCHW1} and~\cite{XENO1} (Theorem 2.3.3, Chapter 2),
there exists a constant $C_1$ independent of $\epsilon$ and $W$ such that
\begin{align}\label{eq6.10}
\|w_\epsilon(y)-k(y)\|_{2,\epsilon,I} = \left\|e^{-\frac{(y+1)}{\epsilon}} - k(y)\right\|_{2,\epsilon,(-1,1)}\leq C_1 \frac{\sqrt{\log W}}{W}.
\end{align}
Thus, (\ref{eq6.9}) takes the form
\begin{align}\label{eq6.11}
\|U(x,y)-z(x,y)\|^{2}_{2,\epsilon,\Omega}& \leq C \epsilon^{-4}\Bigg[\|\phi\|^2_{2,I} \frac{\log W}{W^2}+\sum_{m=0}^2 \epsilon^{2m}(\phi_m^W)^2 \left(\|w_\epsilon(y)\|_{2,\epsilon,I}^2+\frac{\log W}{W^2}\right)\Bigg].
\end{align}
Also, $\phi(x)\in H^2(I)$, therefore $\|\phi(x)\|_{2,I}\leq C_2$ for some constant $C_2$ and by standard approximation results
for projection or interpolation of sufficiently smooth functions, the spectral approximation estimate
\begin{align}\label{eq6.12}
\phi_m^W =\left\| \phi^{(m)} - h^{(m)} \right\|_{0,I}\leq CW^{m-s} \|\phi\|_{s,I},\quad 0\leq m\leq s,
\end{align}
holds. From the above, for $s=2$ we get
\[\phi_0^W \leq K_0W^{-2} \|\phi\|_{2,I},\quad \phi_1^W \leq K_1W^{-1} \|\phi\|_{2,I},\quad \text{and}\quad \phi_2^W \leq K_2\|\phi\|_{2,I},\]
for some constants $K_m$, $m=0,1,2$. So that
\begin{align}\label{eq6.13}
\phi_0^W \leq L_0W^{-2},\quad \phi_1^W \leq L_1W^{-1},\quad \text{and}\quad \phi_2^W \leq L_2,
\end{align}
with $L_m=C_2K_m$ for all $0\leq m\leq 2$. Also, it is easy to see that
\begin{align}\label{eq6.14}
\|w_\epsilon(y)\|_{2,\epsilon,I}^2\leq \|w_\epsilon(y)\|_{2,I}^2\leq \frac{3\epsilon}{2}\leq 2.
\end{align}
Using (\ref{eq6.13}), (\ref{eq6.14}) in (\ref{eq6.11}) and noting that $\frac{\log W}{W^2}=O(1)$ for $W$ large enough, we obtain
\begin{align}\label{eq6.15}
\|U(x,y) - z(x,y)\|^2_{2,\epsilon,\Omega}
&\leq C \epsilon^{-4}\Bigg[C_2^2\cdot \frac{\log W}{W^2}+ \left(\frac{L_0^2}{W^{4}} + \epsilon^2 \frac{L_1^2}{W^{2}} + \epsilon^4 L_2^2 \right) \left( 2 + \frac{\log W}{W^2} \right)\Bigg]\notag\\
&\leq C\epsilon^{-4}\left[\frac{\log W}{W^2}+\left(\epsilon^4 +\frac{\epsilon^2}{W^{2}} + \frac{1}{W^4}\right)\left(2+\frac{\log W}{W^2} \right)\right]\notag\\
&\leq C\frac{\log W}{W^2}.
\end{align}
provided $W=O(\frac{1}{\epsilon})$. Taking square root in (\ref{eq6.15}), we get the desired error estimate (\ref{eq6.1}).
\end{proof}

\begin{rem}
The non-conforming spectral element solution be computed by applying PCGM to the normal equations. A subsequent correction produces a conforming solution $z_{\text{cor}}$, for which the error in the scaled $H^1$-norm is exponentially small in $W$. Such corrections are in line with~\cite{DHMU1,Tomar}, and the error estimate
\[\|U(x,y) - z_{\text{cor}}(x,y)\|_{1,\epsilon,\Omega}\leq C \frac{\sqrt{\log W}}{W}\]
holds, where $C$ is a constant independent of $\epsilon$ and $W$ and $z_{\text{cor}}$ is the corrected solution.
\end{rem}

\begin{rem}
The condition $W = \mathcal{O}(1/\epsilon)$ in Theorem \ref{thm6.1} implies that the standard $p$-version on a fixed mesh is inherently not $\epsilon$-uniform. Although the constant $C$ is mathematically independent of $\epsilon$ under this strict asymptotic requirement, resolving the layer as $\epsilon \to 0$ requires an unbounded increase in the polynomial degree $W$. Consequently, computing with a fixed $W$ naturally leads to degraded accuracy for small $\epsilon$. This limitation directly motivates the $rp$-version proposed in Theorem \ref{thm6.2}, which employs boundary layer meshes to recover true parameter-uniform convergence.
\end{rem}

We now study the $rp$-version of the method, i.e., we impose a spectral boundary layer mesh on $\Omega=(-1,1)^2$ by fixing
$N=2$ and vary $W$ on each element simultaneously in the mesh as follows. Define
\[\Omega_1=(-1,1)\times(-1,-1+\tau),\quad \tau=\kappa\epsilon W,\quad \kappa\in\mathbb{R}, \quad \Omega_2=\Omega\setminus\Omega_1\]
so that
\[\Omega=\Omega_1\cup\Omega_2\]
as shown in Figure \ref{fig3}. The parameter $\tau$ is called \emph{transition point} and $\kappa>0$ being a constant independent of $W$
and $\epsilon$ that needs to be chosen carefully.
\begin{figure}[ht]
\centering
\includegraphics[width=0.37\textwidth]{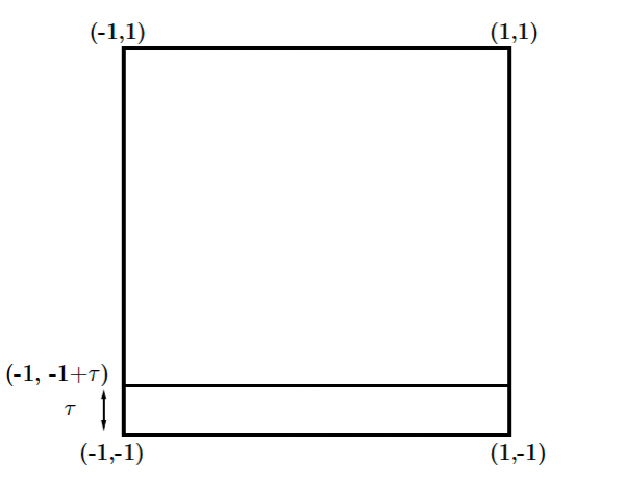}
\caption{A spectral boundary layer mesh on $\Omega=(-1,1)^2$.}
\label{fig3}
\end{figure}
\begin{thm}\label{thm6.2}
Consider the spectral boundary layer mesh on $\Omega=S=(-1,1)^2$ defined by $\Omega_1=(-1,1)\times(-1,\tau),\:\kappa\in\mathbb{R}$,
$\Omega_2=\Omega\setminus\Omega_1$ and let $U(x,y)=w\left(M(\xi,\eta)\right)=\phi(x)e^{-\frac{y+1}{\epsilon}}$ for $(\xi,\eta)\in S$
be the exact solution corresponding to the problem (\ref{eq2.6})-(\ref{eq2.7}). Suppose that $\phi(x)$ is analytic, independent of $\epsilon$ and let
$\mathcal{F}_{z}\in S^W$ minimizes the functional $\mathcal{R}^{W}(\{\mathcal{F}_{u}\})$ over all $\mathcal{F}_{u}\in S^W$. Then there
exist a constant $C$ (independent of $\epsilon$ and $W$) such that the least-squares error estimate
\begin{align}\label{eq6.16}
\|U(x,y)-z(x,y)\|_{2,\epsilon,\Omega}^2\leq C {\epsilon}\alpha^{2W_0},
\end{align}
holds, where $W_0=W+\frac{1}{2}$ and $\alpha(\in\mathbb{R})<1$ is a constant.
\end{thm}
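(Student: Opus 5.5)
The plan is to mirror the proof of Theorem~\ref{thm6.1}, replacing the one-dimensional $p$-approximation of $w_\epsilon(y)=e^{-(y+1)/\epsilon}$ on a single interval by an $rp$-approximation on the two-element boundary layer mesh $\{(-1,-1+\tau),(-1+\tau,1)\}$ in $y$, with $\tau=\kappa\epsilon W$.

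\textbf{Step 1 (reduction to 1D factors).} On each element $\Omega_l$, $l=1,2$, I take the candidate $z|_{\Omega_l}=h(x)k_l(y)$. Here $h$ is the $H^2$ projection of $\phi$ onto $\Pi_W(I)$, and $k_l$ is a polynomial approximation of $w_\epsilon$ on the corresponding $y$-subinterval $I_l$. The splittings (\ref{eq6.2})--(\ref{eq6.4}), with Fubini, then hold elementwise. Summing over $l$ bounds $\|U-z\|^2_{2,\epsilon,\Omega}$ by products of $x$-factors $\phi_m^W$, $\|\phi\|_{2,I}$ and $y$-factors $\|w_\epsilon-k\|_{2,\epsilon,I_l}$, $\|w_\epsilon\|_{2,\epsilon,I_l}$. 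The stability estimate (Theorem~\ref{thm3}), applied on the boundary layer mesh, together with the least-squares minimality of $\mathcal{F}_z$, transfers this approximation bound to the actual minimizer. The jump terms in $\mathcal{R}^W$ are controlled by the same elementwise errors via trace inequalities, as in (\ref{eq5.6}).

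\textbf{Step 2 (the $x$-factor).} Since $\phi$ is analytic, the standard $p$-approximation estimate for analytic functions replaces (\ref{eq6.12}): $\phi_m^W\le C\alpha_1^{W}$ for some $\alpha_1<1$, $m=0,1,2$. It is important that this step is kept unweighted by $\epsilon^{-4}$. I will therefore avoid the crude conversion $\|\cdot\|_{2,I}\le C\epsilon^{-4}\|\cdot\|_{2,\epsilon,I}$ used in (\ref{eq6.9}). Instead I pair each $\epsilon^{2m}$ weight directly with the matching derivative of $w_\epsilon$, using $\epsilon^{2j}\|w_\epsilon^{(j)}\|^2_{0,I}\le C\epsilon$ for $j=0,1,2$. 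This produces the factor $\epsilon$ in (\ref{eq6.16}).

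\textbf{Step 3 (the $y$-factor, the main obstacle).} On $I_1=(-1,-1+\tau)$, map affinely to $(-1,1)$. The function becomes $e^{-\kappa W(t+1)/2}$, entire with $\epsilon$-independent stretched parameter. I use the Gauss--Lobatto/Legendre interpolant, or the result of~\cite{SCHW1} for $rp$ boundary layer meshes. The scaling $\tau=\kappa\epsilon W$ converts the $\epsilon$-weighted norm into an unweighted norm on the reference element times the Jacobian factor $\tau/2\sim\epsilon W$. Combined with the standard bound on Legendre coefficients of $e^{-\beta t}$, $\beta=\kappa W/2$, this gives
\[\|w_\epsilon-k_1\|^2_{2,\epsilon,I_1}\le C\epsilon\,\alpha_2^{2W_0},\]
where $W_0=W+\tfrac12$ arises from the Stirling-type estimate of $\beta^{W}/W!$ and $\alpha_2=\alpha_2(\kappa)<1$ holds once $\kappa$ is chosen suitably small (e.g.\ $\kappa e/4<1$).

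On $I_2$ the layer is already negligible: $|w_\epsilon^{(j)}|\le \epsilon^{-j}e^{-\kappa W}$ there. So I simply take $k_2$ to be the constant (or linear) interpolant and bound the error by $C\epsilon\, e^{-2\kappa W}$. Choosing $\alpha=\max(\alpha_1,\alpha_2,e^{-\kappa})<1$ balances the two elements. I expect the delicate points to be the tension between choosing $\kappa$ small (for the analytic estimate on $I_1$) and large (for the decay on $I_2$), and tracking the $\epsilon$ powers precisely enough to keep exactly one factor $\epsilon$. The case $\tau\ge 2$ (the mesh degenerates to a single element) will be handled by noting $\epsilon\ge 2/(\kappa W)$ there, so the analytic $p$-estimate alone suffices.

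\textbf{Step 4 (assembly).} Inserting Steps 2--3 into the elementwise version of (\ref{eq6.8}) bounds every term either by $C\epsilon\alpha^{2W_0}$, or by $\phi_m^W$-terms times bounded $y$-factors, which are again $\le C\epsilon\alpha^{2W_0}$. Together with the stability/minimality argument of Step 1, this yields (\ref{eq6.16}).
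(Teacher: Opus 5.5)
Your Steps 2--4 are sound. The gap is Step 1, the transfer from your candidate $z^*=h(x)k_l(y)$ to the actual minimizer $\mathcal{F}_z$, which cannot be done with a constant independent of $\epsilon$.

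\textbf{Why Step 1 fails.} Minimality gives $\mathcal{V}^{N,W}(z-z^*)\le 4\,\mathcal{R}^{W}(z^*)$. You then need Theorem~\ref{thm3} to return to the weighted norm, plus a continuity bound of $\mathcal{R}^{W}(z^*)$ by $\|U-z^*\|_{2,\epsilon,\Omega}$. Each of these costs powers of $\epsilon$:
\begin{itemize}
\item The constant in Theorem~\ref{thm3} is $C_\epsilon(\log W)^2$, with $C_\epsilon$ depending on $\epsilon$. It is only claimed under $W=\mathcal{O}(1/\epsilon)$, which is the very restriction Remark~\ref{rem3.1} says the $rp$-version should remove. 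Applying it on a mesh that itself depends on $\epsilon$ and $W$ is also not covered.
\item The jump and boundary terms of $\mathcal{R}^W$ use \emph{unweighted} $H^{1/2}$ and $H^{3/2}$ norms. Bounding them by the weighted error loses a factor like $C'_\epsilon=\mathcal{O}(\epsilon^{-4})$, exactly as in (\ref{eq5.6})--(\ref{eq5.7}).
\item On the thin element $\Omega_1$ of width $\tau=\kappa\epsilon W$, passing between the reference-element form $\mathcal{U}^{N,W}$ and $\|\cdot\|_{2,\epsilon,\Omega_1}$ adds further negative powers of $\tau$ through $M_1$.
\end{itemize}
You would end with something like $C\epsilon^{-p}(\log W)^2\,\epsilon\alpha^{2W_0}$ for some $p>0$. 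The $(\log W)^2$ can be absorbed by enlarging $\alpha$, but $\epsilon^{-p}$ cannot, since for fixed $W$ it is unbounded as $\epsilon\to 0$. An $\epsilon$-uniform result for the minimizer would need an $\epsilon$-robust stability (quasi-optimality) estimate for $\mathcal{R}^W$ on the boundary layer mesh. Nothing in the paper provides one.

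\textbf{Comparison with the paper.} The paper does not close this gap either. It never passes to the minimizer: it sets $z=h(x)k(y)$ with $h,k$ the best $H^2$ approximations and bounds only that product. What it actually proves is therefore a best-approximation estimate.

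Judged as a best-approximation argument, your Steps 2--4 are the more careful version.
\begin{itemize}
\item The paper takes a single $k\in\Pi^W(I)$ on all of $I$ and quotes an exponential bound for it. Such a bound cannot hold uniformly in $\epsilon$ when $\epsilon W\ll 1$, so the two-element mesh never really enters.
\item It bounds the $\phi$-part algebraically in (\ref{eq6.19}) and then appeals to analyticity and $W=\mathcal{O}(1/\epsilon)$ to reach (\ref{eq6.20}).
\end{itemize}
Your version instead uses elementwise $k_1,k_2$ with $\tau=\kappa\epsilon W$; the balance of $(e\kappa/4)^W$ against $e^{-\kappa W}$ is what underlies the paper's $\kappa_0$. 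You also pair exponentially small $\phi_m^W$ with $\epsilon^{2j}\|w_\epsilon^{(j)}\|^2_{0,I}\le\epsilon/2$. Together these give $\inf_v\|U-v\|^2_{2,\epsilon,\Omega}\le C\epsilon\alpha^{2W_0}$ correctly, with no coupling between $W$ and $\epsilon$.

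So either state your result for the best approximation in $S^W$, or keep the minimizer and accept an $\epsilon$-dependent constant.
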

\begin{proof}
Let $h(x)\in\Pi^W(I)$ be the best approximation of $\phi(x)\in H^2(I)$ in the $H^2$-projection sense and let $k(y)\in\Pi^W(I)$ be the
best $H^2$-approximation of $w_\epsilon(y)=e^{-\frac{y+1}{\epsilon}}$ on $I=(-1,1)$. Define
\[h(x,y)=h(x)k(y)\in\Pi^W(I)\otimes\Pi^W(I).\]
Now,
\begin{align}\label{eq6.17}
\|U(x,y)-z(x,y)\|_{2,\epsilon,\Omega}^2&=\|\phi(x)w_\epsilon(y)-h(x)k(y)\|_{2,\epsilon,\Omega}^2 \notag\\
&=\left\|\phi(x)\left(w_\epsilon(y)-k(y)\right)+\left(\phi(x)-h(x)\right)k(y)\right\|_{2,\epsilon,\Omega}^2 \notag\\
&\leq 2\left\|\phi(x)\left(w_\epsilon(y)-k(y)\right)\right\|_{2,\epsilon,\Omega}^2+2\left\|\left(\phi(x)-h(x)\right)k(y)\right\|_{2,\epsilon,\Omega}^2
\end{align}
By exponential approximation results for analytic boundary layer functions (cf. Canuto~\cite{CHQ2} and Schwab~\cite{SCHW}), there exists
a constant $C>0$ such that:
\begin{align*}
\|w_\epsilon(y)-k(y)\|_{m,\Omega}^2\leq C\epsilon\alpha^{2W_0},\quad m=0,1,2.
\end{align*}
Therefore, the first term on RHS in (\ref{eq6.17}) can be rewritten as
\begin{align}\label{eq6.18}
\left\|\phi(x)\left(w_\epsilon(y)-k(y)\right)\right\|_{2,\epsilon,\Omega}^2&\leq C \|\phi(x)\|_{2,I}^2\:\epsilon\alpha^{2W_0}.
\end{align}
As in Theorem~\ref{thm6.1}, the second terms can be written as,
\begin{align}\label{eq6.19}
\left\|\left(\phi(x)-h(x)\right)k(y)\right\|_{2,\epsilon,\Omega}^2&\leq \sum_{m=0}^2 \epsilon^{2m} (\phi_m^W)^2 \:\|k(y)\|_{m,I}^2\notag\\
&\leq C \left(\|\phi\|_{m,I}^2\right)\left(\epsilon^{4}+\epsilon^{2}W^{-2}+W^{-4}\right).
\end{align}
Choosing $W=O(\frac{1}{\epsilon})$, and using the fact that $\phi$ is analytic, it follows that there is a constant $C$ independent
of $\epsilon$ and $W$ such that
\begin{align}\label{eq6.20}
\left\|\left(\phi(x)-h(x)\right)k(y)\right\|_{2,\epsilon,\Omega}^2&\leq C \:\epsilon\alpha^{2W_0}.
\end{align}
Combining (\ref{eq6.18}) and (\ref{eq6.20}), we obtain the result from (\ref{eq6.17}).
\end{proof}
Theorem~\ref{thm6.2} guarantees that in order to resolve boundary layers at a robust exponential rate for the problem (\ref{eq2.6})$-$(\ref{eq2.7})
it is enough to use just two elements of the type $\Omega_1$ and $\Omega_2$ i.e., a thin strip (element) $\Omega_1$ near the boundary layer in
which the solution is approximated with a large polynomial order $W\geq O\left(\frac{1}{\epsilon}\right)$ and a coarse element $\Omega_2$ away
from the boundary in which the solution is approximated by polynomials having order $O(1)$. The solution of (\ref{eq2.6})$-$(\ref{eq2.7}) usually
have other (smooth) components as well (see Chapter 2,~\cite{XENO1}). To approximate these smooth components in the solution at an exponential
rate, the mesh degree combination used in Theorem~\ref{thm6.2} is not sufficient and needs to be replaced by increasing the polynomial order $W$ or
by dividing $\Omega_2$ into smaller elements so that the smooth components in the solution are approximated at a sufficiently fast rate. As such the
overall convergence rate will be exponential only if the smooth components are also approximated at an exponential rate~\cite{XENO1}. It is shown
in~\cite{XENO1} (Theorem 2.5.2) and in~\cite{SCHW1} (Theorem 5.2) that this is possible to obtain if $f$ is a polynomial.
 
Hence, exponential convergence is achieved uniformly in $\epsilon$ using a simple two-element boundary layer mesh. This mesh is easier to implement than
a general $hp$-mesh, and moreover, performing computational experiments with more elements, we can not obtain better convergence rates
(see~\cite{XENO2}). The mesh-degree combination used by us is similar to the optimal mesh-degree combination proposed in~\cite{SCHE,SCHW1,XENO1}.

Since, $W=O(\frac{1}{\epsilon})$ therefore let us take $W=\frac{\beta}{\epsilon}$ for some constant $\beta>0$. Since $0<\alpha<1$ therefore
we can take $\alpha=e^{-\lambda}$, with $\lambda>0$. Now compute
\[\alpha^{2W_0}=\left(e^{-\lambda}\right)^{2W_0}=e^{-2\lambda W_0}.\]
But $W_0=\frac{\beta}{\epsilon}+\frac{1}{2}$, therefore $\alpha^{2W_0}=e^{\frac{2\lambda \beta}{\epsilon}}e^{-\lambda}=O\left(e^{-\frac{\gamma}{\epsilon}}\right),\quad \gamma=2\lambda\beta$.
Now, (\ref{eq6.16}) takes the form
\begin{align*}
\|U(x,y)-z(x,y)\|_{2,\epsilon,\Omega}^2\leq C {\epsilon}e^{-\frac{\gamma}{\epsilon}}.
\end{align*}
This states that the error decays \emph{faster than any polynomial in $\epsilon$}.
\begin{rem}
The error estimate in Theorem~\ref{thm6.2} is obtained using polynomials of degree $O(1)$ in the element $\Omega_2$ and the estimate remains valid
if $\Omega_2$ is divided into further smaller elements and polynomials of degree $O(1)$ are used.
\end{rem}
\begin{rem}
It is shown in~\cite{SCHW1,XENO1} that the constant $\alpha$ appearing in the Theorem~\ref{thm6.2} is given by
\begin{align}\label{eq6.21}
\alpha=\min\left\{\frac{e}{2\epsilon W_0},\max\left(\frac{\kappa\epsilon}{4},e^{-(\kappa-\delta)}\right)\right\},
\end{align}
with $\delta>\frac{\log W}{2W}$ and $0<\kappa_0\leq\kappa<\frac{4}{e}$.
\end{rem}
\begin{rem}
It is also known~\cite{SCHW1,XENO1} that the constant $\alpha$ in Theorem~\ref{thm6.2} may be chosen so that $\kappa_0 e=\frac{4}{e^{\kappa_0}}$,
which gives $\kappa_0\approx0.71$. With this choice, $\alpha$ takes the value $\alpha\approx e^{-\kappa_0}$ in (\ref{eq6.21}), which simplifies the error bound in
(\ref{eq6.16}) when only two elements are used in the mesh. However, this value of $\kappa_0$ is not optimal as reflected in numerical results
(in Section~\ref{sec7}). We also refer to the section on numerical results in~\cite{SCHW1} for more details on optimal choice of $\kappa_0$.
\end{rem}

\section{Numerical results}\label{sec7}

The spectral element approximation resulting from the minimization problem is denoted by $u_{\text{sem}}$, and the exact solution of the problem is denoted by $u_{\text{ex}}$. The percentage relative error in the parameter-weighted $H^2_{\epsilon}$-norm (as defined in Eq.~\ref{eq2.5}) is given by

\[\|E\|_{\text{rel}} =\frac{\|u_{\text{sem}}-u_{\text{ex}}\|_{2,\epsilon,\Omega}}{\|u_{\text{ex}}\|_{2,\epsilon,\Omega}}\times 100 \%.\]

All the numerical simulations reported in this section were executed with codes developed in \texttt{FORTRAN-90} on a high-performance computing system consisting of six interconnected nodes equipped with Intel Ivy Bridge processors, amounting to 120 cores in total. The detailed configuration of the cluster is as follows: number of physical CPUs – 12, cores per CPU – 10, total CPU cores – 120, and RAM – 96~GB. In all examples, each element of the discretized domain is assigned to a unique processor. The Message Passing Interface (MPI) library is used for inter-processor communication. To demonstrate the exponential rate of convergence, the relative error is plotted on a semi-logarithmic scale. In all examples presented in this section, four different discretizations of the domain $\Omega = (-1,1)^2$ are used, as illustrated in Figure~\ref{fig:mesh-config}.

\begin{figure}[ht]
\centering
\subfigure[]{
\includegraphics[width=0.23\textwidth]{fig1.eps}
\label{fig:mesh-config-a}
}
\hspace{-0.25cm}
\subfigure[]{
\includegraphics[width=0.23\textwidth]{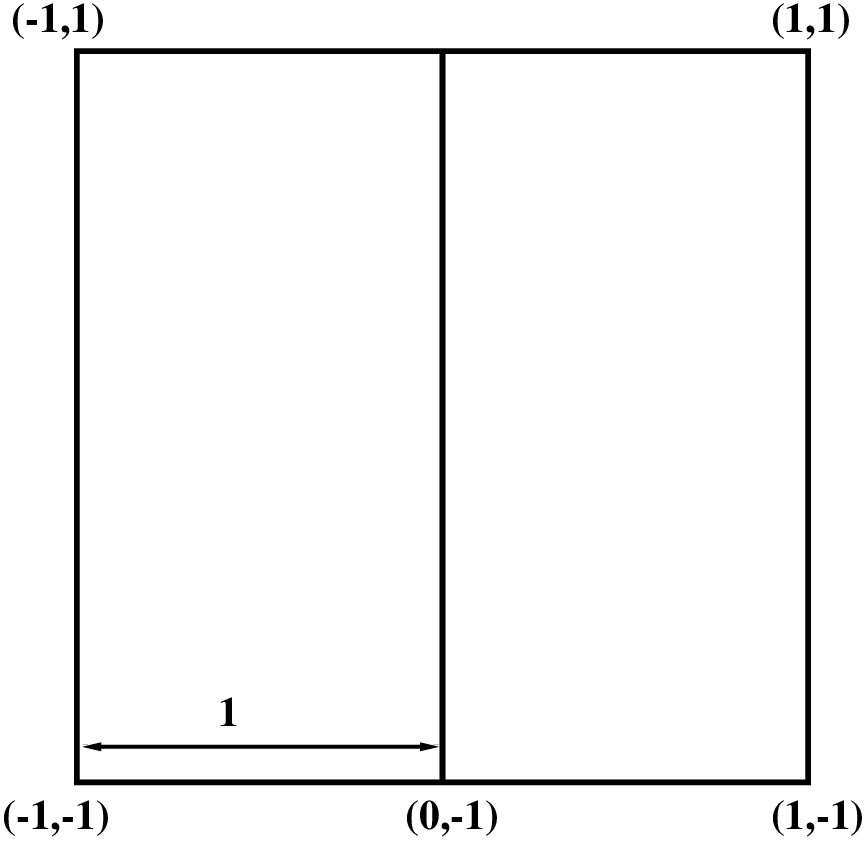}
\label{fig:mesh-config-b}
}
\hspace{-0.25cm}
\subfigure[]{
\includegraphics[width=0.23\textwidth]{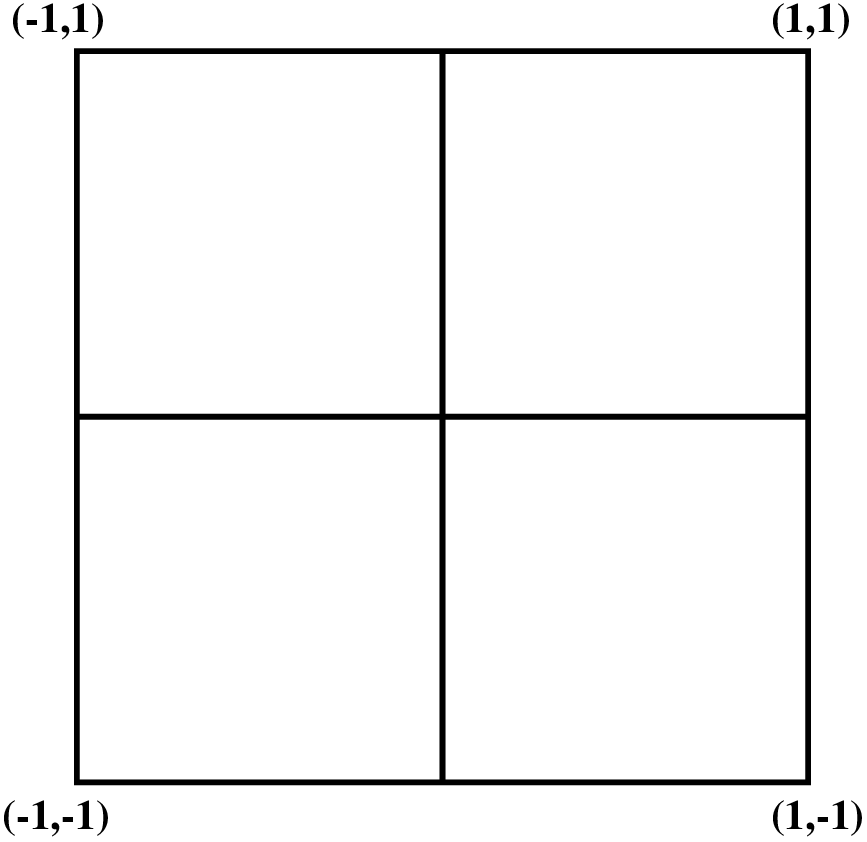}
\label{fig:mesh-config-c}
}
\hspace{-0.25cm}
\subfigure[]{
\includegraphics[width=0.23\textwidth]{fig4.eps}
\label{fig:mesh-config-d}
}
\caption[]{Decomposition of the square domain $\Omega = [-1,1]^2$ into \subref{fig:mesh-config-a} 1 element,
\subref{fig:mesh-config-b} 2 elements, \subref{fig:mesh-config-c} 4 elements and \subref{fig:mesh-config-d} 9 elements.}
\label{fig:mesh-config}
\end{figure}

We execute the Preconditioned Conjugate Gradient Method (PCGM) until the stopping criterion is satisfied, which requires that the relative norm of the residual vector associated with the normal equations is required to be smaller than a prescribed tolerance, denoted by $\text{tol}$. To meet this criterion, approximately 
$O\left(\frac{\sqrt{\kappa}}{2} \left|\log\left(\frac{2}{\text{tol}}\right)\right|\right)$ PCGM iterations are needed, where $\kappa$ denotes the condition number of the preconditioned system. This implies that achieving an approximate solution with accuracy $O\left(\frac{\sqrt{\log W}}{W}\right)$ requires $O\left((\log W)^2\right)$ iterations. Each iteration of the PCGM incurs a computational cost of $O(W^2)$ when executed on a parallel computer equipped with $O(W)$ processors.
 
Therefore, the total computational time required to achieve an overall solution accuracy of $O\left(\frac{\log W}{W^2}\right)$ is $O\left(W^2(\log W)^2\right)$ on such a parallel system.

\begin{example}[\textbf{Boundary layers on the left and bottom edges with a corner layer}]\label{examp1}
\end{example}

Considers the boundary layer problem~(\ref{eq2.6})--(\ref{eq2.7}) on \( \Omega = (0,1)^2 \) with \( a = 0 \), where the source term \( f(x,y) \) is chosen such that the exact solution is
\begin{align*}
w_\epsilon(x,y) = x^3(1 + y^2) + \sin(\pi x^2) + \cos\left(\frac{\pi y}{2}\right) + (1 + x + y) \left( e^{-2x/\epsilon} + e^{-2y/\epsilon} \right).
\end{align*}
This solution exhibits boundary layers along the left (\( x=0 \)) and bottom (\( y=0 \)) edges with corner layer at the origin \( (0,0) \)~\cite{NMM} (see Figure~\ref{Soln_Ex1}) .

\begin{figure}[h!]
\centering
\includegraphics[width=1.0\textwidth]{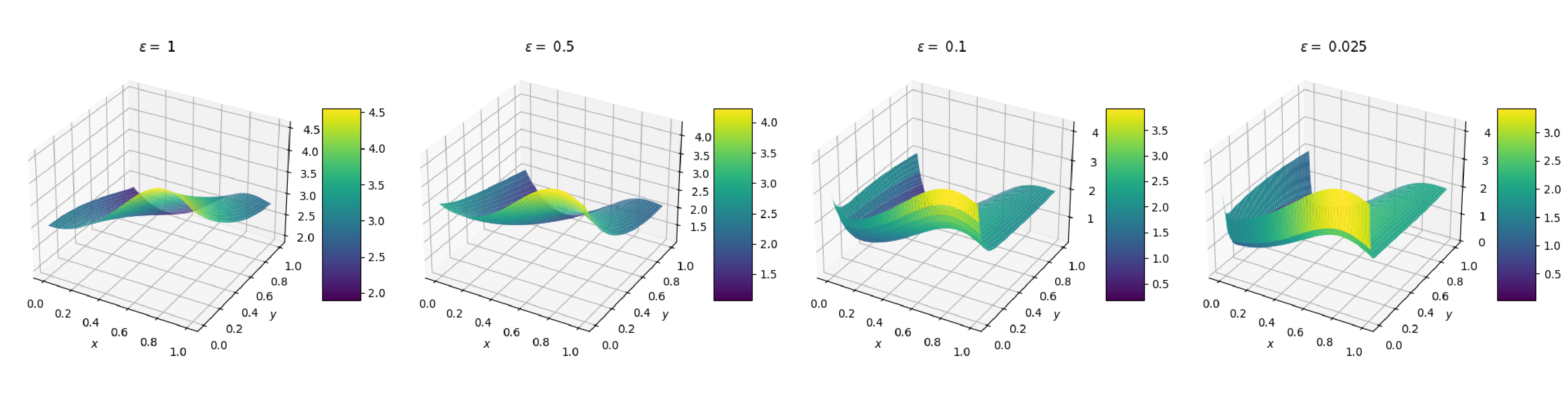}
\caption{Exact solution (Example~\ref{examp1}).}
\label{Soln_Ex1}
\end{figure}

We apply $p$-version of the method on uniform meshes with 1, 2, 4, and 9 elements, and investigate the convergence behavior on a semi $\log$-scale for various boundary layer parameters \( \epsilon \in \{1, 0.5, 0.1, 0.075, 0.025, 0.01\} \) with polynomial degrees \( W = 2, 4, \ldots, 32 \). The results are presented in Tables~\ref{ex1_ele1}--\ref{ex1_ele9} while the resulting error curves are plotted in Figure~\ref{fig:ex1}.

From Figure~\ref{fig:ex1}, we observe that exponential convergence is achieved for larger values of \( \epsilon \), especially on finer meshes. However, as \( \epsilon \) decreases, the presence of sharp boundary layer makes the problem increasingly challenging. On a single-element mesh (Figure~\ref{fig:ex1}(a)), the method fails to resolve the layers for small \( \epsilon \), resulting in saturation or even increase in error. With two and four elements (Figures~\ref{fig:ex1}(b)--(c)), performance improves moderately, although the resolution remains inadequate for capturing sharp boundary layers. On a 9-element mesh (Figure~\ref{fig:ex1}(d)), significant improvement is seen, particularly for moderate \( \epsilon \). For \( \epsilon \leq 0.025 \), however, the convergence rate remains suboptimal, indicating the necessity of mesh adaptation or $rp$-refinement for accurately capturing the layer behavior.

\begin{table}[ht!]
\centering
\caption{Relative error for various scales of $\epsilon$ for Example~\ref{examp1} (on a single element)}
\label{ex1_ele1}
\begin{tabular}{c | c c c c c c}
\toprule
\textbf{ $W$} & \boldmath{$\epsilon = 1$} & \boldmath{$\epsilon = 0.5$} & \boldmath{$\epsilon = 0.1$} & \boldmath{$\epsilon = 0.075$} & \boldmath{$\epsilon = 0.025$} & \boldmath{$\epsilon = 0.01$} \\
\midrule
2  & $1.68\mathrm{E}{+}02$ & $8.81\mathrm{E}{+}00$ & $4.68\mathrm{E}{+}01$ & $4.84\mathrm{E}{+}01$ & $5.06\mathrm{E}{+}01$ & $5.08\mathrm{E}{+}01$ \\
4  & $4.26\mathrm{E}{+}01$ & $1.88\mathrm{E}{+}01$ & $4.95\mathrm{E}{+}01$ & $7.00\mathrm{E}{+}01$ & $2.37\mathrm{E}{+}02$ & $8.08\mathrm{E}{+}01$ \\
8  & $1.75\mathrm{E}{+}00$ & $7.18\mathrm{E}{-}01$ & $6.53\mathrm{E}{+}00$ & $1.26\mathrm{E}{+}01$ & $6.03\mathrm{E}{+}01$ & $2.22\mathrm{E}{+}02$ \\
16 & $1.10\mathrm{E}{-}05$ & $5.81\mathrm{E}{-}06$ & $3.10\mathrm{E}{-}03$ & $3.23\mathrm{E}{-}02$ & $5.50\mathrm{E}{+}00$ & $3.22\mathrm{E}{+}01$ \\
24 & $9.98\mathrm{E}{-}09$ & $5.24\mathrm{E}{-}09$ & $4.72\mathrm{E}{-}07$ & $2.79\mathrm{E}{-}06$ & $1.62\mathrm{E}{-}01$ & $5.91\mathrm{E}{+}00$ \\
32 & $4.61\mathrm{E}{-}08$ & $2.32\mathrm{E}{-}08$ & $1.34\mathrm{E}{-}06$ & $1.85\mathrm{E}{-}06$ & $1.18\mathrm{E}{-}03$ & $7.68\mathrm{E}{-}01$ \\
\bottomrule
\end{tabular}
\end{table}
\begin{table}[ht!]
\centering
\caption{Relative error for various scales of $\epsilon$ for Example~\ref{examp1} (on 2 elements)}
\label{ex1_ele2}
\begin{tabular}{c | c c c c c c}
\toprule
\textbf{ $W$} & \boldmath{$\epsilon = 1$} & \boldmath{$\epsilon = 0.5$} & \boldmath{$\epsilon = 0.1$} & \boldmath{$\epsilon = 0.075$} & \boldmath{$\epsilon = 0.025$} & \boldmath{$\epsilon = 0.01$} \\
\midrule
2  & $2.16\mathrm{E}{+}01$ & $9.46\mathrm{E}{+}00$ & $5.89\mathrm{E}{+}01$ & $6.81\mathrm{E}{+}01$ & $5.85\mathrm{E}{+}01$ & $5.86\mathrm{E}{+}01$ \\
4  & $8.42\mathrm{E}{+}00$ & $4.36\mathrm{E}{+}00$ & $3.73\mathrm{E}{+}01$ & $5.45\mathrm{E}{+}01$ & $1.29\mathrm{E}{+}02$ & $1.29\mathrm{E}{+}02$ \\
8  & $8.78\mathrm{E}{-}03$ & $5.13\mathrm{E}{-}03$ & $4.51\mathrm{E}{+}00$ & $8.73\mathrm{E}{+}00$ & $4.57\mathrm{E}{+}01$ & $1.82\mathrm{E}{+}02$ \\
16 & $6.21\mathrm{E}{-}08$ & $5.28\mathrm{E}{-}08$ & $2.14\mathrm{E}{-}03$ & $2.25\mathrm{E}{-}02$ & $3.91\mathrm{E}{+}00$ & $2.39\mathrm{E}{+}01$ \\
24 & $2.58\mathrm{E}{-}08$ & $2.35\mathrm{E}{-}08$ & $4.21\mathrm{E}{-}06$ & $3.37\mathrm{E}{-}06$ & $1.15\mathrm{E}{-}01$ & $4.28\mathrm{E}{+}00$ \\
32 & $1.34\mathrm{E}{-}07$ & $6.50\mathrm{E}{-}08$ & $7.96\mathrm{E}{-}06$ & $1.60\mathrm{E}{-}05$ & $6.57\mathrm{E}{-}03$ & $5.58\mathrm{E}{-}01$  \\
\bottomrule
\end{tabular}
\end{table}
\begin{table}[ht!]
\centering
\caption{Relative error for various scales of $\epsilon$ for Example~\ref{examp1} (on 4 elements)}
\label{ex1_ele4}
\begin{tabular}{c | c c c c c c}
\toprule
\textbf{ $W$} & \boldmath{$\epsilon = 1$} & \boldmath{$\epsilon = 0.5$} & \boldmath{$\epsilon = 0.1$} & \boldmath{$\epsilon = 0.075$} & \boldmath{$\epsilon = 0.025$} & \boldmath{$\epsilon = 0.01$} \\
\midrule
2  & $2.04\mathrm{E}{+}01$ & $9.50\mathrm{E}{+}00$ & $7.25\mathrm{E}{+}01$ & $7.90\mathrm{E}{+}01$ & $7.15\mathrm{E}{+}01$ & $7.13\mathrm{E}{+}01$ \\
4  & $9.83\mathrm{E}{+}00$ & $4.24\mathrm{E}{+}00$ & $1.87\mathrm{E}{+}01$ & $2.73\mathrm{E}{+}01$ & $1.19\mathrm{E}{+}02$ & $8.91\mathrm{E}{+}01$ \\
8  & $9.81\mathrm{E}{-}03$ & $4.59\mathrm{E}{-}03$ & $5.47\mathrm{E}{-}01$ & $1.62\mathrm{E}{+}00$ & $1.87\mathrm{E}{+}01$ & $1.13\mathrm{E}{+}02$ \\
16 & $7.84\mathrm{E}{-}08$ & $4.48\mathrm{E}{-}08$ & $2.66\mathrm{E}{-}06$ & $6.00\mathrm{E}{-}05$ & $3.04\mathrm{E}{-}01$ & $6.28\mathrm{E}{+}00$ \\
24 & $3.76\mathrm{E}{-}08$ & $2.83\mathrm{E}{-}08$ & $2.02\mathrm{E}{-}06$ & $1.55\mathrm{E}{-}05$ & $2.01\mathrm{E}{-}03$ & $3.83\mathrm{E}{-}01$ \\
32 & $1.85\mathrm{E}{-}07$ & $9.34\mathrm{E}{-}08$ & $5.89\mathrm{E}{-}06$ & $4.18\mathrm{E}{-}05$ & $1.62\mathrm{E}{-}03$ & $1.02\mathrm{E}{-}01$ \\
\bottomrule
\end{tabular}
\end{table}
\begin{table}[ht!]
\centering
\caption{Relative error for various scales of $\epsilon$ for Example~\ref{examp1} (on 9 elements)}
\label{ex1_ele9}
\begin{tabular}{c | c c c c c c}
\toprule
\textbf{ $W$} & 
\boldmath{$\epsilon = 1$} & \boldmath{$\epsilon = 0.5$} & \boldmath{$\epsilon = 0.1$} & \boldmath{$\epsilon = 0.075$} & \boldmath{$\epsilon = 0.025$} & \boldmath{$\epsilon = 0.01$} \\
\midrule
2  & $1.74\mathrm{E}{+}01$ & $9.08\mathrm{E}{+}00$ & $6.74\mathrm{E}{+}01$ & $7.27\mathrm{E}{+}01$ & $7.89\mathrm{E}{+}01$ & $7.85\mathrm{E}{+}01$ \\
4  & $2.56\mathrm{E}{+}00$ & $9.93\mathrm{E}{-}01$ & $8.42\mathrm{E}{+}00$ & $1.38\mathrm{E}{+}01$ & $1.04\mathrm{E}{+}02$ & $9.20\mathrm{E}{+}01$ \\
8  & $7.66\mathrm{E}{-}04$ & $3.68\mathrm{E}{-}04$ & $7.46\mathrm{E}{-}02$ & $2.82\mathrm{E}{-}01$ & $9.15\mathrm{E}{+}00$ & $9.68\mathrm{E}{+}01$ \\
16 & $1.04\mathrm{E}{-}07$ & $6.72\mathrm{E}{-}08$ & $3.87\mathrm{E}{-}06$ & $8.32\mathrm{E}{-}06$ & $2.05\mathrm{E}{-}02$ & $1.68\mathrm{E}{+}00$ \\
24 & $8.40\mathrm{E}{-}08$ & $4.69\mathrm{E}{-}08$ & $7.70\mathrm{E}{-}06$ & $1.53\mathrm{E}{-}05$ & $7.78\mathrm{E}{-}04$ & $6.30\mathrm{E}{-}02$ \\
32 & $1.85\mathrm{E}{-}07$ & $2.09\mathrm{E}{-}07$ & $5.66\mathrm{E}{-}05$ & $2.56\mathrm{E}{-}05$ & $6.73\mathrm{E}{-}03$ & $2.37\mathrm{E}{-}01$  \\
\bottomrule
\end{tabular}
\end{table}

These results confirm that although the $p$-version achieves exponential convergence for smooth solutions, layer problems demand mesh refinement or layer-adapted techniques to accurately capture sharp gradients.

\begin{figure}[ht!]
\centering
\subfigure[]{
\includegraphics[width=0.50\textwidth]{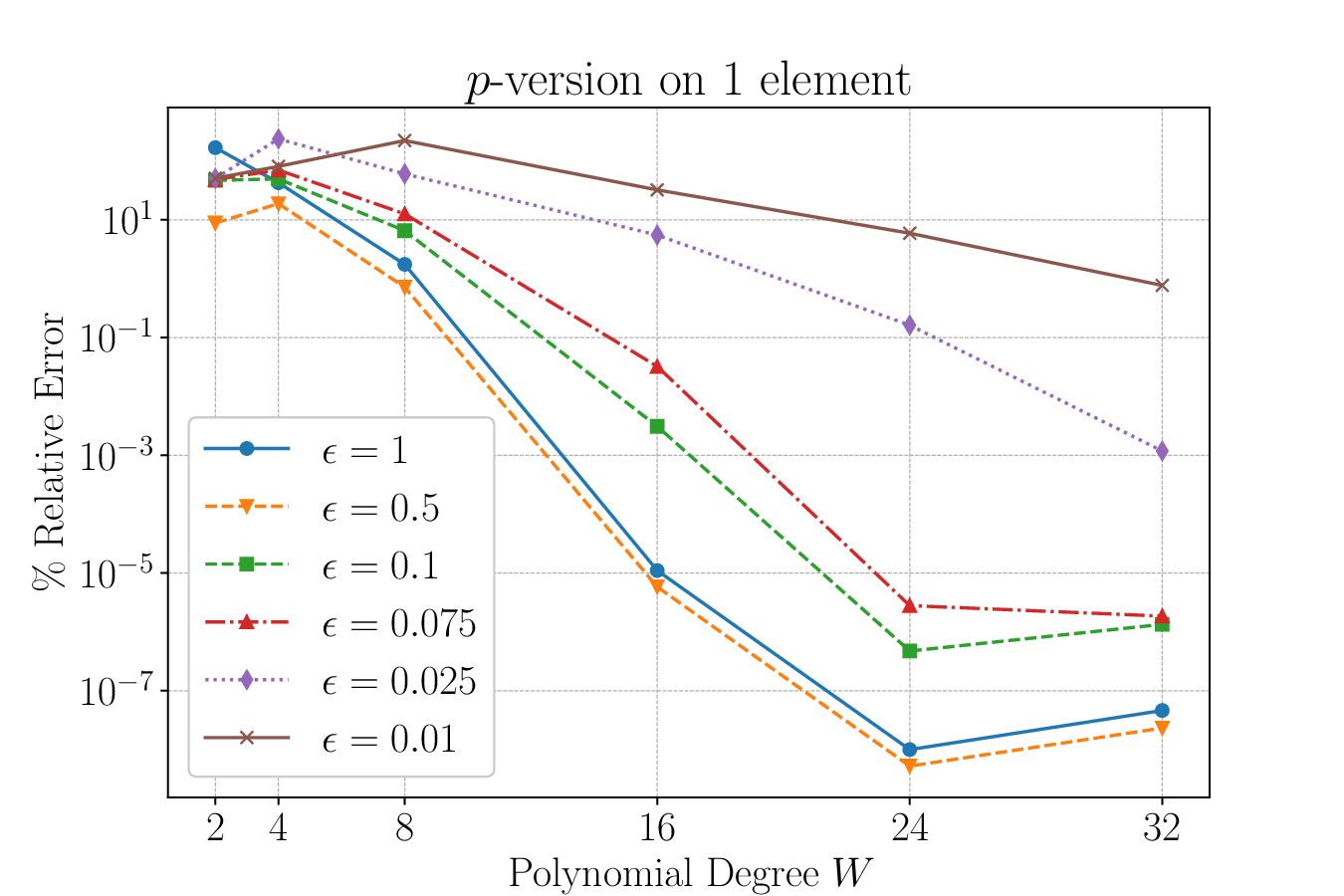}
\label{fig:ex1-a}
}
\hspace{-1.3cm}
\subfigure[]{
\includegraphics[width=0.50\textwidth]{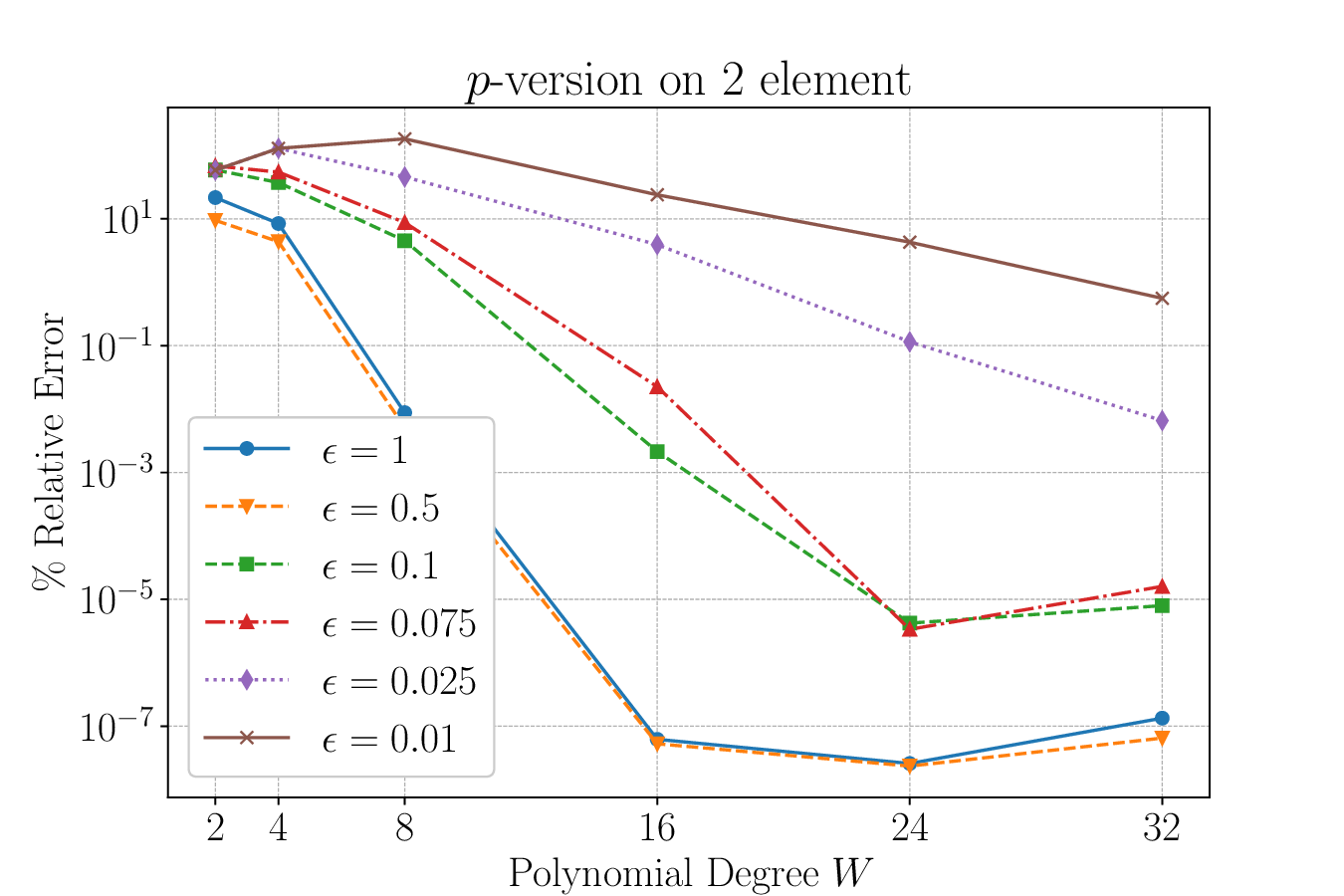}
\label{fig:ex1-b}
}
\hspace{0.5cm}
\subfigure[]{
\includegraphics[width=0.50\textwidth]{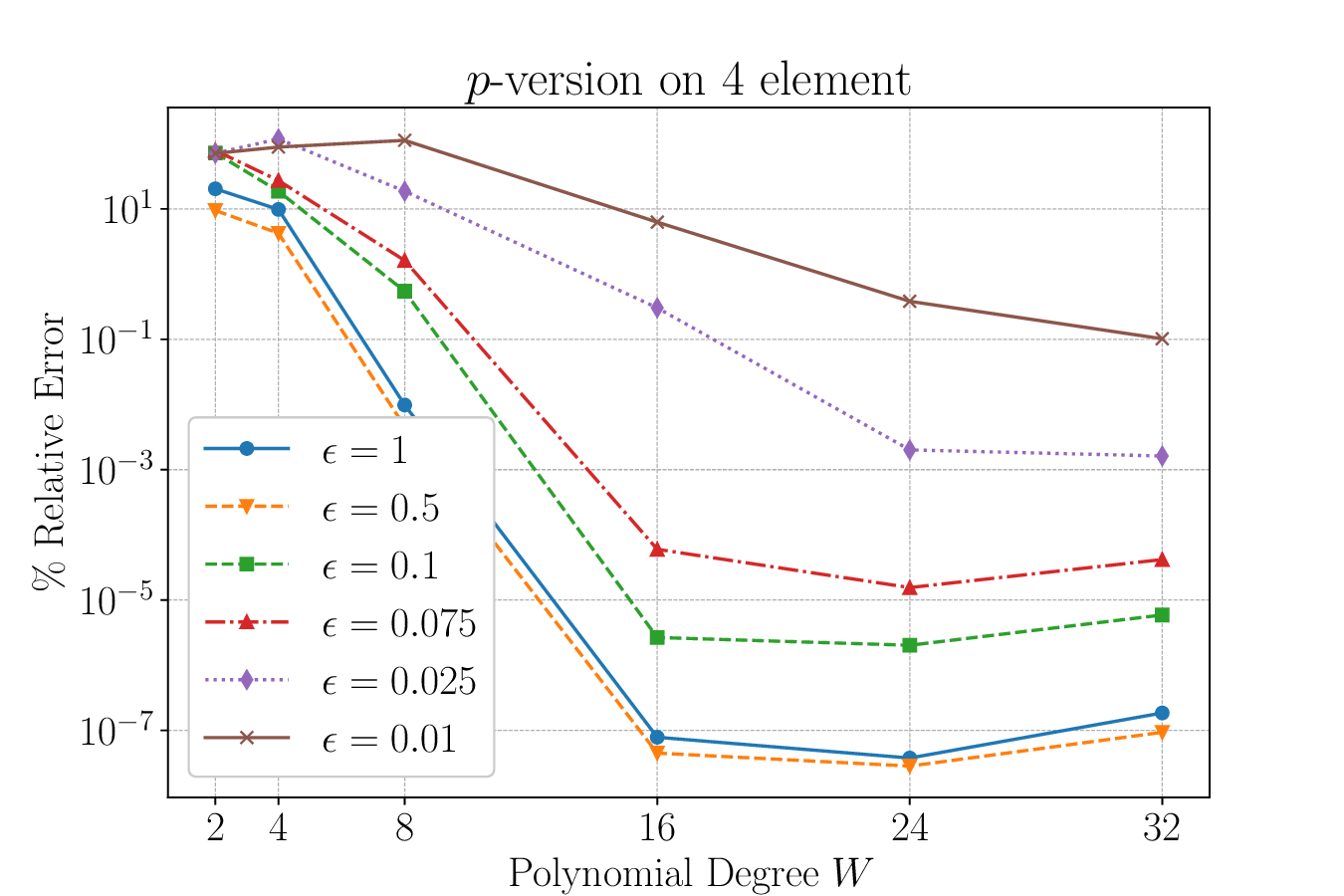}
\label{fig:ex1-c}
}
\hspace{-1.3cm}
\subfigure[]{
\includegraphics[width=0.50\textwidth]{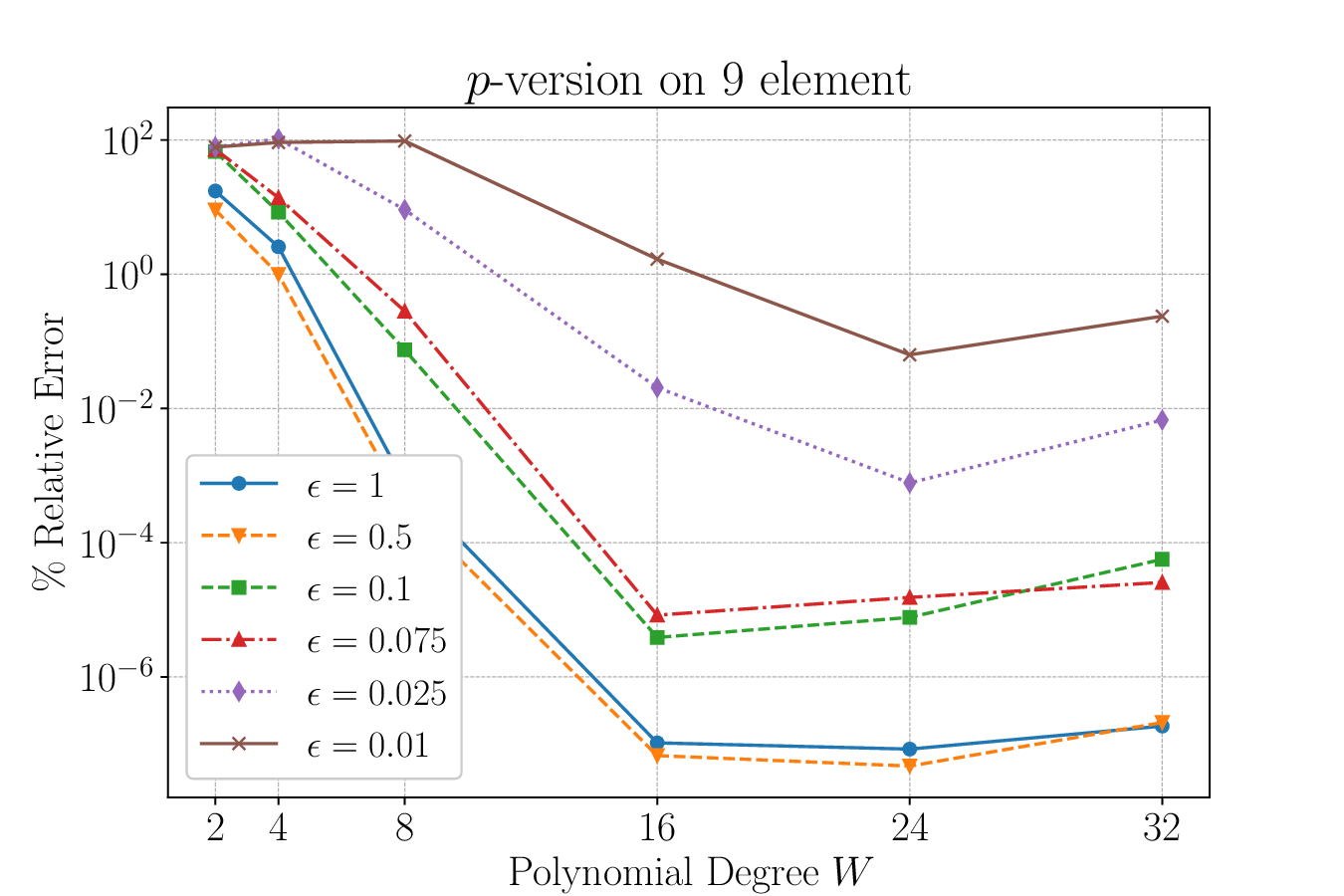}
\label{fig:ex1-d}
}  
\caption{Performance of the $p$-version on different mesh configurations (Example~\ref{examp1}).}
\label{fig:ex1}
\end{figure}
\newpage
\begin{example}[\textbf{Boundary layers on the top and right edges}] \label{example2}
\end{example}

We now consider the boundary layer problem~(\ref{eq2.6})--(\ref{eq2.7}) on the unit square $\Omega = (0,1)^2$ with $a = 0$. The forcing function $f(x,y)$ is chosen such that the exact solution is
\begin{align*}
w_\epsilon(x,y) = \left( \sin\left(\frac{\pi x}{2}\right) - \frac{e^{-(1-x)/\epsilon} - e^{-1/\epsilon}}{1 - e^{-1/\epsilon}} \right)
\left( y - \frac{e^{-2(1-y)/\epsilon} - e^{-2/\epsilon}}{1 - e^{-2/\epsilon}} \right),
\end{align*}
which exhibits exponential boundary layers along the right ($x=1$) and top ($y=1$) edges that become sharper as $\epsilon$ decreases~\cite{xeno2016}. Figure~\ref{Soln_Ex2} shows the exact solution with steep gradients near the layer regions.

\begin{figure}[h!]
\centering
\includegraphics[width=1.0\textwidth]{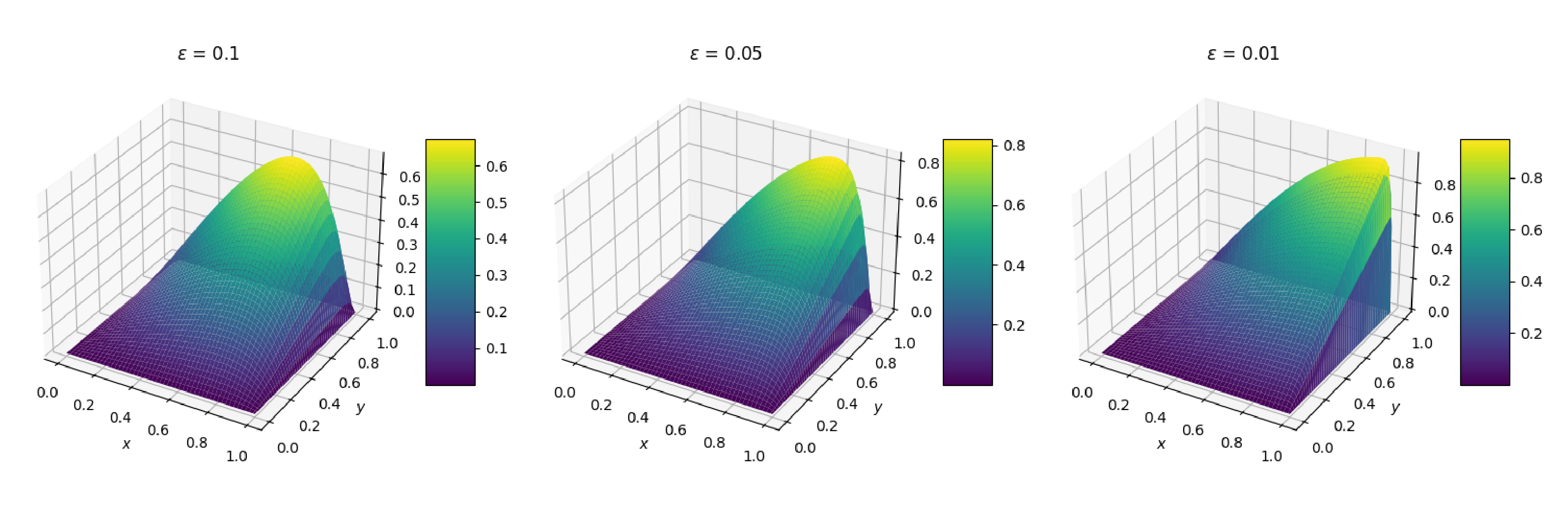}
\caption{Exact solution (Example~\ref{example2}).}
\label{Soln_Ex2}
\end{figure}

Tables~\ref{ex2_ele1}--\ref{ex2_ele9} report the relative $H_{\epsilon}^2$-errors in semi $\log$-scale for uniform meshes with 1, 2, 4, and 9 spectral elements with polynomial degrees \( W = 2, 4, \ldots, 32 \), and Figure~\ref{fig:ex2} illustrates the convergence behavior for various $\epsilon$.

\begin{table}[h!]
\centering
\caption{Relative error for different scales of $\epsilon$ for Example~\ref{example2} (on a single element)}
\label{ex2_ele1}
\begin{tabular}{c | c c c c c c}
\toprule
\textbf{$W$} & \boldmath{$\epsilon = 1$} & \boldmath{$\epsilon = 0.5$} & \boldmath{$\epsilon = 0.1$} & \boldmath{$\epsilon = 0.075$} & \boldmath{$\epsilon = 0.025$} & \boldmath{$\epsilon = 0.01$} \\
\midrule
2  & $3.34\mathrm{E}{+}01$ & $1.41\mathrm{E}{+}01$ & $1.00\mathrm{E}{+}02$ & $1.00\mathrm{E}{+}02$ & $1.00\mathrm{E}{+}02$ & $1.00\mathrm{E}{+}02$ \\
4  & $8.85\mathrm{E}{-}01$ & $4.08\mathrm{E}{+}00$ & $8.24\mathrm{E}{+}01$ & $1.23\mathrm{E}{+}02$ & $1.00\mathrm{E}{+}02$ & $1.00\mathrm{E}{+}02$ \\
8  & $3.54\mathrm{E}{-}04$ & $8.28\mathrm{E}{-}03$ & $9.02\mathrm{E}{+}00$ & $1.71\mathrm{E}{+}01$ & $1.07\mathrm{E}{+}02$ & $2.68\mathrm{E}{+}02$ \\
16 & $7.26\mathrm{E}{-}07$ & $9.08\mathrm{E}{-}07$ & $5.29\mathrm{E}{-}03$ & $4.85\mathrm{E}{-}02$ & $7.19\mathrm{E}{+}00$ & $4.27\mathrm{E}{+}01$ \\
24 & $2.11\mathrm{E}{-}10$ & $1.75\mathrm{E}{-}10$ & $7.80\mathrm{E}{-}08$ & $5.10\mathrm{E}{-}06$ & $2.15\mathrm{E}{-}01$ & $7.68\mathrm{E}{+}00$ \\
32 & $2.12\mathrm{E}{-}10$ & $1.50\mathrm{E}{-}10$ & $3.01\mathrm{E}{-}10$ & $1.32\mathrm{E}{-}09$ & $1.50\mathrm{E}{-}03$ & $9.97\mathrm{E}{-}01$ \\
\bottomrule
\end{tabular}
\end{table}
\begin{table}[h!]
\centering
\caption{Relative error for different scales of $\epsilon$ for Example~\ref{example2} (on 2 elements)}
\label{ex2_ele2}
\begin{tabular}{c | c c c c c c}
\toprule
\textbf{ $W$} & \boldmath{$\epsilon = 1$} & \boldmath{$\epsilon = 0.5$} & \boldmath{$\epsilon = 0.1$} & \boldmath{$\epsilon = 0.075$} & \boldmath{$\epsilon = 0.025$} & \boldmath{$\epsilon = 0.01$} \\
\midrule
2  & $1.51\mathrm{E}{+}01$ & $1.23\mathrm{E}{+}01$ & $1.00\mathrm{E}{+}02$ & $1.00\mathrm{E}{+}02$ & $1.00\mathrm{E}{+}02$ & $1.00\mathrm{E}{+}02$ \\
4  & $7.50\mathrm{E}{-}01$ & $4.16\mathrm{E}{+}00$ & $7.97\mathrm{E}{+}01$ & $1.22\mathrm{E}{+}02$ & $1.00\mathrm{E}{+}02$ & $1.00\mathrm{E}{+}02$ \\
8  & $2.25\mathrm{E}{-}04$ & $8.35\mathrm{E}{-}03$ & $9.09\mathrm{E}{+}00$ & $1.73\mathrm{E}{+}01$ & $9.93\mathrm{E}{+}01$ & $2.58\mathrm{E}{+}02$ \\
16 & $4.80\mathrm{E}{-}07$ & $4.64\mathrm{E}{-}07$ & $5.33\mathrm{E}{-}03$ & $4.88\mathrm{E}{-}02$ & $7.22\mathrm{E}{+}00$ & $4.26\mathrm{E}{+}01$ \\
24 & $3.61\mathrm{E}{-}09$ & $2.89\mathrm{E}{-}09$ & $7.84\mathrm{E}{-}08$ & $5.13\mathrm{E}{-}06$ & $2.16\mathrm{E}{-}01$ & $7.69\mathrm{E}{+}00$ \\
32 & $1.93\mathrm{E}{-}08$ & $1.44\mathrm{E}{-}08$ & $8.30\mathrm{E}{-}09$ & $3.69\mathrm{E}{-}08$ & $1.51\mathrm{E}{-}03$ & $9.98\mathrm{E}{-}01$ \\
\bottomrule
\end{tabular}
\end{table}
\begin{table}[h!]
\centering
\caption{Relative error for different scales of $\epsilon$ for Example~\ref{example2} (on 4 elements)}
\label{ex2_ele4}
\begin{tabular}{c | c c c c c c}
\toprule
\textbf{ $W$} & \boldmath{$\epsilon = 1$} & \boldmath{$\epsilon = 0.5$} & \boldmath{$\epsilon = 0.1$} & \boldmath{$\epsilon = 0.075$} & \boldmath{$\epsilon = 0.025$} & \boldmath{$\epsilon = 0.01$} \\
\midrule
2  & $5.95\mathrm{E}{+}00$ & $8.69\mathrm{E}{+}00$ & $7.90\mathrm{E}{+}01$ & $1.00\mathrm{E}{+}02$ & $1.00\mathrm{E}{+}02$ & $1.00\mathrm{E}{+}02$ \\
4  & $1.26\mathrm{E}{-}01$ & $7.17\mathrm{E}{-}01$ & $5.62\mathrm{E}{+}01$ & $7.86\mathrm{E}{+}01$ & $1.39\mathrm{E}{+}02$ & $1.00\mathrm{E}{+}02$ \\
8  & $2.12\mathrm{E}{-}04$ & $4.22\mathrm{E}{-}04$ & $7.11\mathrm{E}{-}01$ & $2.01\mathrm{E}{+}00$ & $7.41\mathrm{E}{+}01$ & $1.18\mathrm{E}{+}02$ \\
16 & $5.33\mathrm{E}{-}07$ & $3.90\mathrm{E}{-}07$ & $6.78\mathrm{E}{-}06$ & $9.77\mathrm{E}{-}05$ & $3.55\mathrm{E}{-}01$ & $7.06\mathrm{E}{+}00$ \\
24 & $5.23\mathrm{E}{-}09$ & $4.01\mathrm{E}{-}09$ & $1.12\mathrm{E}{-}08$ & $2.26\mathrm{E}{-}08$ & $4.49\mathrm{E}{-}04$ & $4.20\mathrm{E}{-}01$ \\
32 & $2.71\mathrm{E}{-}08$ & $2.15\mathrm{E}{-}08$ & $7.31\mathrm{E}{-}09$ & $1.73\mathrm{E}{-}08$ & $3.38\mathrm{E}{-}06$ & $7.62\mathrm{E}{-}03$ \\
\bottomrule
\end{tabular}
\end{table}

\begin{table}[h!]
\centering
\caption{Relative error for different scales of $\epsilon$ for Example~\ref{example2} (on 9 elements)}
\label{ex2_ele9}
\begin{tabular}{c | c c c c c c}
\toprule
\textbf{ $W$} & \boldmath{$\epsilon = 1$} & \boldmath{$\epsilon = 0.5$} & \boldmath{$\epsilon = 0.1$} & \boldmath{$\epsilon = 0.075$} & \boldmath{$\epsilon = 0.025$} & \boldmath{$\epsilon = 0.01$} \\
\midrule
2  & $2.71\mathrm{E}{+}00$ & $4.28\mathrm{E}{+}00$ & $7.32\mathrm{E}{+}01$ & $1.00\mathrm{E}{+}02$ & $1.00\mathrm{E}{+}02$ & $1.00\mathrm{E}{+}02$ \\
4  & $3.69\mathrm{E}{-}02$ & $2.32\mathrm{E}{-}01$ & $2.49\mathrm{E}{+}01$ & $6.63\mathrm{E}{+}01$ & $1.08\mathrm{E}{+}02$ & $1.00\mathrm{E}{+}02$ \\
8  & $2.54\mathrm{E}{-}04$ & $2.15\mathrm{E}{-}04$ & $9.93\mathrm{E}{-}02$ & $3.68\mathrm{E}{-}01$ & $6.59\mathrm{E}{+}01$ & $1.01\mathrm{E}{+}02$ \\
16 & $2.34\mathrm{E}{-}07$ & $2.04\mathrm{E}{-}07$ & $3.79\mathrm{E}{-}06$ & $3.35\mathrm{E}{-}06$ & $2.40\mathrm{E}{-}02$ & $1.85\mathrm{E}{+}00$ \\
24 & $1.20\mathrm{E}{-}08$ & $9.64\mathrm{E}{-}09$ & $8.69\mathrm{E}{-}09$ & $6.44\mathrm{E}{-}08$ & $5.51\mathrm{E}{-}06$ & $2.79\mathrm{E}{-}02$ \\
32 & $2.71\mathrm{E}{-}08$ & $4.98\mathrm{E}{-}08$ & $4.56\mathrm{E}{-}08$ & $6.44\mathrm{E}{-}08$ & $2.59\mathrm{E}{-}06$ & $1.72\mathrm{E}{-}04$ \\
\bottomrule
\end{tabular}
\end{table}

On the single-element mesh (Table~\ref{ex2_ele1}, Figure~\ref{fig:ex2-a}), the method shows rapid convergence for large $\epsilon$, but stagnates for smaller values due to insufficient resolution of boundary layers. With two elements (Table~\ref{ex2_ele2}, Figure~\ref{fig:ex2-b}), the performance improves slightly, though it remains inadequate for strong layers. Using four elements (Table~\ref{ex2_ele4}, Figure~\ref{fig:ex2-c}) yields significant gains: exponential convergence is observed for moderate and small $\epsilon$, with steadily decreasing errors as $W$ increases. On the 9-element mesh (Table~\ref{ex2_ele9}, Figure~\ref{fig:ex2-d}), robust exponential convergence is achieved even for $\epsilon = 0.01$, confirming that modest mesh refinement combined with $p$-refinement effectively resolves sharp boundary layers, which clearly illustrates the limitations of pure $p$-refinement for boundary layer problems and the benefit of combining it with minimal $h$-refinement to efficiently capture boundary layer behavior.

\begin{figure}[h!]
\centering
\subfigure[]{
\includegraphics[width=0.50\textwidth]{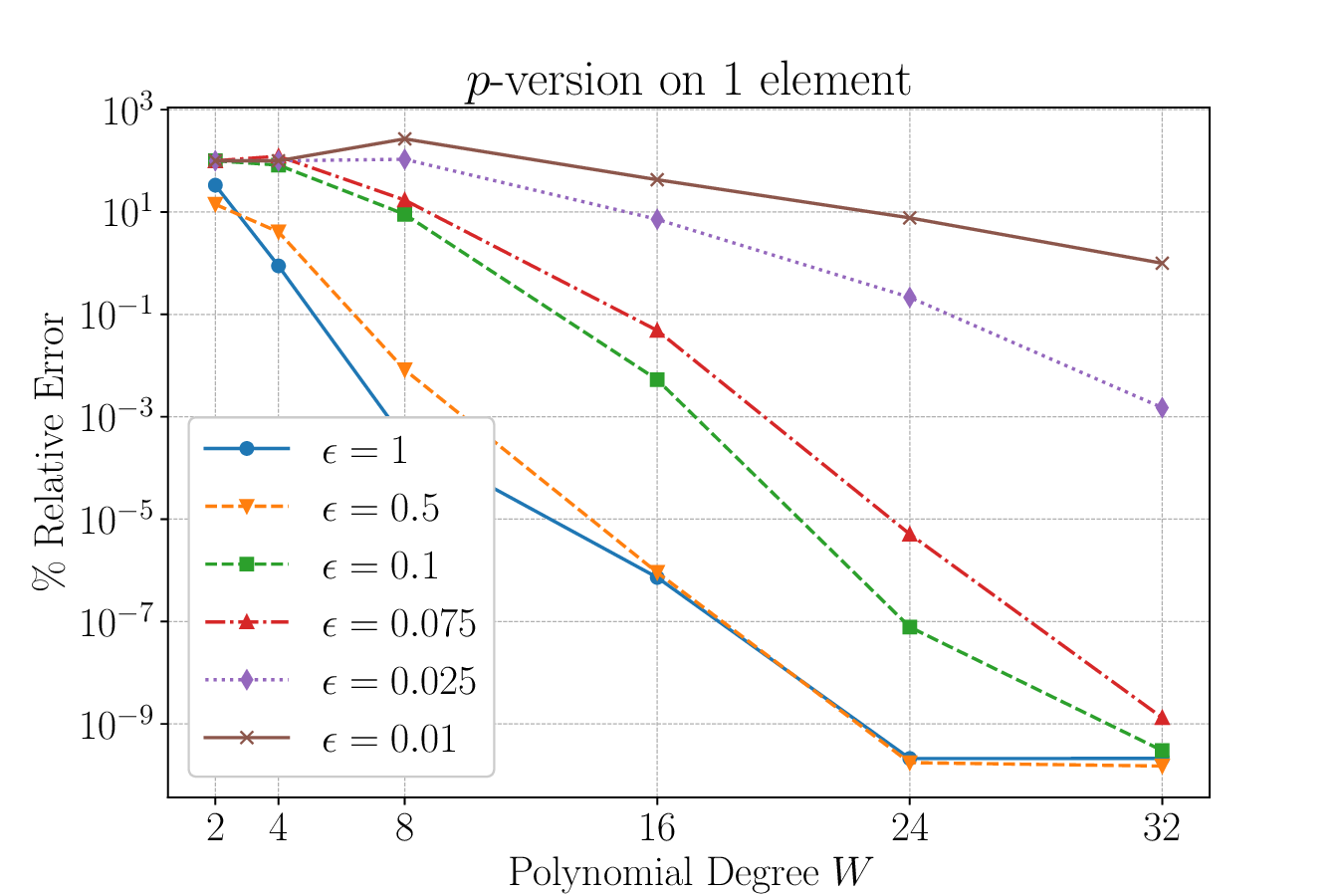}
\label{fig:ex2-a}
}
\hspace{-1.3cm}
\subfigure[]{
\includegraphics[width=0.50\textwidth]{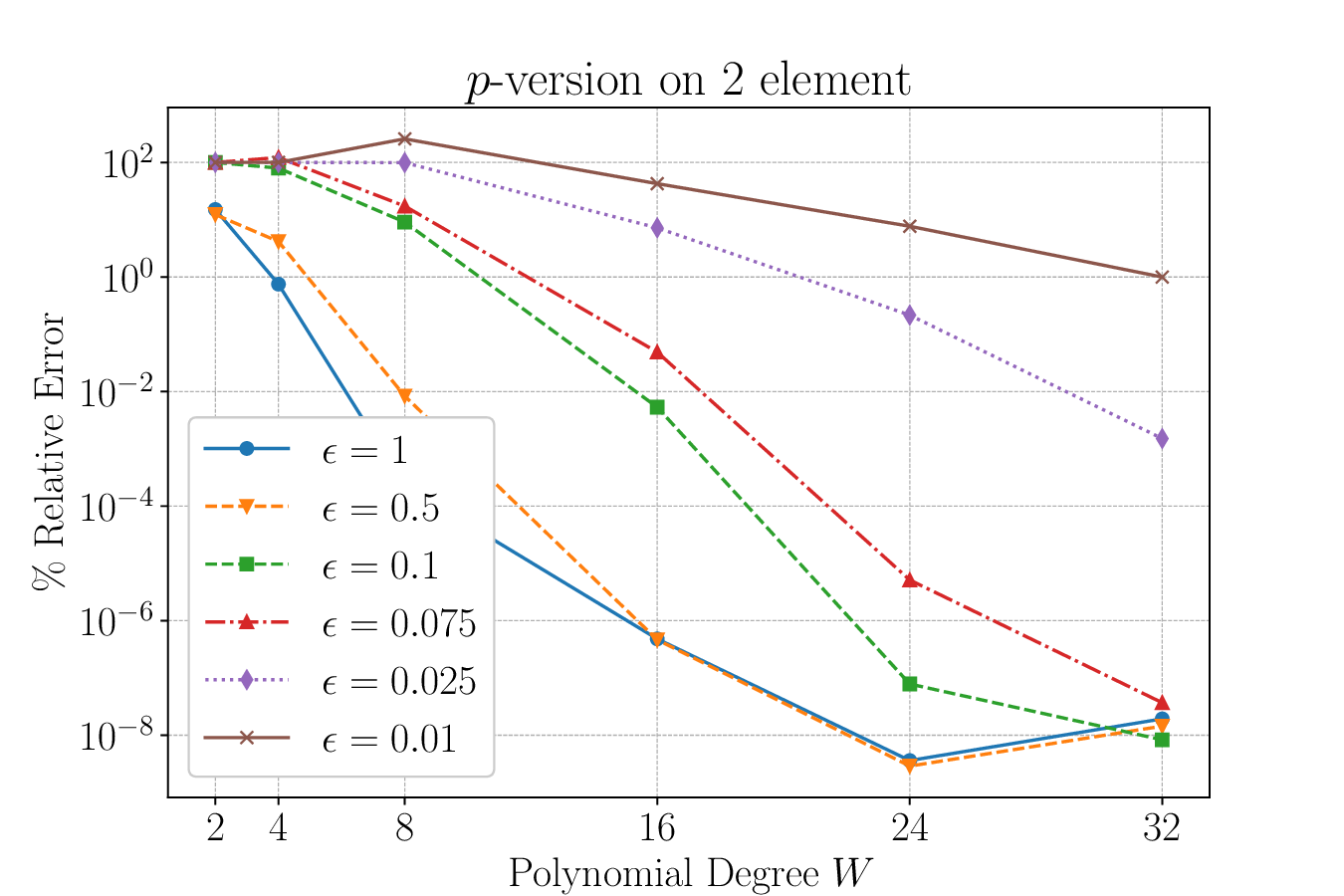}
\label{fig:ex2-b}
}
\hspace{0.5cm}
\subfigure[]{
\includegraphics[width=0.50\textwidth]{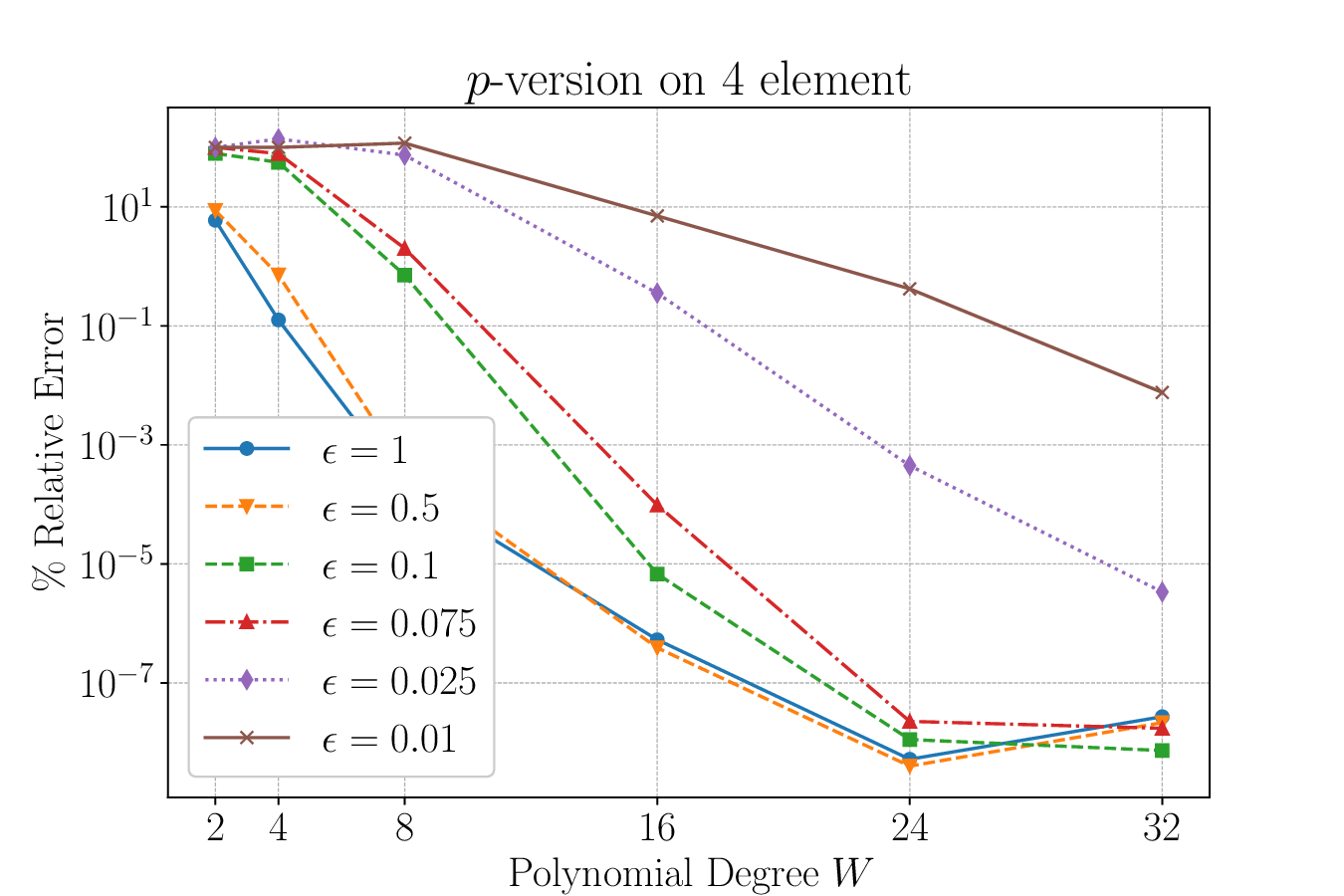}
\label{fig:ex2-c}
}
\hspace{-1.3cm}
\subfigure[]{
\includegraphics[width=0.50\textwidth]{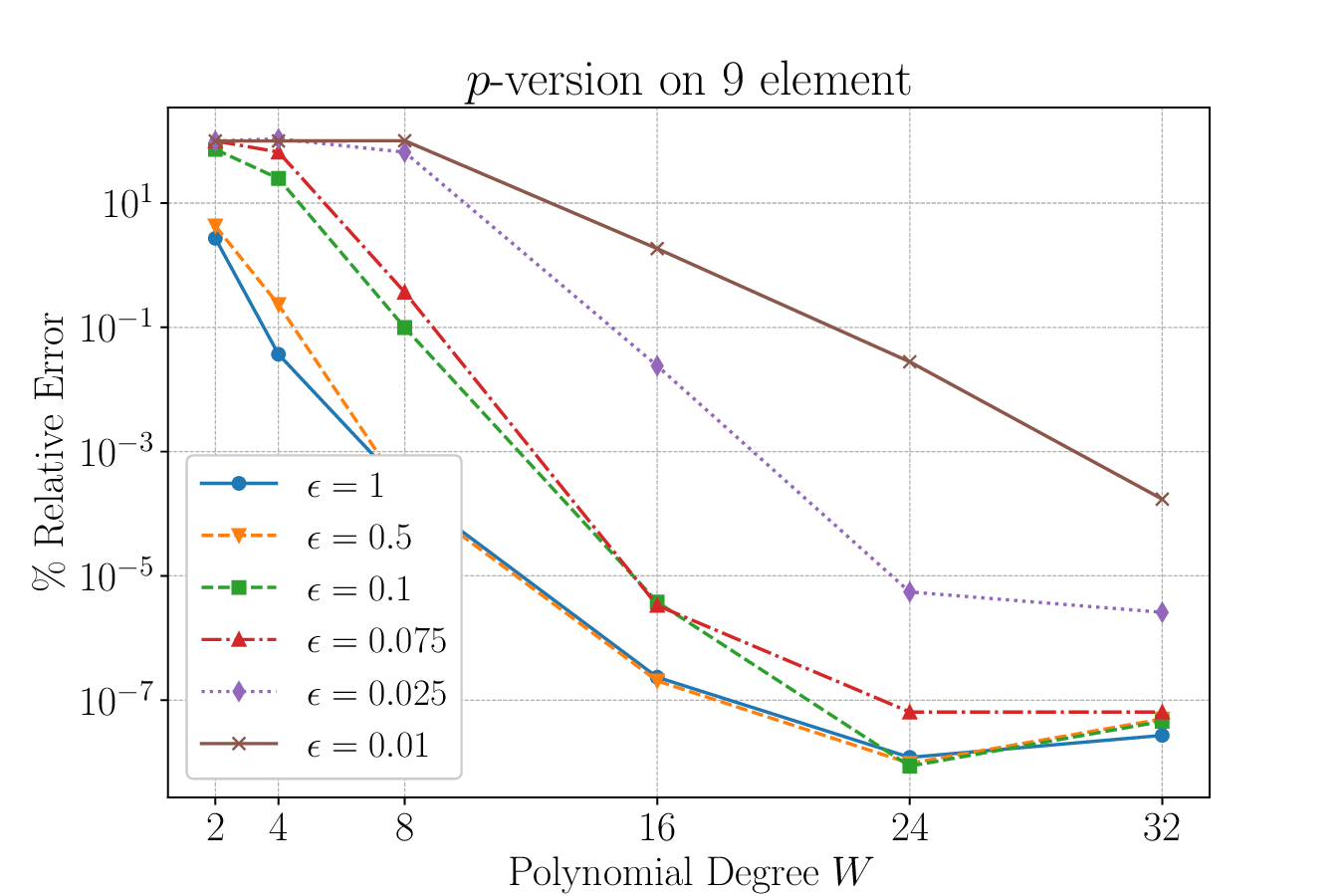}
\label{fig:ex2-d}
} 
\caption{Performance of the $p$-version on different mesh configurations (Example~\ref{example2}).}
\label{fig:ex2}
\end{figure}

\newpage
\begin{example}[\textbf{Boundary layer only on the right edge}] \label{example3}
\end{example}

Consider the boundary layer problem $-\epsilon^2\Delta w=f$, on the unit square domain $\Omega = (0,1)^2$ with $w=0$ on $\partial\Omega=\Gamma$. We choose the source term $f(x,y)$ such that the exact solution is
\begin{align*}
w_\epsilon(x,y) = x^2(1 - x)^2 y^2(1 - y)^2 e^{-(1 - x)/\epsilon},
\end{align*}
which exhibits steep gradients near the right edge ($x = 1$), resulting in the formation of a boundary layer (see Figure~\ref{Soln_Ex3}).

\begin{figure}[h!]
\centering
\includegraphics[width=1.0\textwidth]{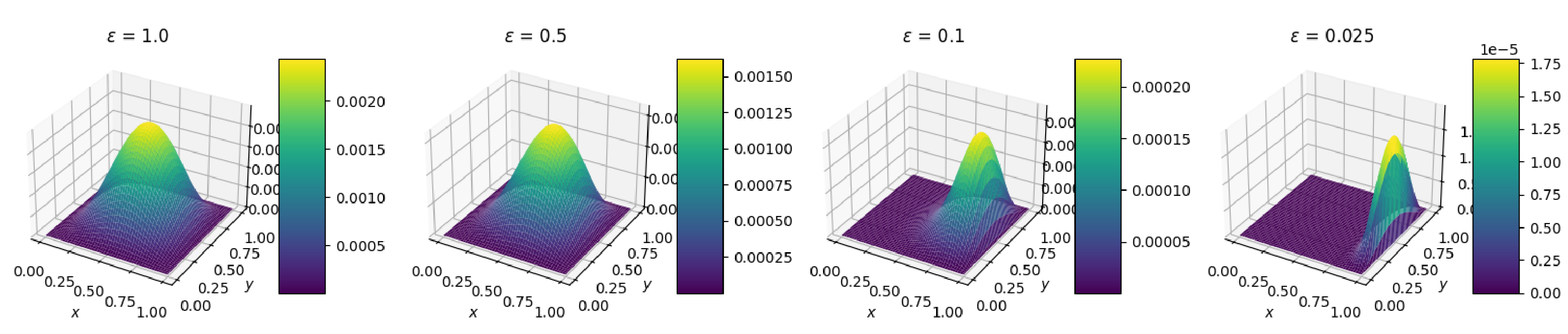}
\caption{Exact solution (Example~\ref{example3}).}
\label{Soln_Ex3}
\end{figure}

To resolve the boundary layer  simiar to Example~\ref{examp1}--\ref{example2} , we apply the $p$-version of the spectral element method on uniform meshes for various values of $\epsilon$. The corresponding convergence behavior with respect to the polynomial degree $W$ on semi $\log$-scale is shown in Figure~\ref{fig:ex3}, and the detailed relative error in $H_{\epsilon}^{2}$-norm are reported in Tables~\ref{ex3_ele1}--\ref{ex3_ele9}.

\begin{table}[h!]
\centering
\caption{Relative error for different scales of $\epsilon$ for Example~\ref{example3} (on a single element)}
\label{ex3_ele1}
\begin{tabular}{c | c c c c c c}
\toprule
\textbf{ $W$} & \boldmath{$\epsilon = 1$} & \boldmath{$\epsilon = 0.5$} & \boldmath{$\epsilon = 0.1$} & \boldmath{$\epsilon = 0.075$} & \boldmath{$\epsilon = 0.025$} & \boldmath{$\epsilon = 0.01$} \\
\midrule
2  & $6.63\mathrm{E}{+}01$ & $9.57\mathrm{E}{+}01$ & $1.00\mathrm{E}{+}02$ & $1.00\mathrm{E}{+}02$ & $1.05\mathrm{E}{+}03$ & $2.67\mathrm{E}{+}14$ \\
4  & $2.22\mathrm{E}{+}01$ & $3.67\mathrm{E}{+}01$ & $1.00\mathrm{E}{+}02$ & $1.00\mathrm{E}{+}02$ & $1.00\mathrm{E}{+}02$ & $1.10\mathrm{E}{+}02$ \\
8  & $2.52\mathrm{E}{-}02$ & $1.01\mathrm{E}{-}01$ & $4.82\mathrm{E}{+}01$ & $1.00\mathrm{E}{+}02$ & $1.00\mathrm{E}{+}02$ & $1.00\mathrm{E}{+}02$ \\
16 & $1.43\mathrm{E}{-}05$ & $1.09\mathrm{E}{-}04$ & $2.47\mathrm{E}{-}02$ & $1.25\mathrm{E}{+}01$ & $3.75\mathrm{E}{+}01$ & $1.00\mathrm{E}{+}02$ \\
24 & $3.00\mathrm{E}{-}09$ & $3.31\mathrm{E}{-}09$ & $4.66\mathrm{E}{-}06$ & $1.24\mathrm{E}{-}05$ & $4.66\mathrm{E}{+}00$ & $4.49\mathrm{E}{+}01$ \\
32 & $1.34\mathrm{E}{-}10$ & $9.37\mathrm{E}{-}11$ & $5.39\mathrm{E}{-}10$ & $1.98\mathrm{E}{-}09$ & $6.44\mathrm{E}{-}05$ & $1.87\mathrm{E}{+}00$ \\
\bottomrule
\end{tabular}
\end{table}

\begin{table}[h!]
\centering
\caption{Relative error for different scales of $\epsilon$ for Example~\ref{example3} (on 2 elements)}
\label{ex3_ele2}
\begin{tabular}{c | c c c c c c}
\toprule
\textbf{ $W$} & \boldmath{$\epsilon = 1$} & \boldmath{$\epsilon = 0.5$} & \boldmath{$\epsilon = 0.1$} & \boldmath{$\epsilon = 0.075$} & \boldmath{$\epsilon = 0.025$} & \boldmath{$\epsilon = 0.01$} \\
\midrule
2  & $7.65\mathrm{E}{+}01$ & $9.69\mathrm{E}{+}01$ & $1.00\mathrm{E}{+}02$ & $1.00\mathrm{E}{+}02$ & $1.00\mathrm{E}{+}02$ & $2.60\mathrm{E}{+}03$ \\
4  & $2.66\mathrm{E}{+}00$ & $8.51\mathrm{E}{+}00$ & $1.00\mathrm{E}{+}02$ & $1.00\mathrm{E}{+}02$ & $1.00\mathrm{E}{+}02$ & $1.00\mathrm{E}{+}02$ \\
8  & $9.60\mathrm{E}{-}03$ & $6.57\mathrm{E}{-}02$ & $6.24\mathrm{E}{+}01$ & $6.20\mathrm{E}{+}01$ & $1.00\mathrm{E}{+}02$ & $1.00\mathrm{E}{+}02$ \\
16 & $1.19\mathrm{E}{-}05$ & $9.97\mathrm{E}{-}05$ & $6.12\mathrm{E}{-}02$ & $5.87\mathrm{E}{-}01$ & $4.78\mathrm{E}{+}01$ & $1.00\mathrm{E}{+}02$ \\
24 & $3.32\mathrm{E}{-}09$ & $4.13\mathrm{E}{-}09$ & $7.57\mathrm{E}{-}06$ & $1.50\mathrm{E}{-}05$ & $1.13\mathrm{E}{-}02$ & $9.34\mathrm{E}{+}00$ \\
32 & $1.33\mathrm{E}{-}08$ & $1.14\mathrm{E}{-}08$ & $2.92\mathrm{E}{-}08$ & $5.67\mathrm{E}{-}08$ & $4.58\mathrm{E}{-}06$ & $1.40\mathrm{E}{-}03$ \\
\bottomrule
\end{tabular}
\end{table}
\begin{table}[h!]
\centering
\caption{Relative error for different scales of $\epsilon$ for Example~\ref{example3} (on 4 elements)}
\label{ex3_ele4}
\begin{tabular}{c | c c c c c c}
\toprule
\textbf{ $W$} & \boldmath{$\epsilon = 1$} & \boldmath{$\epsilon = 0.5$} & \boldmath{$\epsilon = 0.1$} & \boldmath{$\epsilon = 0.075$} & \boldmath{$\epsilon = 0.025$} & \boldmath{$\epsilon = 0.01$} \\
\midrule
2  & $8.13\mathrm{E}{+}01$ & $9.92\mathrm{E}{+}01$ & $1.00\mathrm{E}{+}02$ & $1.00\mathrm{E}{+}02$ & $1.00\mathrm{E}{+}02$ & $1.66\mathrm{E}{+}03$ \\
4  & $2.44\mathrm{E}{+}00$ & $7.43\mathrm{E}{+}00$ & $1.00\mathrm{E}{+}02$ & $1.00\mathrm{E}{+}02$ & $1.00\mathrm{E}{+}02$ & $1.00\mathrm{E}{+}02$ \\
8  & $8.35\mathrm{E}{-}03$ & $2.22\mathrm{E}{-}02$ & $6.06\mathrm{E}{+}01$ & $1.00\mathrm{E}{+}02$ & $1.00\mathrm{E}{+}02$ & $1.00\mathrm{E}{+}02$ \\
16 & $9.34\mathrm{E}{-}06$ & $4.16\mathrm{E}{-}05$ & $1.44\mathrm{E}{-}02$ & $1.11\mathrm{E}{-}01$ & $4.77\mathrm{E}{+}01$ & $1.00\mathrm{E}{+}02$ \\
24 & $4.09\mathrm{E}{-}09$ & $4.80\mathrm{E}{-}09$ & $6.94\mathrm{E}{-}07$ & $7.47\mathrm{E}{-}06$ & $2.93\mathrm{E}{-}03$ & $9.49\mathrm{E}{+}00$ \\
32 & $1.92\mathrm{E}{-}08$ & $1.67\mathrm{E}{-}08$ & $1.36\mathrm{E}{-}08$ & $1.91\mathrm{E}{-}07$ & $7.32\mathrm{E}{-}06$ & $1.40\mathrm{E}{-}03$ \\
\bottomrule
\end{tabular}
\end{table}
\begin{table}[h!]
\centering
\caption{Relative error for different scales of $\epsilon$ for Example~\ref{example3} (on 9 elements)}
\label{ex3_ele9}
\begin{tabular}{c | c c c c c c}
\toprule
\textbf{ $W$} & \boldmath{$\epsilon = 1$} & \boldmath{$\epsilon = 0.5$} & \boldmath{$\epsilon = 0.1$} & \boldmath{$\epsilon = 0.075$} & \boldmath{$\epsilon = 0.025$} & \boldmath{$\epsilon = 0.01$} \\
\midrule
2  & $2.89\mathrm{E}{+}01$ & $9.90\mathrm{E}{+}01$ & $1.00\mathrm{E}{+}02$ & $1.00\mathrm{E}{+}02$ & $1.00\mathrm{E}{+}02$ & $1.00\mathrm{E}{+}02$ \\
4  & $9.57\mathrm{E}{-}01$ & $5.43\mathrm{E}{+}00$ & $1.00\mathrm{E}{+}02$ & $1.00\mathrm{E}{+}02$ & $1.00\mathrm{E}{+}02$ & $1.00\mathrm{E}{+}02$ \\
8  & $8.48\mathrm{E}{-}03$ & $2.74\mathrm{E}{-}02$ & $7.26\mathrm{E}{+}01$ & $1.00\mathrm{E}{+}02$ & $1.00\mathrm{E}{+}02$ & $1.00\mathrm{E}{+}02$ \\
16 & $1.38\mathrm{E}{-}05$ & $3.72\mathrm{E}{-}05$ & $1.98\mathrm{E}{-}02$ & $5.18\mathrm{E}{-}01$ & $4.92\mathrm{E}{+}01$ & $1.00\mathrm{E}{+}02$ \\
24 & $7.22\mathrm{E}{-}09$ & $7.74\mathrm{E}{-}09$ & $6.94\mathrm{E}{-}07$ & $3.75\mathrm{E}{-}06$ & $4.99\mathrm{E}{-}03$ & $1.00\mathrm{E}{+}01$ \\
32 & $3.70\mathrm{E}{-}08$ & $3.35\mathrm{E}{-}08$ & $1.36\mathrm{E}{-}08$ & $1.07\mathrm{E}{-}07$ & $6.02\mathrm{E}{-}06$  & $2.06\mathrm{E}{-}04$ \\
\bottomrule 
\end{tabular}
\end{table}

On a single-element mesh (Figure~\ref{fig:ex3}(a)), exponential convergence is observed for large $\epsilon$, but the method fails to resolve the boundary layer for small $ \epsilon$, with error saturation or deterioration at higher $W$. As the number of elements increases to 2 and 4 (Figures~\ref{fig:ex3}(b)--(c)), the method captures moderate boundary layers more effectively, although the accuracy remains limited for small $\epsilon$. On a 9-element mesh (Figure~\ref{fig:ex3}(d)), substantial improvement is achieved across all values of $\epsilon$, with robust exponential convergence clearly visible for $\epsilon \geq 0.075$, and noticeable error reduction even for $\epsilon = 0.01$.

The convergence behavior shown in Figure~\ref{fig:ex3} demonstrates that while the $p$-version effectively resolves smooth solutions, resolving sharp boundary layers requires mesh refinement. For small values of $\epsilon$,  further improvement would require layer-adapted methods.

\begin{figure}[h!]
\centering
\centering
\subfigure[]{
\includegraphics[width=0.50\textwidth]{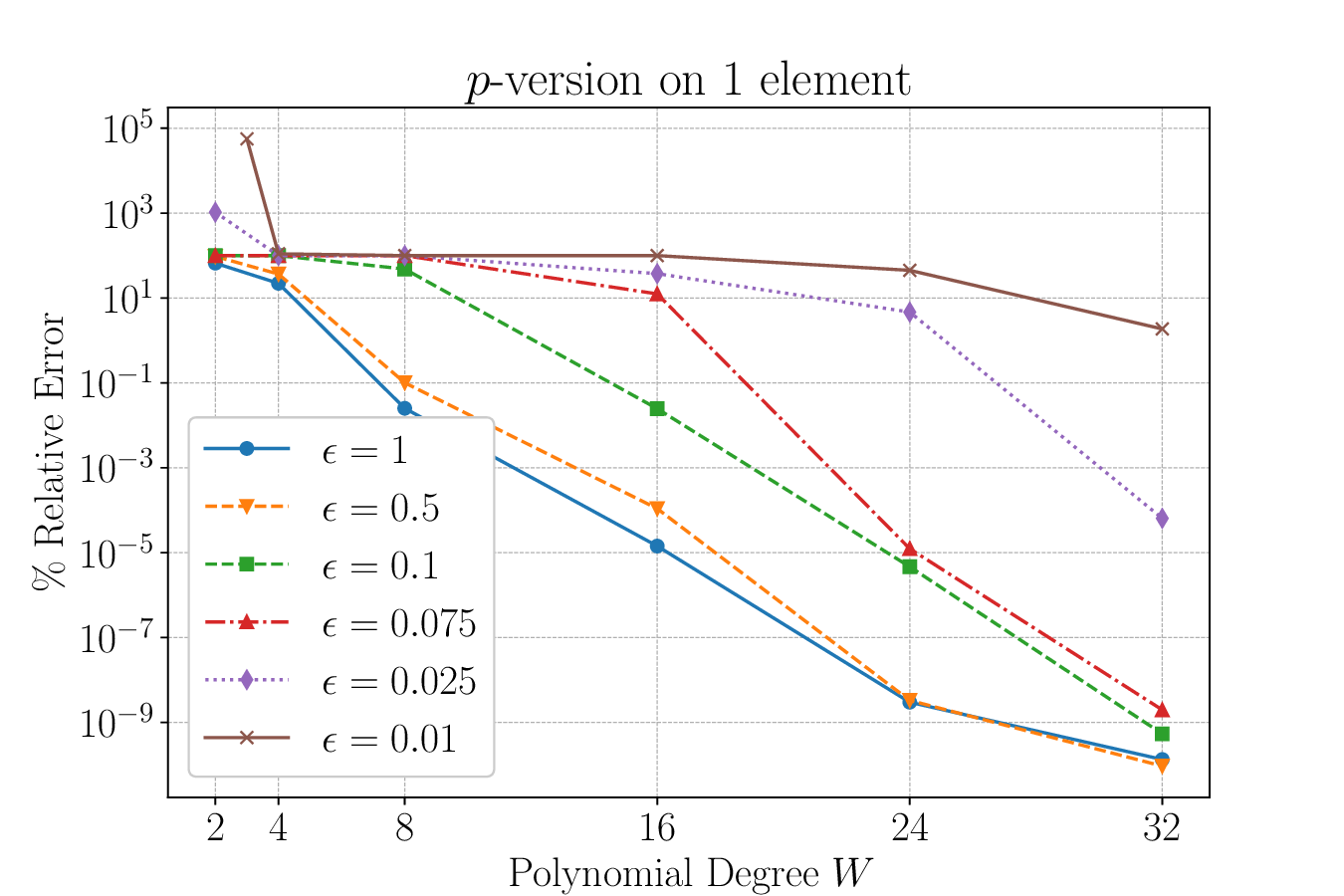}
\label{fig:ex3-a}
}
\hspace{-1.3cm}
\subfigure[]{
\includegraphics[width=0.50\textwidth]{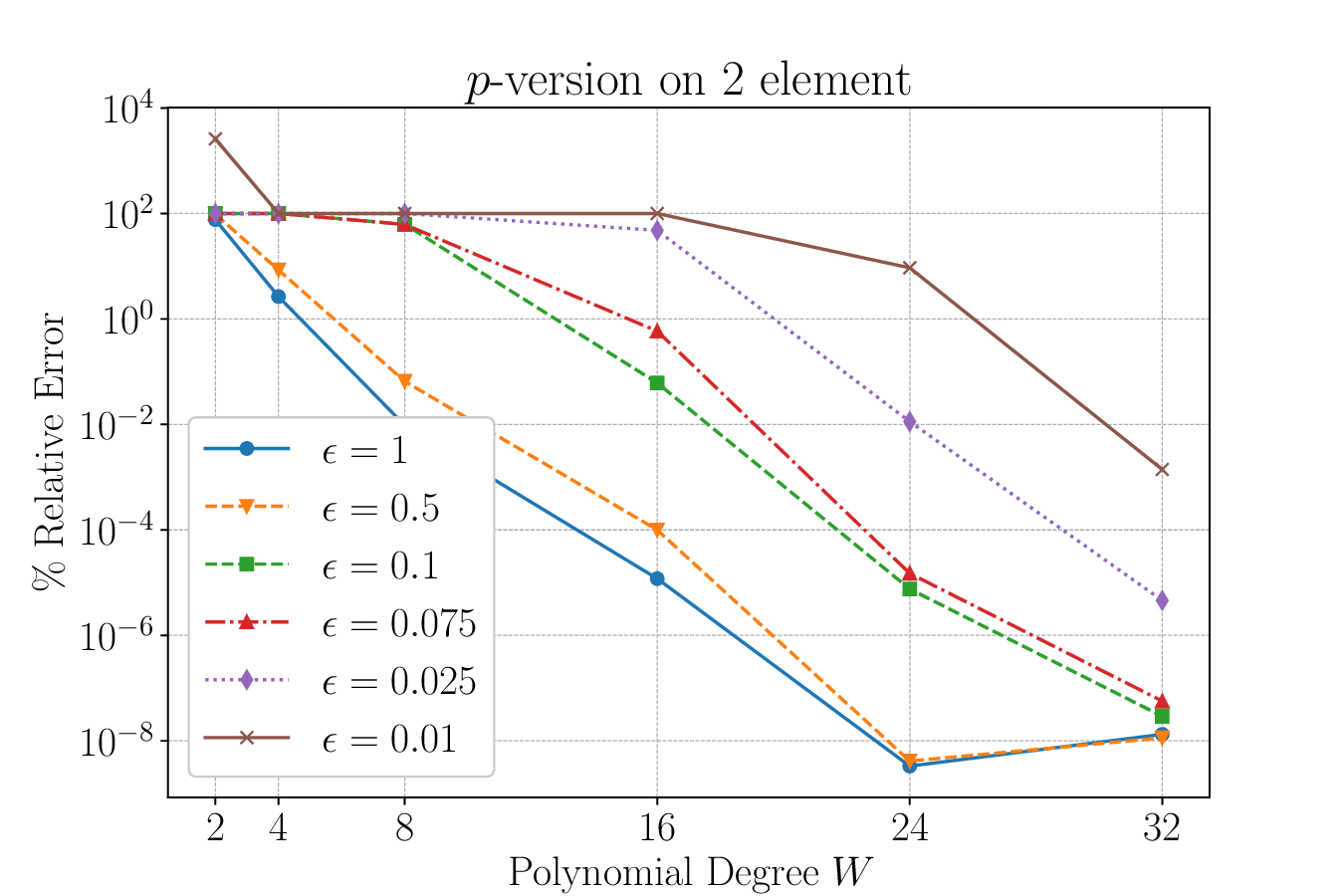}
\label{fig:ex3-b}
}
\hspace{0.5cm}
\subfigure[]{
\includegraphics[width=0.50\textwidth]{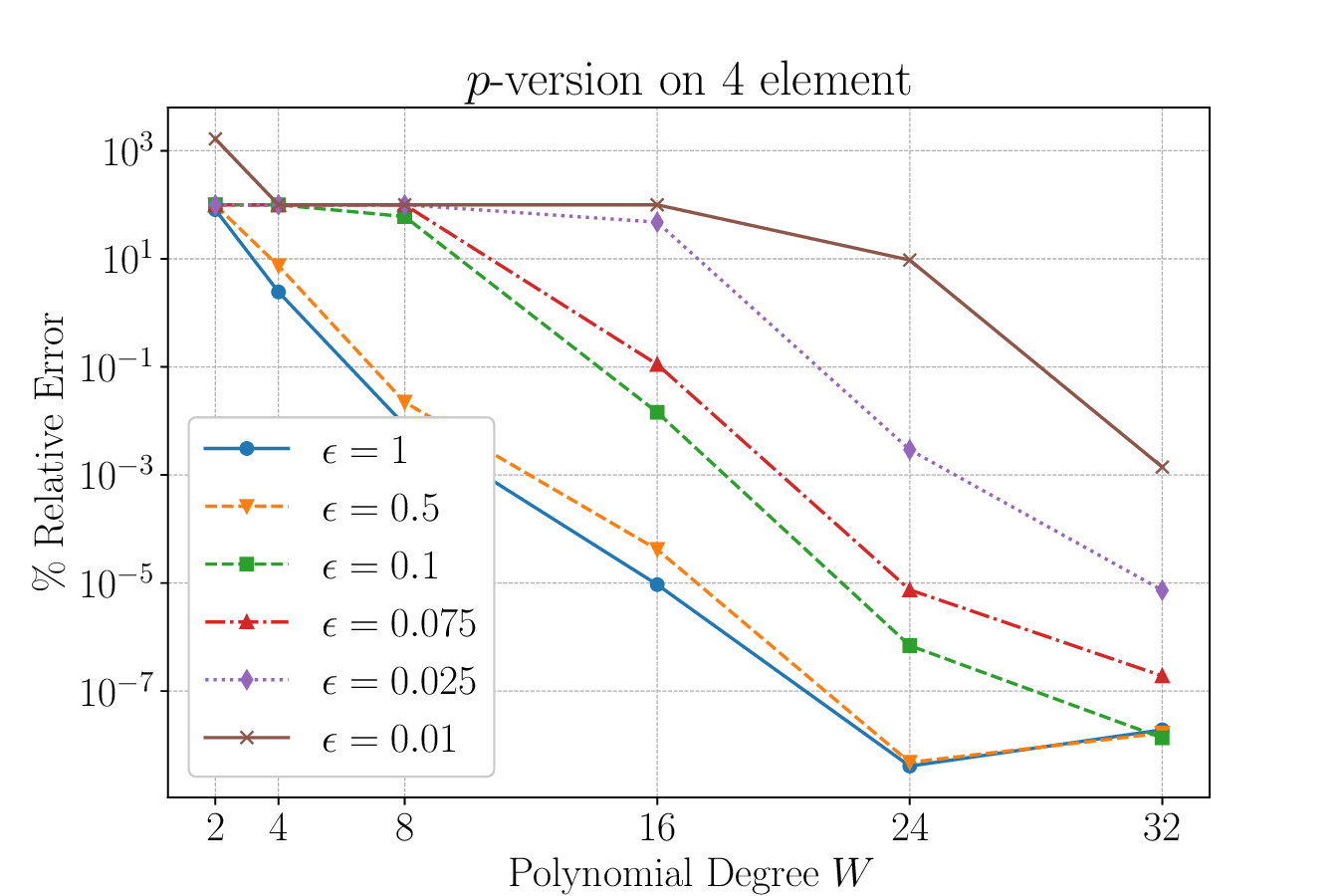}
\label{fig:ex3-c}
}
\hspace{-1.3cm}
\subfigure[]{
\includegraphics[width=0.50\textwidth]{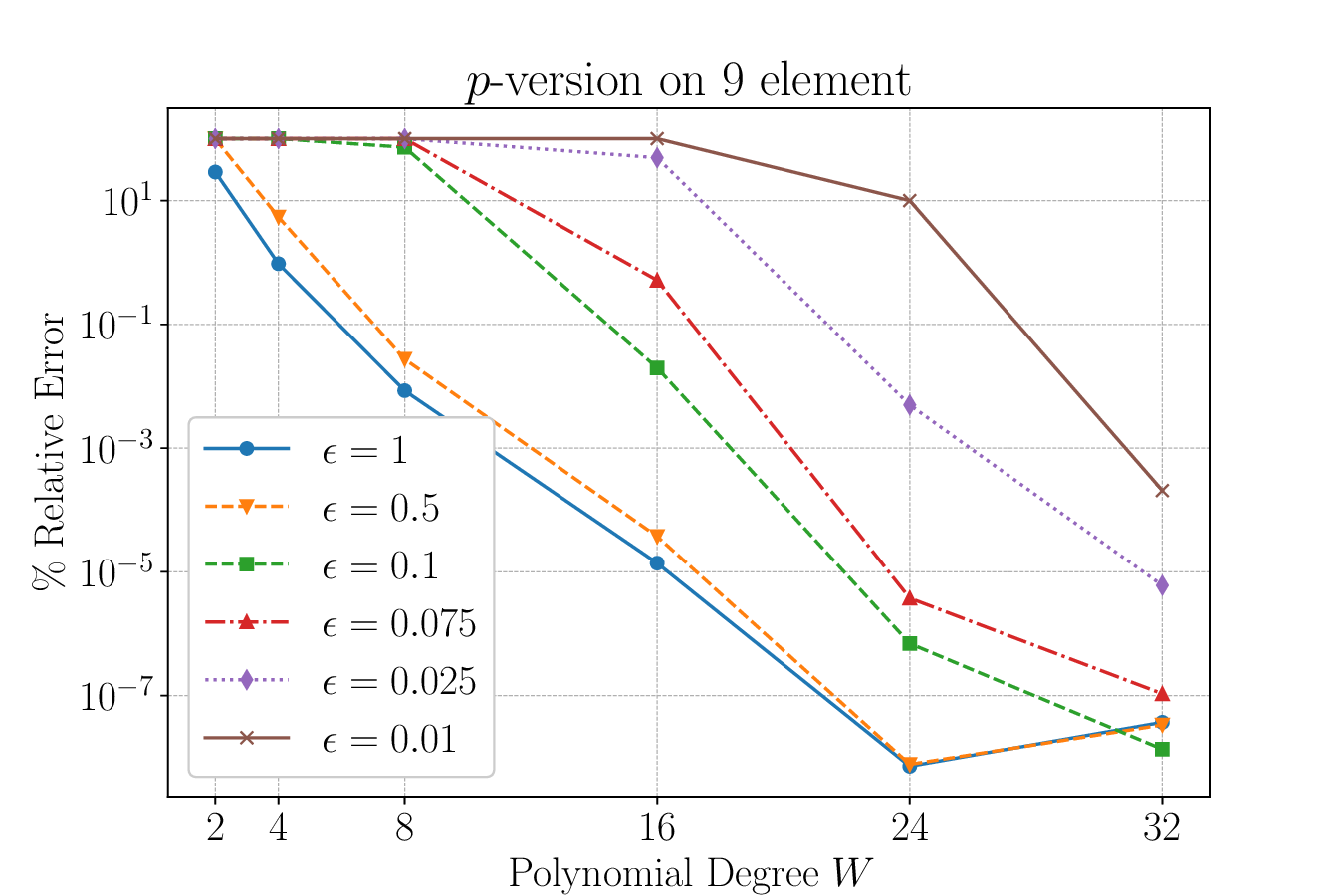}
\label{fig:ex3-d}
}
\caption{Performance of the $p$-version on different mesh configurations (Example~\ref{example3}).}
\label{fig:ex3}
\end{figure}

We now examine our method for the $rp$-version on a rectangular domain, employing a spectral boundary layer mesh (as shown in Figure~\ref{fig3}) by suitably chosen test problems.

\begin{example}[\textbf{Boundary layer only on the left edge}]\label{example4}
\end{example}
Consider the boundary layer problem~(\ref{eq2.6})--(\ref{eq2.7}) on $\Omega = (0,1)^2$ with $a = 0$, where the forcing function $f(x,y)$ is chosen so that the exact solution is
\begin{align*}
w_\epsilon(x,y) = x^2(1 - x)^2 y^2(1 - y)^2 e^{-x/\epsilon},
\end{align*}
where the solution develops a sharp boundary layer near the left edge ($x = 0$). Figure~\ref{Soln_Ex4} shows the exact solution highlighting the steep layer near the boundary.

\begin{figure}[h!]
\centering
\includegraphics[width=1.0\textwidth]{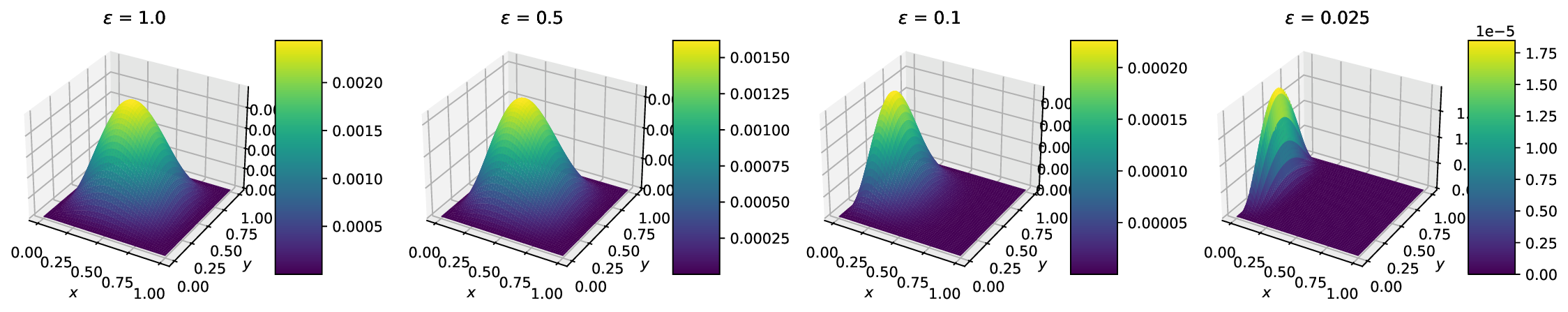}
\caption{Exact solution (Example 7.4).}
\label{Soln_Ex4}
\end{figure}

To resolve the boundary layer at $x = 0$, we used a spectral boundary layer mesh with two elements: a thin element near the layer and a coarse element covering the remainder of the domain, as similar to Figure~\ref{fig3}. The polynomial degree $W$ was varied independently on each element for accurate approximation.
As predicted by Theorem~\ref{thm6.2}, Table~\ref{ex4_blm} demonstrates robust exponential convergence in $H_{\epsilon}^{2}$-norm on semi $\log$-scale with respect to the polynomial degree $W$. For large $\epsilon$ values, the method quickly achieves machine-level accuracy. More importantly, even for small values such as $\epsilon = 0.01$, the error decreases rapidly once $W \geq 20$, confirming that the method accurately captures the boundary layer and regular (smooth) component of the solution (as shown in Figure~\ref{fig:ex4}). This validates the theoretical prediction that $rp$-version can deliver high accuracy with minimal elements when properly aligned to the layer structure.

\begin{table}[h!]
\centering
\caption{Relative error for different values of $\epsilon$ using spectral boundary layer mesh (Example~7.4)}
\label{ex4_blm}
\begin{tabular}{c | c c c c c c}
\toprule
\textbf{ $W$} & \boldmath{$\epsilon = 1$} & \boldmath{$\epsilon = 0.5$} & \boldmath{$\epsilon = 0.1$} & \boldmath{$\epsilon = 0.075$} & \boldmath{$\epsilon = 0.025$} & \boldmath{$\epsilon = 0.01$} \\
\midrule
2  & $7.58\mathrm{E}{+}01$ & $5.83\mathrm{E}{+}01$ & $1.00\mathrm{E}{+}02$ & $1.00\mathrm{E}{+}02$ & $1.00\mathrm{E}{+}02$ & $1.00\mathrm{E}{+}02$ \\
4  & $2.45\mathrm{E}{+}00$ & $6.40\mathrm{E}{+}00$ & $7.35\mathrm{E}{+}01$ & $8.80\mathrm{E}{+}01$ & $1.00\mathrm{E}{+}02$ & $1.00\mathrm{E}{+}02$ \\
8  & $2.97\mathrm{E}{-}05$ & $9.95\mathrm{E}{-}04$ & $2.34\mathrm{E}{+}00$ & $4.63\mathrm{E}{+}00$ & $6.13\mathrm{E}{+}01$ & $7.99\mathrm{E}{+}01$ \\
16 & $1.52\mathrm{E}{-}08$ & $3.64\mathrm{E}{-}08$ & $7.13\mathrm{E}{-}05$ & $2.03\mathrm{E}{-}04$ & $1.34\mathrm{E}{+}01$ & $3.33\mathrm{E}{+}01$ \\
24 & $2.42\mathrm{E}{-}09$ & $2.10\mathrm{E}{-}09$ & $5.00\mathrm{E}{-}08$ & $2.28\mathrm{E}{-}07$ & $8.06\mathrm{E}{-}05$ & $2.64\mathrm{E}{-}01$ \\
32 & $1.33\mathrm{E}{-}08$ & $4.48\mathrm{E}{-}08$ & $6.62\mathrm{E}{-}09$ & $2.63\mathrm{E}{-}08$ & $7.39\mathrm{E}{-}06$ & $5.08\mathrm{E}{-}04$ \\
\bottomrule
\end{tabular}
\end{table}

\begin{figure}[h!]
\centering
{\includegraphics[width=0.50\textwidth]{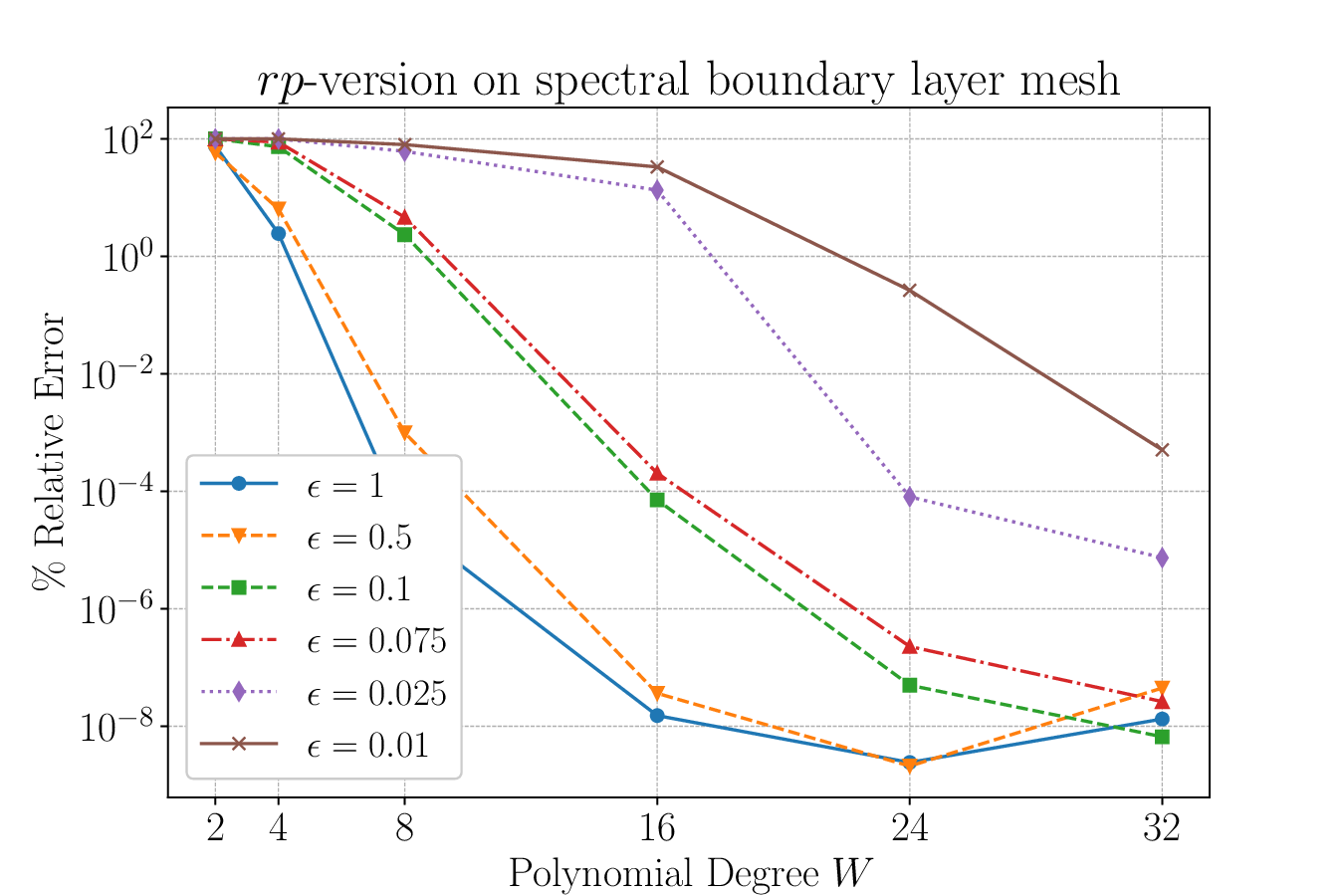}}
\caption{Performance of the $rp$-version on spectral boundary layer mesh (Example~\ref{example4})}
\label{fig:ex4}
\end{figure}

\newpage

\begin{example}[\textbf{Boundary layer only on the right edge}]\label{example5}
\end{example}

We now consider the boundary layer problem 

\begin{align*}
-\epsilon^2\Delta w+w=f\quad\mbox{in}\quad\Omega=(-1,1)^2,
\end{align*}
\begin{align*}
w=0\quad\mbox{on}\quad\partial\Omega=\Gamma.
\end{align*}

where $f$ is chosen such that 
\begin{align*}
w_\epsilon(x,y) = \sin(\pi y)\left(1 - e^{-(1-x)/\epsilon}\right),
\end{align*}

is the exact solution.This function exhibit sharp boundary layer along the right edge ($x = 1$),as shown in Figure~\ref{Soln_Ex5}.

\begin{figure}[h!]
\centering
\includegraphics[width=1.0\textwidth]{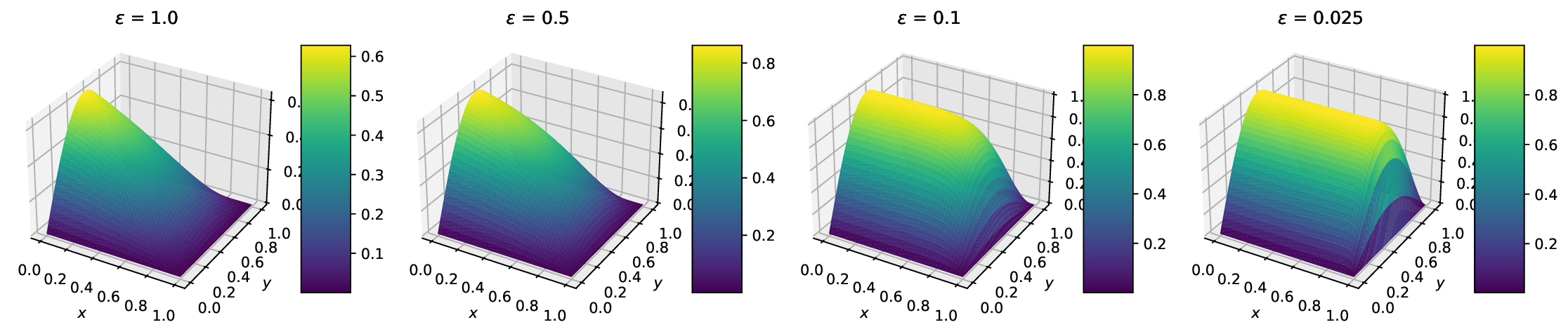}
\caption{Exact solution (Example 7.5).}
\label{Soln_Ex5}
\end{figure}

To resolve this layer, we employ a boundary layer mesh simiar to Example~\ref{example4}, near the right edge. The relative errors for different values of $\epsilon$ against $W$ on a semi $\log$-scale are reported in Table~\ref{ex5_blm}. For $\epsilon = 1$ and $\epsilon = 0.5$, rapid exponential convergence is observed, with the error dropping below $10^{-7}$ by polynomial degree $W = 16$.

As $\epsilon$ decreases, the solution have steep gradients near the right edge, so the initial convergence slows slightly. For example, at $\epsilon = 0.1$, the error remains around $10^{-3}$ for $W = 12$, but then decreases rapidly with increasing $W$, reaching $10^{-8}$ at $W = 28$. This behavior becomes even more pronounced for smaller values of $\epsilon$, such as $\epsilon = 0.01$, where the boundary layer is extremely sharp. Nevertheless, the method still achieves substantial error reduction (e.g., from $1.2 \times 10^1$ at $W=4$ to $5.56 \times 10^{-2}$ at $W=32$), which indicates that even when sharp boundary layers presents, exponential convergence is achieved at high polynomial degrees as seen in the convergence behaviour in Figure~\ref{fig:ex5}.

These results confirm the robustness and efficiency of the $rp$-version on boundary layer meshes and validate the theoretical estimate in Theorem~\ref{thm6.2}.

\begin{table}[h!]
\centering
\caption{Relative error for different values of $\epsilon$ using spectral boundary layer mesh (Example~7.5)}
\label{ex5_blm}
\begin{tabular}{c | c c c c c c}
\toprule
\textbf{ $W$ } & \boldmath{$\epsilon = 1$} & \boldmath{$\epsilon = 0.5$} & \boldmath{$\epsilon = 0.1$} & \boldmath{$\epsilon = 0.075$} & \boldmath{$\epsilon = 0.025$} & \boldmath{$\epsilon = 0.01$} \\
\midrule
2  & $9.24\mathrm{E}{+}17$ & $6.41\mathrm{E}{+}16$ & $4.42\mathrm{E}{+}16$ & $4.12\mathrm{E}{+}16$ & $3.38\mathrm{E}{+}16$ & $3.02\mathrm{E}{+}16$ \\

3  & $3.83\mathrm{E}{+}01$ & $2.58\mathrm{E}{+}01$ & $1.07\mathrm{E}{+}01$ & $1.06\mathrm{E}{+}01$ & $1.19\mathrm{E}{+}01$ & $1.27\mathrm{E}{+}01$ \\

4  & $2.89\mathrm{E}{+}01$ & $1.58\mathrm{E}{+}01$ & $7.49\mathrm{E}{+}00$ & $8.41\mathrm{E}{+}00$ & $1.10\mathrm{E}{+}01$ & $1.21\mathrm{E}{+}01$ \\
8  & $6.61\mathrm{E}{-}01$ & $4.96\mathrm{E}{-}01$ & $4.18\mathrm{E}{-}01$ & $1.06\mathrm{E}{+}00$ & $2.83\mathrm{E}{+}00$ & $3.33\mathrm{E}{+}00$ \\
16 & $1.71\mathrm{E}{-}07$ & $1.35\mathrm{E}{-}07$ & $8.10\mathrm{E}{-}07$ & $4.98\mathrm{E}{-}05$ & $4.43\mathrm{E}{-}01$ & $1.35\mathrm{E}{+}00$ \\
24 & $2.29\mathrm{E}{-}09$ & $1.85\mathrm{E}{-}09$ & $7.65\mathrm{E}{-}09$ & $1.12\mathrm{E}{-}08$ & $4.41\mathrm{E}{-}03$ & $4.77\mathrm{E}{-}01$ \\
32 & $1.01\mathrm{E}{-}08$ & $8.38\mathrm{E}{-}09$ & $2.02\mathrm{E}{-}08$ & $3.34\mathrm{E}{-}08$ & $2.61\mathrm{E}{-}06$ & $5.56\mathrm{E}{-}02$ \\
\bottomrule
\end{tabular}
\end{table}

\begin{figure}[h!]
\centering
{\includegraphics[width=0.50\textwidth]{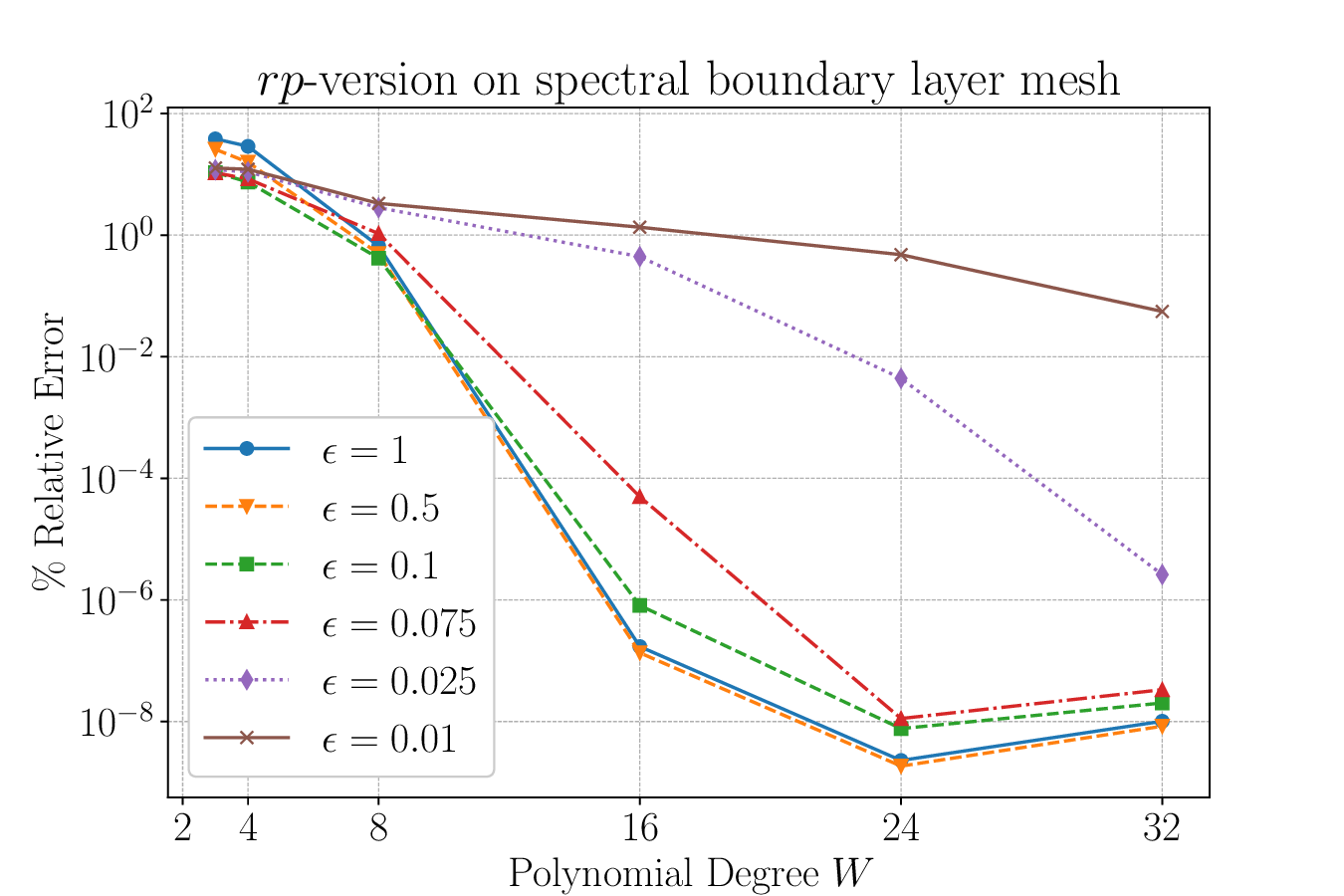}}
\caption{Performance of the $rp$-version on spectral boundary layer mesh (Example~\ref{example5})}
\label{fig:ex5}
\end{figure}

\section{Conclusion and future prospects}\label{sec8}

In this paper, we present a completely non-conforming formulation of the least-squares spectral element method for solving elliptic boundary layer problems in two dimensions on rectangular domains. To resolve boundary layers, we have used two types of mesh refinements, a $p$-version on a
fixed mesh and an $rp$-version on a layer adapted boundary layer mesh which consists of thin elements near the boundary layer and coarse
elements away from the layer. Stability estimates are derived using non-conforming spectral element functions. We have designed a numerical
scheme for both types of refinements which is based on minimising a least-squares functional in appropriate Sobolev norms. We construct robust preconditioners to control the condition number of the normal equations associated with the least-squares formulation involving parameter-weighted $H_{\epsilon}^2$-norm. The method is able to approximate boundary layers at a rate $O\left(\frac{\log W}{W^2}\right)$ for the $p$-version and at a rate $O({\epsilon}\alpha^{2W})$, uniformly in $\epsilon$, for the $rp$-version, where $0<\epsilon\leq1$ is the boundary layer parameter,
$\alpha<1$ is a constant and $W$ denotes the degree of the approximating polynomial. Numerical results are provided for model elliptic boundary layer problems at various scales of the boundary layer thickness, which confirm the efficiency and capability of
the method in resolving boundary layers on rectangular domains.

Numerical results validate both the theoretical error estimates and the predicted computational complexity of the method, which is also practically implementable on parallel computers due to its simple structure, involving matrix-vector multiplications at each iteration of the PCGM. In upcoming works, we plan to consider boundary layer problems on smooth domains with analytic boundary and to approximate boundary layer functions at a robust
exponentially rate independent of the boundary layer parameter.

\subsection*{Funding}
The authors received no financial support from any organization for the submitted work.
\subsection*{Data Availability Statement}
This article contains all data generated or analyzed during the current study.
\subsection*{Author Contributions}
Conceptualization, A.H., Sonia; methodology, A.H., Sonia; investigation, A.H., Sonia; validation, Sonia, A.K.; software, A.H. and Sonia; writing—original draft,
A.H. and Sonia; writing—review and editing, A.K., Sonia; supervision, A.K., A.H.; All authors have reviewed and approved the final version of the manuscript.
\subsection*{Declaration of AI use}
This article was prepared without the use of any AI-assisted technologies.
\subsection*{Conflict of Interest}
The authors have declared that there is no conflict of interest.

\appendix{}

\section{Technical Results}

\begin{LemmaA.1}
Let $\{\mathcal{F}_{u}\}\in S^{N,W}$. Then there exists $\{\mathcal{F}_{v}\}$ (where $\hat{v}_{l}=0$ on $\Gamma$ for all
$l=1,2,\cdots,N$) such that $v_{l}\in H^{2}(S)$ for $l=1,2,\cdots,N$ and $u+v\in H^{2}(\Omega)$. Moreover, there exists a
constant $C_\epsilon>0$ (depending upon $\epsilon$ and independent of $W$) and the estimate
\begin{align}\label{eqA.1}
\sum_{l=1}^{N}\|\hat{v}_{l}\|^{2}_{2,S}\leq C_\epsilon (\log W)^2 \left[\sum_{\Gamma_{l}^s\subset\Omega_{l}}\left(\|[u]\|^{2}_{0,\Gamma_{l}^s}
+\sum_{k=1}^{2}\|[u_{x_k}]\|^{2}_{\frac{1}{2},\Gamma_{l}^s}\right)\right],
\end{align}
holds.
\end{LemmaA.1}
\begin{proof}

Let $P_{j}, j=1,\cdots,4$, denote the vertices of $S$. A first correction $r_{l}(\xi,\eta)$ is applied at the vertices of
$S=(M_{l})^{-1}(\Omega_{l})$ so that
\begin{align}\label{eqA.2}
(u_{l}+r_{l})(P_j)&=\bar{u}(P_j)\;\;\mbox{and}\;\;((u_{l})_{x_k}+(r_{l})_{x_k})(P_j)=\bar{u}_{x_k}(P_j),
\end{align}
for $j=1,\cdots,4$ and $k=1,2$. Here $\bar{u}$ denotes the averages of $u^{W}$ at $P_j$ over all elements which have $P_{j}$ as a common vertex.

Now, we can always construct a polynomial $r_{l}(\xi,\eta)$ (as described in Section 3.5 of \cite{Tomar}) defined on $S$ such that
$r_{l}(P_j)=a_j, (r_{l})_{x_{1}}(P_j)=b_j$ and $(r_{l})_{x_{2}}(P_j)=c_j$ for $j=1,\cdots,4$. The values of $a_j, b_j$
and $c_j$ are calculated from equation (\ref{eqA.2}). Moreover, the polynomial $r_{l}(\xi,\eta)$ satisfies the estimate
\begin{align}\label{eqA.3}
\|r_{l}\|_{{2},S}^{2}\le C\sum_{j=1}^{4}\left(|a_{j}|^2+|b_{j}|^2+|c_{j}|^2\right).
\end{align}
By integrating equation (7.8) from~\cite{TW} and combining it with the analysis presented in Appendix C of~\cite{HUS1} yields

\begin{align}\label{eqA.4}
\sum_{l=1}^{N}||{r}_{l}||^{2}_{2,S}\leq C(\ln W)\left[\sum_{\Gamma^{s}_l\subset{\Omega_{l}}}\left(\|[u]\|^{2}_{0,\Gamma^{s}_l}+\sum_{k=1}^{2}\|[u_{x_k}]\|^{2}_{\frac{1}{2},\Gamma^{s}_l}\right)\right].
\end{align}
Define $y_{l}(\xi,\eta)=u_{l}(\xi,\eta)+r_{l}(\xi,\eta)$. Now we make a correction $\{s_{l}\}_{l}$ on the sides of
$S=(M_{l})^{-1}(\Omega_{l})$ such that if $t=(t_1,t_2)$ is a point on a side $I_j$ of $S=(M_{l})^{-1}(\Omega_{l})$, then
\begin{align}\label{eqA.5}
(y_{l}+s_{l})(t_1,t_2)&=\bar{y}(t_1,t_2),\;\;\mbox{and}\;\;((y_{l})_{x_k}+(s_{l})_{x_k})(t_1,t_2)=\bar{y}_{x_k}(t_1,t_2),\;\mbox{for}\;k=1,2.
\end{align}
Here, $\bar{y}(t_1,t_2)$ represents the average of $y_l$ evaluated at the point $(t_1, t_2)$, taken over all elements that share $I_j$ as one of their sides. Moreover, $s_{l}(t_1,t_2)=0$ if $(t_1,t_2)$ is a vertex of $S$. Furthermore, the polynomial $s_{l}(\xi,\eta)$
satisfies the estimate
\begin{align}\label{eqA.6}
||s_{l}||_{2,S}^{2}\le C\sum_{j=1}^{4}\left(\|s_{l}\|^2_{0,I_j}+\sum_{k=1}^{2}\|(s_{l})_{x_k}\|^2_{0,I_j}\right).
\end{align}
By Lemma 7.11 of~\cite{TW}, equation~(\ref{eqA.6}) gives
\begin{align}\label{eqA.7}
\sum_{l=1}^{N}\|{s}_{l}\|^{2}_{2,S}\leq C (\ln W)\left[\sum_{\Gamma^{s}_l\subset{\Omega_{l}}}\left(\|[u]\|^{2}_{0,\Gamma^{s}_l}+\sum_{k=1}^{2}\|[u_{x_k}]\|^{2}_{\frac{1}{2},\Gamma^{s}_l}\right)\right].
\end{align}
Define $\nu_{l}(\xi,\eta)=u_{l}(\xi,\eta)+r_{l}(\xi,\eta)+s_{l}(\xi,\eta)$. Now we make a correction $\{\delta_{l}\}_{l}$ on
the edges of the square $S=(M_{l})^{-1}(\Omega_{l})$ such that $\delta_{l}=0$ for all vertices of $S=(M_{l})^{-1}(\Omega_{l})$,
and $\delta_{l}\in H^{2}(S)$ for all $l$. Let
\begin{align}\label{eqA.8}
\delta_{l}|_{e_1}&=\mathcal{E}_1=\frac{1}{2}({\nu}_{l}-{\nu}_{m})|_{e_1},\nonumber\\
(\delta_{l})_{x_1}|_{e_1}&=\mathcal{F}_1=\frac{1}{2}({\nu}_{l}-{\nu}_{m})_{x_1}|_{e_1},\nonumber\\
(\delta_{l})_{x_2}|_{e_1}&=\mathcal{G}_1=\frac{1}{2}({\nu}_{l}-{\nu}_{m})_{x_2}|_{e_1}.
\end{align}
Here, $e_1$ denotes the edge common to $S=(M_{l})^{-1}(\Omega_{l})$ and $S=(M_{m})^{-1}(\Omega_{m})$. Similarly, we can define $\mathcal{E}_j$, $\mathcal{F}_j$, and $\mathcal{G}_j$ for edges with indices $j = 2, \ldots, 4$.

If ${e}_{j} \subseteq \Gamma \cup \Gamma_0$ for $j = 1, \ldots, 4$, we set $\mathcal{E}_j$, $\mathcal{F}_j$, and $\mathcal{G}_j$ to vanish identically. We now proceed to estimate the term

\begin{align*}
\int_{0}^{1}&\frac{ |(\partial_{\eta}\mathcal{E}_1)(1-t,-1)-\mathcal{G}_2(1,-1+t)|^2}{t} dt\nonumber\\
&\leq C\left\{\int_{0}^{1}\frac{ |(\partial_{\eta}\mathcal{E}_1)(-1+t,-1)|^2}{t} dt+ \int_{0}^{1}\frac{ |\mathcal{G}_2(-1,-1-t)|^2}{t} dt\right\}.
\end{align*}
Using Theorem 4.79 and Theorem 4.82 of~\cite{SCHW}, we have
\begin{align*}
\int_{0}^{1}&\frac{ |(\partial_{\eta}\mathcal{E}_1)(1-t,-1)-\mathcal{G}_2(1,-1+t)|^2}{t} dt\leq C (\ln W)^2(\|\partial_{\eta}\mathcal{E}_1\|_{1/2,\Gamma^{s}_l}+\|\mathcal{G}_2\|_{1/2,\Gamma_{l}^s}).
\end{align*}
In a similar manner, all terms presented in Theorem 5.2 of~\cite{BDM} can be estimated accordingly. Consequently, we obtain
\begin{align}\label{eqA.9}
\sum_{l=1}^{N}\|\delta_{l}\|^{2}_{2,S}\leq C (\log W)^2 \left[\sum_{\Gamma_{l}^s\subset{\Omega_{l}}}\left(||[u]||^{2}_{0,\Gamma_{l}^s}+\sum_{k=1}^{2}\|[u_{x_k}]\|^{2}_{\frac{1}{2},\Gamma_{l}^s}\right)\right].
\end{align}
Choosing $v_{l}(\xi,\eta)=r_{l}(\xi,\eta)+s_{l}(\xi,\eta)+\delta_{l}(\xi,\eta)$ and combining the results of equations (\ref{eqA.4}),
(\ref{eqA.7}) and (\ref{eqA.9}), the desired result follows.
\end{proof}
\begin{LemmaA.2}\label{lem2}
Let $w=u+v\in H^{2}(\Omega)$ and $\{\mathcal{F}_{u}\}\in S^{N,W}$ as described in Lemma A.1. Then the estimate
\begin{align}
\|w\|_{\frac{3}{2},\Gamma}^{2} \leq C(\log W)^2 & \left[\sum_{\Gamma_{l}^s\subseteq{\Omega}}^{2}\left(\|[u]\|^{2}_{0,\Gamma_{l}^s}+\sum_{k=1}^{2}\|[u_{x_k}]\|^{2}_{\frac{1}{2},\Gamma_{l}^s}\right)\right].
\end{align}
holds. Here, $\|v\|_{3/2,\Gamma_{l}^s}^{2}$ is defined as $\displaystyle{\inf_{q|_{\Gamma_{l}^s}=v}}\{\|q\|_{H^{2}(S)}\}$ as in~\cite{HUS1}.
\end{LemmaA.2}
\begin{proof}
From Lemma A.1 there exists $\{\mathcal{F}_{v}\}$ (where $\tilde{v}^{l}$, $l=1,2,\cdots,N$ is zero on $\Gamma$) for which $w=u+v\in H^{2}(\Omega)$.
Moreover, $w=u$ on $\Gamma$. By the Minkowski inequality, we obtain
\begin{align}\label{eqA.10}
\|w\|_{{\frac{3}{2}},\Gamma}^{2}&\leq C\sum_{\Gamma_{l}^s\subseteq \Gamma}\|u+v\|_{{\frac{3}{2}},\Gamma_{l}^s}^{2}\leq C\sum_{\Gamma_{l}^s\subseteq \Gamma}\left(\|u\|_{{\frac{3}{2}},\Gamma_{l}^s}^2+\|v\|_{{\frac{3}{2}},\Gamma_{l}^s}^2\right).
\end{align}
Using the result from Lemma A.1 in equation (\ref{eqA.10}), we get the result.
\end{proof}

\end{document}